\DeclareMathOperator*{\esssup}{ess\,sup}
\newcommand{\margnote}[1]{
\ifthenelse{\boolean{shownotes}}%
{\marginpar{\raggedright\tiny\texttt{#1}}}%
{}%
}
\newcommand{\hole}[1]{
\ifthenelse{\boolean{shownotes}}%
{\begin{center} \fbox{ \rule {.25cm}{0cm} \rule[-.1cm]{0cm}{.4cm}
\parbox{.85\textwidth}{\begin{center} \texttt{#1}\end{center}} \rule
{.25cm}{0cm}}\end{center}} {} }
 \numberwithin{equation}{section}
\newtheorem{theorem}{Theorem}[section]
\newtheorem{definition}{Definition}[section]
\newtheorem{corollary}{Corollary}[section]
\newtheorem{lemma}{Lemma}[section]
\newtheorem{proposition}{Proposition}[section]
\theoremstyle{definition}
\newtheorem{remark}{Remark}[section]
\newcommand{\R}{\mathbb R}
\newcommand{\N}{\mathbb N}
\newcommand{\bq}{\begin{equation}}
\newcommand{\eq}{\end{equation}}
\newcommand{\pa}{\partial}
\newcommand{\dt}{\partial_t}
\newcommand{\nn}{\notag}
\newcommand{\F}{\mathcal{F}}
\newcommand{\D}{\mathcal{D}}
\newcommand{\sigmam}{\sigma_{\textnormal{max}}}
\DeclareMathOperator*{\essinf}{ess\,inf}
\DeclareMathOperator*{\essosc}{ess\,osc}
\newcommand\gele{\mathrel{\hstretch{2}{%
  \stackanchor[1pt]{\scriptscriptstyle\geq}{\scriptscriptstyle\leq}}}}
\newcommand\lege{\mathrel{\hstretch{2}{%
  \stackanchor[1pt]{\scriptscriptstyle\leq}{\scriptscriptstyle\geq}}}}
\title[Regularity of Aggregation-Confinement-Diffusion with Saturation]{ Aggregation-Confinement-Diffusion Evolutions with Saturation: Regularity and Long-Time Asymptotics}
\author[Y. Alamri]{Yousef Alamri}
\address{Applied Mathematics and Computational Sciences (AMCS), Computer, Electrical and Mathematical Sciences and Engineering Division (CEMSE), King Abdullah University of Science and Technology (KAUST), Thuwal, 23955-6900, Kingdom of Saudi Arabia}{} 
\email{yousef.alamri@kaust.edu.sa}
\begin{document}

\allowdisplaybreaks

\subjclass[2020]{Primary 35B65, 35B40, 35K65. Secondary 35B45}

\keywords{Aggregation-Diffusion, Degenerate Mobility, Gradient Flow, Regularity.}

\begin{abstract}
We establish H\"older regularity for the weak solution to a degenerate diffusion equation in the presence of a local (drift) potential and nonlocal (interaction) term, posed in a bounded domain with no-flux boundary conditions. The degeneracy is due to saturation, i.e., it occurs when the solution reaches its maximal value. As a byproduct of the established regularity and the underlying dissipative structure of the evolution, we prove the uniform convergence of contractive solutions to a stationary state as $t\to \infty$.

\end{abstract}

\date{\today}

\maketitle



\section{Introduction and main results}
We study the regularity and long-time asymptotic of some density $\rho=\rho(x,t)$ governed by the equation 
\begin{equation} \label{PDE_gen}
    \partial_t \rho = \nabla \cdot \left( \rho \sigma(\rho) \nabla \dfrac{\delta \F}{\delta \rho} \right) \ \ \ \text{in} \ \Omega \times [0,\infty),
\end{equation}
with a nonnegative initial datum $\rho(x,0) = \rho_0$ bounded above by some maximal density $\rho_{\text{max}}$, and no-flux boundary conditions. The spatial domain $\Omega$ is some open subset of $\R^N$, $N \geq 1$, with a piecewise smooth boundary, the \emph{saturation} $s\mapsto \sigma(s)$ is a given continuous function vanishing at $\rho_{\text{max}}$, and $\delta \F/\delta \rho$ is the variational derivative of the free energy functional
\begin{equation} \label{F_func}
    \F[\rho] = \int_\Omega U(\rho) + \rho V + \dfrac{1}{2} \rho (W * \rho) dx.
\end{equation}
Here, $U: \R \to \R$ denotes the energy density that dictates the nature of the diffusion in the model (e.g., linear vs. nonlinear), while $V, W: \Omega \to \R$ are sufficiently smooth functions representing a confining potential and an interaction kernel, respectively. The convolution term $W*\rho$ is understood in a bounded domain sense. See \Cref{assumps} below for precise assumptions.

In the absence of saturation (i.e., $\sigma(\rho) \equiv 1$), the pair \eqref{PDE_gen}-\eqref{F_func} represents standard aggregation-diffusion models, with an additional external force/drift term due to the local potential $V$, that find applications in biology, physics, and population dynamics, see \cite{carrillo2019aggregation,bailo2024aggregation} for recent and comprehensive reviews of such models and their motivations. Without saturation, the evolution \eqref{PDE_gen} defines a gradient flow of the functional \eqref{F_func} on the space of probability measures on $\Omega$ with respect to the $L^2$-Wasserstein metric \cite{otto2001geometry}.  Particular instances of such models include the Fokker--Planck equation \cite{jordan1998variational,carrillo2001entropy} and McKean--Vlasov equation \cite{chayes2010mckean, carrillo2020long}, and equations involving porous medium-type diffusion \cite{chayes2013aggregation,carrillo2021phase}.

Physically faithful models require the incorporation of nonlinearity in the quantity of interest; indeed, the inclusion of the saturation function, which renders the mobility term $\rho \sigma(\rho)$ in \eqref{PDE_gen} nonlinear as opposed to the case without saturation, is an example. It is called for to take into account the volume-filling effect in chemotaxis models \cite{painter2002volume,burger2006keller}, prevention of overcrowding in population models \cite{hillen2001global, Zamponi2017}, or phase segregation in physics \cite{giacomin1997phase}, to name a few. While such nonlinearity is not manifested in the associated energy functional \eqref{F_func}, it enters into its dissipation rate over time, thus playing a crucial role in the steady state profile of the solution. Furthermore, the saturation function imposes natural global bounds on the solution, determined by the bounds of the initial datum. While a generalization of the $L^2$-Wasserstein metric has been proposed to accommodate the nonlinear mobility in \eqref{PDE_gen} \cite{dolbeault2009new, carrillo2010nonlinear,lisini2010class}, the lack of geodesic convexity with respect to this proposed metric in the presence of nontrivial potential and interaction term limits its applicability.

A series of works has studied the well-posedness of the solution to the one-dimensional version of \eqref{PDE_gen}-\eqref{F_func} via approximation by particle schemes. For instance, the nonlocal transport equation with saturation ($U \equiv V \equiv 0$) has been studied in \cite{di2019deterministic}, the local-nonlocal transport equation with generic nonlinear mobility in \cite{fagioli2022gradient}, and the aggregation-diffusion problem with one degeneracy and saturation in \cite{fagioli2018solutions}. More recently, in higher dimensions, the existence of a $C_0$-semigroup of $L^1$-contractions of certain weak solutions constructed by approximation, well-posedness of the minimizers of \eqref{F_func}, and long-time behaviors for solutions to \eqref{PDE_gen} with a sufficiently smooth and convex $U$ and no interaction term were established in \cite{carrillo2024aggregation}. See also the recent preprint \cite{gomez2025convergence} where an existence theory for the case $W \neq 0$ was developed. Numerical methods and illustrations of transient and long-time profiles for this class of problems are presented in \cite{bailo2023boundpreserving,carrillo2024structure,carrillo2024aggregation}.

The objective of this work is to establish the uniform continuity in space and time of weak solutions to \eqref{PDE_gen}-\eqref{F_func}. As an application of such uniform regularity estimates, we demonstrate the uniform convergence of contractive weak solutions to a stationary state in the limit $t \to \infty$. Whereas our focus will be on diffusions of Boltzmann-type, that is, $U(\rho):= \rho (\log \rho - 1)$, the techniques developed herein extend to other classes of diffusion (e.g., porous medium); see \Cref{Two-degen} below.

\subsection{Assumptions} \label{assumps}
\noindent
Upon computing the variational derivative of \eqref{F_func}, the problem of interest reads 
\begin{equation} \label{PDE}
\begin{cases}
    \partial_t \rho - \nabla \cdot \big( \sigma(\rho) \nabla \rho + \rho \sigma(\rho) \nabla (V +  W * \rho) \big) =0     & \text{in} \ \Omega_T,   \\
    \big( \sigma(\rho) \nabla \rho + \rho \sigma(\rho) \nabla (V +  W * \rho) \big) \cdot {\mathbf{n}} = 0 & \text{on} \ S_T, \\
    \rho(x,0) = \rho_0(x)& \text{in} \ \ \Omega,  
\end{cases}
\end{equation}
where $\Omega_T = \Omega \times (0,T]$ and $S_T = \pa \Omega \times [0,T]$, for some $0 < T < +\infty$. Here $\bf{n}$ denotes the outward unit normal on $\pa \Omega$. 
We denote the parabolic boundary of $\Omega_T$ by $\partial_p\Omega:= S_T \cup (\Omega \times \{0\})$.
The convolution term is understood in the bounded domain sense: $\nabla W * \rho(x) = \int_{\Omega} \nabla W(x-y) \rho(y) dy$. As stated earlier, we assume that the initial datum $\rho_0$ is nonnegative and bounded above by some maximal positive density $\rho_{\text{max}}$. We further assume the following:

\begin{itemize} [label={},labelsep=-0.03cm,leftmargin=*]   
     \item \textbf{ S1)}  $\sigma: [0,\rho_{\text{max}}] \to [0,\infty)$ is a continuous function such that $\sigma(s) >0$ when
     
     \ \ \ \ $s \in [0,\rho_{\text{max}})$ and  $\sigma(\rho_{\text{max}}) = 0$ with the fixed finite quantity
         \begin{align*}
         \sigma_{\rm{max}} := \max_{s \in \left[0, \rho_{\text{max}} \right]} \sigma(s). 
         \end{align*}
     \item  \textbf{ S2)} There exist constants $0 < c_0 \leq c_1$ such that
     \begin{equation*}
         c_0 \Theta(\rho_{\text{max}}-s) \leq  \sigma(s) \leq c_1 \Theta(\rho_{\text{max}}-s),
     \end{equation*}
     
     \ \ \ \ where $\Theta(s) = s^\beta$ for some constant $\beta > 0$.
      \item \textbf{ V)} \ The confinement potential $V \in  W^{1,\infty}(\Omega)$.
    \item \textbf{ W)} The interaction kernel $W \in  W^{1,\infty}(\Omega)$.
\end{itemize}
The constant $\sigmam$ depends on the choice of the saturation function. A typical choice for the saturation function is that of a monotone decreasing type, such as  
\begin{equation} \label{saturation-power}
    \sigma(\rho) = (\rho_{\text{max}} - \rho)^m 
\end{equation} 
for $m > 0$; nonetheless, the range of the constants $c_0$ and $c_1$ in $\bf{S2}$ allows for accommodating non-monotone saturations as long as they behave like a decreasing power near the saturation value, $\rho_{ \text{max}}$. The prototypical example of the confining and interaction potentials is $V(x) = W(x) = |x|^2/2$. The well-posedness and regularity of solutions to \eqref{PDE} with local and nonlocal potentials belonging to a lower integrability class is an interesting problem and is left for future investigation.

\subsection{Statement of results}

\subsubsection{H\"older Regularity}

Our main result is the interior H\"older regularity for weak solutions to \eqref{PDE}. The proof relies on the method of intrinsic scaling \cite{dibenedetto1993degenerate,urbano1930method}. Such a technique has been used to establish H\"older regularity for the porous medium equation \cite{di1979regularity}, parabolic $p$-Laplace-type equations \cite{dibenedetto1993degenerate,dibenedetto2012harnack}, and doubly nonlinear parabolic equations \cite{porzio1993holder}. We adapt the technique to accommodate a generic saturation function as well as local potential and nonlocal interaction terms.

For a compact subset $K \subset \Omega_T$, introduce the parabolic distance from $K$ to $\partial_p \Omega$:
\begin{equation*}
    {\rm{dist}} (K; \partial_p \Omega) = \inf_{\substack{(x,t) \in K \\ (y,s) \in \partial_p \Omega }} \left \{ |x-y| + |t-s|^{1/2} \right \}.
\end{equation*}
 
\begin{theorem} \label{inter-Holder}
    Let $\rho$ be a bounded weak solution for \eqref{PDE} in $\Omega_T$, then $\rho$ is locally H\"older continuous. More precisely, there exist constants $\Gamma>1$ and $\alpha \in (0,1)$ such that for every compact set $K \subset \Omega_T$, 
        \begin{equation*}
        \left|\rho(x_1,t_1) - \rho(x_2,t_2) \right| \leq \Gamma \left( 
        \dfrac{|x_1 - x_2| + |t_1 - t_2|^{1/2}}{\mathrm{dist} (K;\partial_p \Omega)}\right)^\alpha
    \end{equation*}
    for every pair $(x_1,t_1),(x_2,t_2) \in  K$. The constants $\alpha$ and $\Gamma$ are independent of $(x_1,t_1),(x_2,t_2)$ and can be determined a priori only in terms of the data of the problem.
\end{theorem}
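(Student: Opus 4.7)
The plan is to adapt DiBenedetto's method of intrinsic scaling to \eqref{PDE}, where the degeneracy is of porous-medium type but is localised at the saturation value $\rho_{\max}$ rather than at zero. Since \Cref{inter-Holder} is an interior estimate, it suffices to prove a reduction-of-oscillation statement at an arbitrary $(x_0,t_0)$ with $Q(R,R^2):=B_R(x_0)\times(t_0-R^2,t_0]\Subset\Omega_T$: namely, on a suitably chosen intrinsic subcylinder, $\essosc\rho$ decreases by a fixed factor $\eta\in(0,1)$. A routine iteration, keeping track of how the subcylinder shrinks geometrically in space--time, then converts this into the stated Hölder estimate, with $\alpha,\Gamma$ depending only on $N,\beta,c_0,c_1,\|V\|_{W^{1,\infty}},\|W\|_{W^{1,\infty}},\|\rho_0\|_\infty$, and $\mathrm{dist}(K;\partial_p\Omega)$.

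The first step is a pair of local estimates on truncations of $\rho$. Plugging test functions of the form $(\rho-k)_\pm^2\zeta^2$ into the weak formulation of \eqref{PDE} yields a Caccioppoli (energy) inequality
\begin{equation*}
\sup_t\int_{B_r}(\rho-k)_\pm^2\zeta^2\,dx+\iint\sigma(\rho)|\nabla(\rho-k)_\pm|^2\zeta^2\,dx\,dt\le C\,\mathcal{R}_\pm,
\end{equation*}
in which the residual $\mathcal{R}_\pm$ collects the contributions of the drift $\rho\sigma(\rho)\nabla V$ and the convolution $\rho\sigma(\rho)\nabla(W*\rho)$. Since $\sigma(\rho)\le\sigmam$, $\rho\le\rho_{\max}$, and $|\nabla(W*\rho)(x)|\le\|\nabla W\|_\infty\rho_{\max}|\Omega|$, both terms are pointwise bounded and absorbed through Young's inequality. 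A second, logarithmic estimate, obtained by testing with $\Psi'(\rho)\zeta^2$ for $\Psi(\rho)=\tfrac12\log^2\big(H/(H-(\rho-k)_+)\big)$ with $H$ slightly above the local supremum of $(\rho-k)_+$, controls the measure of the set where $\rho$ is close to saturation at each time level.

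The intrinsic cylinder is chosen to reflect the degeneracy at $\rho_{\max}$. Set $\omega=\essosc_{Q(R,R^2)}\rho$ and $\theta=(\omega/4)^{-\beta}$, and work in $Q(R,\theta R^2):=B_R(x_0)\times(t_0-\theta R^2,t_0]$. One may assume $\theta R^2\le R^2$; otherwise $\omega\lesssim R^{2/\beta}$ already gives a Hölder-type bound. The proof then splits into two alternatives. In the \emph{non-degenerate} regime $\esssup_{Q(R,\theta R^2)}\rho\le\rho_{\max}-\omega/4$, Assumption \textbf{S2} yields $\sigma(\rho)\ge c_0(\omega/4)^\beta$, so after the intrinsic rescaling the equation is uniformly parabolic and a De Giorgi iteration based on the energy inequality gives $\essosc\rho\le\eta\omega$ on a sub-cylinder $Q(\lambda R,\lambda^2\theta R^2)$ for some $\eta,\lambda\in(0,1)$ depending only on the data. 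In the complementary \emph{degenerate} regime $\esssup\rho>\rho_{\max}-\omega/4$, the logarithmic estimate is used to propagate the information that $\{\rho<\rho_{\max}-\omega/8\}$ has positive measure on some time slice to a positive portion of the sub-cylinder, thereby reducing matters to the first alternative and producing a reduction of the same type.

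The principal technical difficulty is the degenerate alternative: positivity of $\rho_{\max}-\rho$ must be propagated through the logarithmic estimate in such a way that the lower-order terms generated by $\nabla V$ and $\nabla(W*\rho)$ do not destroy the gain. This is achieved by taking the scale $R$ sufficiently small compared to a power of $\omega$, so that the drift contributions are strictly absorbable; the resulting thresholds are uniform over the class of saturation functions admitted by \textbf{S1}--\textbf{S2}. Iterating the reduction on a nested sequence of intrinsic cylinders, with $\theta_n=(\omega_n/4)^{-\beta}$ updated at each stage, yields $\omega_n\le\eta^n\omega_0$ on cylinders of parabolic size $\sim\lambda^n R$, and the standard parabolic bookkeeping of \cite{dibenedetto1993degenerate,urbano1930method} converts this geometric decay into the announced Hölder modulus in the parabolic distance.
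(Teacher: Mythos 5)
Your overall strategy coincides with the paper's: intrinsic cylinders $Q(\theta R^2,R)$ with $\theta\sim(\omega/4)^{-\beta}$, Caccioppoli and logarithmic estimates in which the drift and convolution terms are absorbed using $\mathbf{V}$, $\mathbf{W}$ and the $L^\infty$ bound, a two-alternative reduction of oscillation, and the standard iteration plus covering argument. However, your formulation of the alternatives contains a genuine gap rather than merely omitted routine detail. The dichotomy cannot be taken pointwise ($\esssup_{Q}\rho\lessgtr\rho_{\mathrm{max}}-\omega/4$); it must be measure-theoretic, as in \eqref{first-alt}--\eqref{second-alt}, with the threshold $\nu$ dictated by the fast geometric convergence lemma (Lemma~\ref{fast-conv-lemma}). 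The reason is that the De Giorgi iteration which upgrades measure information to a pointwise bound can only be launched once the measure of the bad set lies below that specific $\nu$, and, in the complementary case, the quantitative input for the logarithmic estimate is a time slice $\hat t$ on which the bad set occupies at most a definite fraction of $B_R$ --- a fraction extracted from the measure alternative by a Chebyshev-type argument (Lemma~\ref{lemma1}). Your degenerate alternative starts from ``$\{\rho<\rho_{\mathrm{max}}-\omega/8\}$ has positive measure on some time slice,'' which is not quantitative and cannot feed the logarithmic estimate, and it ends with ``reducing matters to the first alternative,'' but your first alternative is a sup bound and a positive-portion measure bound does not imply it. The missing link is DeGiorgi's isoperimetric inequality (Lemma~\ref{DegLemma}): after the logarithmic propagation in time one must iterate it over a decreasing sequence of levels to drive the measure of the bad set below the critical threshold before the pointwise De Giorgi iteration can be run. (In the paper this machinery is deployed on the non-degenerate side, to lift the infimum; the degenerate region near $\rho_{\mathrm{max}}$ is handled directly by the small-measure alternative \eqref{first-alt}.)

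A second, related imprecision: in your ``non-degenerate'' regime the claim that the rescaled equation is uniformly parabolic with constants independent of $\omega$ is false --- one only has $c_0(\omega/4)^{\beta}\le\sigma(\rho)\le c_1$ on the cylinder, so the ellipticity ratio is of order $\omega^{-\beta}$ and classical De Giorgi--Nash--Moser does not apply uniformly. This is precisely why the paper works with the truncation $\rho_\omega=\min\{\rho,1-\omega/4\}$ and with levels within $O(\omega)$ of the endpoints: on the sets where the truncated gradients are supported, $\sigma(\rho)$ is comparable to $\theta^{-1}$ from \emph{both} sides, which is what renders the rescaled energy inequalities homogeneous and the constants in \eqref{recur1}--\eqref{recur2} independent of $\omega$ and $R$. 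Any completion of your sketch would have to restore both of these ingredients.
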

    The theorem is a consequence of the oscillation decay established in \Cref{oscil-decay} and a standard covering argument \cite[Theorem 4.10]{urbano1930method}.
    \begin{remark} \label{bndry}
         As \eqref{PDE} has no-flux boundary condition, if $\partial \Omega$ is of class $C^{1,\lambda}$ for some $\lambda \in (0,1)$, standard adaptations and localizations of the energy estimates developed in the present work near the boundary enable extending the regularity up to $\overline{\Omega} \times (0, T]$; see \cite[Sections 1-(iii) and 13 of Ch. III]{dibenedetto1993degenerate}.
    \end{remark}

    
\begin{remark} \label{Two-degen}
    H\"older regularity of the solution still holds if the energy density  $U(s)= s (\log s-1)$ is replaced by a porous medium-type density $U(s)=s^{m}/(m-1)$ for $m>1$ as long as the saturation function is of a power type, i.e., \eqref{saturation-power}. In the case of the latter, the diffusion coefficient is the continuous function $s \mapsto ms^{m-1} \sigma(s)$ 
    that is degenerate at two values: $s=0$ and $s= \rho_{
    \mathrm{max}}$. H\"older continuity in such a regime without the confinement and interaction terms has been established in \cite{urbano2001holder} with energy estimates that accommodate the two degeneracies. Nonetheless, the task of assimilating the potential and interaction terms can be performed as presented in this work. 
\end{remark}

\subsubsection{Convergence to steady-state}
For evolutions with an energy-dissipative structure, the stabilization to equilibrum as $t \to \infty$ is of interest. In the recent work \cite{carrillo2024aggregation}, provided $W=0$, the authors have demonstrated the global-in-time existence of a particular weak solution for the pair \eqref{PDE_gen}-\eqref{F_func} that satisfies energy dissipation and $L^1$-contraction properties.
As an application of the demonstrated regularity herein, we prove the convergence of such solutions to a limit that satisfies the stationary problem in the uniform topology. In particular, it was shown in \cite[Section 2.4] {carrillo2024aggregation} that if $V \in C^2(\overline{\Omega})$ and $W=0$, then the solution converges in $L^1(\Omega)$ to a unique limiting state as $t \to \infty$. Here, we obtain the following time-asymptotic result:
\begin{theorem}  \label{conv-thm}
     Let $\rho$ be an $L^1$-contractive weak solution of \eqref{PDE} in the sense of \cite{carrillo2024aggregation} with $V \in C^2(\overline{\Omega})$, $W=0$, and an initial datum satisfying $0 \leq \rho_0(x) \leq \rho_{\mathrm{max}}$, a.e. in $\Omega$. Then, \begin{equation*}
        \lim_{t \to \infty} \| \rho(\cdot,t) - \rho_\infty \|_{L^\infty(\Omega)} = 0,
    \end{equation*}
    where $\rho_\infty$ is a weak stationary solution of \eqref{PDE}.
\end{theorem}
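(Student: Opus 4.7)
The plan is to interpolate the $L^1$ convergence established in \cite{carrillo2024aggregation} against the uniform spatial H\"older regularity furnished by \Cref{inter-Holder} (together with \Cref{bndry}), and then conclude via a standard Arzel\`a--Ascoli / subsequence argument.

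First, I would invoke \cite[Section 2.4]{carrillo2024aggregation}: under the stated hypotheses there exists a weak stationary solution $\rho_\infty$ of \eqref{PDE} with $0 \le \rho_\infty \le \rho_{\mathrm{max}}$ a.e., such that $\rho(\cdot,t) \to \rho_\infty$ in $L^1(\Omega)$ as $t \to \infty$. The existence theory in \cite{carrillo2024aggregation} also guarantees that the bound $0 \le \rho(x,t) \le \rho_{\mathrm{max}}$ propagates for all $t > 0$.

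Next, I would exploit the time-autonomy of \eqref{PDE} (valid since $W=0$ and $V$ is time-independent). For each $n \in \N$, set $\tilde\rho_n(x,s) := \rho(x, n+s)$ on $\Omega \times (0,2]$; each $\tilde\rho_n$ is itself a bounded weak solution of \eqref{PDE} with identical coefficients and identical uniform $L^\infty$ bound $\rho_{\mathrm{max}}$. Applying \Cref{inter-Holder} together with the boundary extension of \Cref{bndry} (under $\partial\Omega \in C^{1,\lambda}$) on the compact set $\overline{\Omega} \times [1,2]$ yields a H\"older exponent $\alpha \in (0,1)$ and a constant $\Gamma > 0$ depending only on the data of the problem --- in particular on $\rho_{\mathrm{max}}$, $\|V\|_{W^{1,\infty}}$, and the structural constants $c_0, c_1, \beta$ of $\sigma$ --- and \emph{independent} of $n$. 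Evaluating at $s=1$, the family $\{\rho(\cdot, n+1)\}_{n \in \N}$ is uniformly H\"older continuous on $\overline{\Omega}$, hence equicontinuous and uniformly bounded. Given any sequence $t_k \to \infty$, write $t_k = n_k + s_k$ with $n_k \in \N$ and $s_k \in [1,2]$; the same estimate shows $\{\rho(\cdot,t_k)\}$ is equicontinuous. By Arzel\`a--Ascoli a subsequence converges uniformly to some $\tilde\rho \in C(\overline{\Omega})$, which by the $L^1$ convergence of Step 1 must coincide with $\rho_\infty$ a.e.; continuity then upgrades this to pointwise equality, so $\rho_\infty$ admits a continuous representative. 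As every sequence $t_k \to \infty$ admits a subsequence converging uniformly to the same limit $\rho_\infty$, the whole family converges: $\|\rho(\cdot,t) - \rho_\infty\|_{L^\infty(\Omega)} \to 0$.

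The main obstacle is ensuring the uniformity in $n$ of the H\"older constants in the shifted estimate. This hinges on the fact that the intrinsic scaling machinery behind \Cref{inter-Holder} only invokes the $L^\infty$ bound $\rho_{\mathrm{max}}$, the Lipschitz bound on $V$, and the structural constants of $\sigma$; it does not depend on the initial datum beyond its $L^\infty$ norm. Combined with the autonomy of \eqref{PDE}, this is precisely what guarantees that the same H\"older estimate applies on every time window of fixed length, irrespective of how far into the future the window is located --- so the proof reduces to reading off from the statement of \Cref{inter-Holder} that the constants $\alpha$ and $\Gamma$ depend only on the data, not on the absolute time.
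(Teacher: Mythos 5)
Your proposal is correct, and its first half --- the time-shift-invariant H\"older estimate from \Cref{inter-Holder} (constants depending only on the data, hence uniform in the shift), equicontinuity of the translates, and Arzel\`a--Ascoli --- is exactly the paper's mechanism in \Cref{ConvToSS}. Where you diverge is in identifying the limit: you interpolate the compactness against the $L^1(\Omega)$ convergence to the unique limiting state already established in \cite[Section 2.4]{carrillo2024aggregation}, so every uniform subsequential limit is forced to coincide with $\rho_\infty$ and you inherit from the citation the fact that $\rho_\infty$ is a weak stationary solution. The paper instead establishes stationarity of the subsequential limit intrinsically: it uses the dissipation inequality $\int_{t_0}^{t_1}\|\partial_t\rho\|^2_{W^{-1,1}(\Omega)} \leq C(\F[\rho(\cdot,t_0)]-\F[\rho(\cdot,t_1)])$ to show the limit is time-independent, then the nonnegativity and $L^1$-lower semicontinuity of the dissipation functional $\D$ together with the monotone decay of $\F$ to conclude $\D[\rho_*]=0$, and only at the very end invokes the $L^1$-contraction to get uniqueness of the limit across subsequences. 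Your route is shorter and perfectly legitimate for the theorem as stated, but it leans entirely on the cited work for the stationarity of the $L^1$ limit; the paper's detour through the energy-dissipation structure is self-contained on that point and, notably, its \Cref{ConvToSS} is formulated for general $W\in C^2(\overline{\Omega})$ (where no $L^1$ convergence to a unique state is available), with $W=0$ entering only to secure uniqueness of the limit. If you wanted your argument to survive without assuming the citation already delivers a \emph{stationary} limit, you would need to graft in precisely that dissipation step.
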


The proof of \Cref{conv-thm} is given in \Cref{stationary-sec}.

\section{H\"older regularity} \label{regularity-sec}

We prove interior H\"older regularity for the weak solution to $\eqref{PDE}$. The challenge in this setting is that the principal part of the equation vanishes when the density $\rho$ reaches its maximal value $\rho_{\text{max}}$.  For simplicity in this section, we will consider the normalized solution  $\tilde{\rho} = \rho/\rho_\text{max}$ of \eqref{PDE} corresponding to the normalized initial datum $\tilde{\rho}_0 = \rho_0/ \rho_\text{max}$ such that, without loss of generality, we are restricted to the case $\rho_\text{max} = 1$. We suppress the tilde throughout.

A key idea in the regularity proof is to work with an $N$-dimensional space-time cylinder vertexed at some $(x_0,t_0) \in \Omega_T \subset \R^{N+1}$ whose dimensions are dynamically rescaled to reflect the degeneracy in the equation. The remaining reasoning follows an idea originally due to DeGiorgi \cite{de1957sulla} in the context of degenerate elliptic equations and later generalized to deal with degenerate parabolic equations by DiBenedetto \cite{di1986local}, culminating in the so-called \emph{intrinsic scaling} method \cite{dibenedetto1993degenerate,urbano1930method}: if the set where the equation degenerates can be confined within a small portion of the parabolic cylinder (cf. \eqref{first-alt}), then this set can be somehow controlled and the equation is thus essentially nondegenerate. On the other hand, if the set of degeneracy occupies a large portion of the parabolic cylinder (cf. \eqref{second-alt}), then such a set can be made comparable to its radius.

\subsection{Notion of weak solution}

We begin by defining the notion of a weak solution, which we shall consider for the regularity proof.
\begin{definition}\label{weak_sol_def} A weak solution to $\eqref{PDE}$ is a measurable function $\rho= \rho(x,t)$ such that
\begin{align*}
    &\rho \in C(0,T; L^2(\Omega)) \cap L^\infty(\Omega_T),\\
    &\Sigma(\rho): = \int^\rho_0 \sigma(s)ds \in L^2(0,T;H^1(\Omega)),
\end{align*}
 and for any $\varphi = \varphi(x,t) \in H^1(\Omega_T)$ satisfying $\varphi(\cdot,T)=0$,
  \begin{equation} \label{weak_form}
    \int_{\Omega}  \rho_0 \varphi(x,0) dx    + \iint_{\Omega_T} \rho \pa_t \varphi dx dt -\iint_{\Omega_T}   \Big\{  \sigma(\rho) \nabla \rho + \rho  \sigma(\rho) \nabla \left(V  + W * \rho \right) \Big\} \cdot \nabla \varphi dx dt = 0.
\end{equation}
\end{definition}

Energy estimates for the weak solutions are fundamental ingredients for the intrinsic scaling approach. To construct such estimates, test functions that depend on the solution itself are needed. A difficulty arises as weak solutions possess insufficient time regularity. This can be overcome by working with a time-averaged (alternatively, -mollified via convolution) version of the solution. Concretely, given a function $v \in L^1(\Omega_T)$ and $0 < h < T$, we consider the Steklov average of $v$ 
\begin{equation*}  \label{stek}
v_h(\cdot,t) := \begin{cases}  \dfrac{1}{h} \displaystyle \int_t^{t+h} v(\cdot, s)  ds,& 0 < t < T-h,\\ 
 0, & t > T - h,
 \end{cases}
\end{equation*}
for $0<t<T$ whose time derivative is  
\begin{equation*}
    \dt v_h = \dfrac{v(x,t+h) - v(x,t)}{h}.  
\end{equation*}

 Consult \cite{dibenedetto1993degenerate,wu2001nonlinear} for standard properties of the Steklov average. Of special use herein, we will employ  
 \cite[Lemma 2.1.2]{wu2001nonlinear} to show that $v_h \to v$ as $h \to 0$ in an appropriate topology. Putting this to use, we can introduce the following equivalent and more convenient definition of the weak solution given in \Cref{weak_sol_def}.
\begin{definition}
    For every compact subset $\Tilde{\Omega} \subset \Omega$ and $t \in (0,T-h]$, a weak solution of $\eqref{PDE}$ is a function $\rho_h$ satisfying,
\begin{equation}\label{weak_form_stek}
    \int_{\Tilde{\Omega}  \times \{t\}} \Big \{ \pa_t \rho_h \phi + (\sigma(\rho) \nabla \rho)_h \cdot \nabla \phi +  (\rho \sigma(\rho) )_h  \nabla (V +  W * \rho_h) \cdot \nabla \phi  \Big \}dx = 0.
    \end{equation}
     for any $\phi \in H^1_0(\Tilde{\Omega})$.
\end{definition}

\subsection{Intrinsic geometry} \label{intrin-geom}
For $x_0 \in \R^N$, define the $N$-dimensional ball centered at $x_0$ with radius $r>0$ as 
\begin{equation} \label{ball}
    B_r(x_0) := \left\{x \in \R^N:  |x-x_{0}| < r \right\},
\end{equation}
 and the parabolic cylinder with a vertex at $(x_0,t_0) \in \R^{N+1}$, radius $r$, and height $t >0$ as 
\begin{equation} \label{cyl}
    (x_0,t_0) + Q(t, r) := B_r(x_0) \times (t_0-t,t_0).
\end{equation}
with its Lebesgue measure  
\begin{equation}\label{cyl-measure}
|(x_0,t_0) + Q(t,r)| = t |B_r(x_0)|, 
\end{equation}
where $|B_r(x_0)| = C(N) \cdot r^N$.
Up to a translation, we may assume that $(x_0,t_0)$ coincides
with the origin and work with the notations $B_r$ and $Q(t,r)$ in place of \eqref{ball} and \eqref{cyl}, respectively. 
\noindent
For $0 < R < 1$ and sufficiently small $\epsilon>0$ such that $Q(R^{2-\epsilon},R) \subset \Omega_T$, we define the following quantities:
$$\mu_+ := \esssup_{Q(R^{2-\epsilon},R)} \rho, \ \ \ \mu_- := \essinf_{Q(R^{2-\epsilon},R)} \rho, \ \ \ \omega := \essosc_{Q(R^{2-\epsilon},R)} \rho = \mu_+ - \mu_-.$$

We also define the \emph{intrinsically scaled} parabolic cylinder
\begin{equation} \label{scl}
    Q(\theta_1 R^2,R) \subset \Omega_T \ \ \ \ \text{with} \ \ \    \dfrac{1}{\theta_1}   := c_0 \Theta\left(\dfrac{\omega}{4}\right),
\end{equation}
where $c_0$ and $\Theta(\cdot)$ are specified by assumption $\mathbf{S2}$. It is noteworthy to observe that the time length of the cylinder is scaled by the inverse of the lower bound on the saturation function; thus, the degeneracy is absorbed in the dimensions of the cylinder. Furthermore, the rescaled cylinder reduces to the standard parabolic cylinder associated with the heat equation if no degeneracy is present in the equation, i.e., if we fix $\beta=0$ and $c_0=c_1=1$ in assumption $\bf{S2}$, see \Cref{fig:Cyl} for a schematic.
\begin{figure}[htp]
\includegraphics[width=10cm]{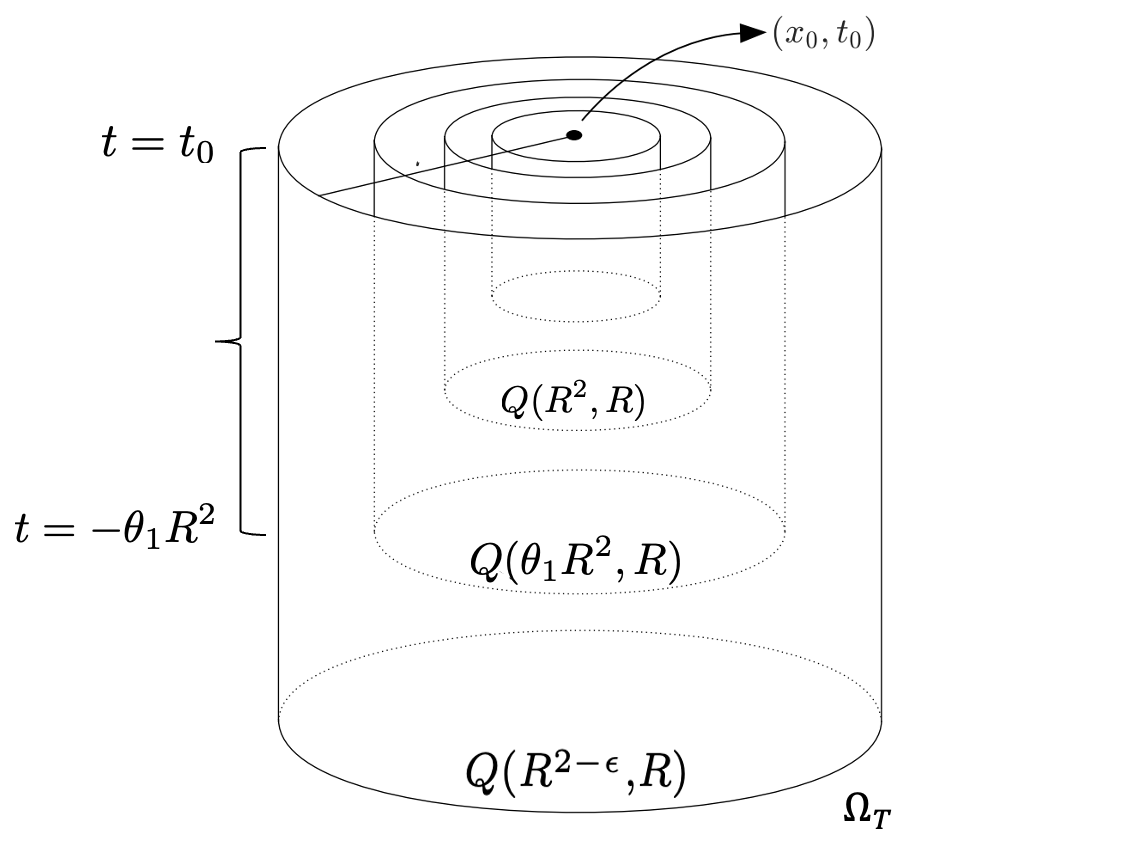}
     \caption{ The standard parabolic cylinder $Q(R^2,R)$ contained in the rescaled cylinder $Q(\theta_1 R^2,R)$.}
    \label{fig:Cyl}
\end{figure}

As $0 \leq \rho \leq 1$ by assumption, we may fix $\mu^- := 0$ and $\mu^+ :=1$ throughout for convenience. We enforce that
$$Q(\theta_1 R^2,R) \subset Q(R^{2-\epsilon},R) \subset \Omega_T,$$
entailing
\begin{equation} \label{assump1}
    R^{\epsilon} \leq  \dfrac{1}{\theta_1} \leq c_0 ,
\end{equation}
and consequently
\begin{equation} \label{assump2}
   \essosc_{ Q(\theta_1 R^2,R)} \rho\leq \omega
\end{equation}
which is a crucial assumption for the iteration process leading to the oscillation decay. 

\begin{remark}  \label{WLOG}
If \eqref{assump1} does not hold, then it would hold that $\theta_1^{-1} < R^\epsilon$, which implies that the oscillation $\omega$ tends to zero as the radius $R$ does. The desired oscillation decay in such a situation is obvious.
\end{remark}


\begin{remark} \label{LOT}
    We will frequently encounter quantities of the form $C_i R^{\hat{b}_iN\kappa}\omega^{-b_i}$ for a fixed $\kappa >1$ and some constants $C_i,b_j>1$, $\hat{b}_j>0$ that can be determined a priori from the data of the problem. We may enforce, without loss of generality, that 
    \begin{equation*}
        C_i R^{\hat{b}_iN\kappa}\omega^{-b_i} \leq 1.
    \end{equation*}
    If not, we have that $\omega < CR^{\epsilon_0}$, where $C:= \left( \max_{i} C_i\right)^{1/\min_{j}b_j}$ and $\epsilon_0 :=  \min_j \hat{b}_j N \kappa/\max_j b_j  $, in which case the oscillation decay is obvious.
\end{remark}
\begin{remark}
    Future calculations reveal the explicit values of the constants in the previous remark: $\kappa = 1+4/N$, $\min_j \hat{b}_j = 2/(N+4)$,  $\min_j b_j = \beta+1$,  $\max_j b_j = 2(\beta+1)$, and $\max_i C_i = \max\{c_0^{-2},1\} 2^{4\beta+2s_*}$ where $s_* \geq 4$ is some fixed finite number that depends on the data.  
\end{remark}

\subsection{Notations and auxiliary results} \label{conventions} Given $S \subset \mathbb{R}^N$ measurable, we write $|S|$ to denote the Lebesgue
measure of $S$ in $\mathbb{R}^N$. We denote by $C$ a generic positive constant or $C(X)$ to emphasize the dependence of $C$ on $X$.  

\noindent
For a function $v: \Omega \to \mathbb{R}$ and two real numbers $k_1 < k_0$, we will adopt the following notation
\begin{equation*}
    [k_1 < v < k_0] := \{ x \in \Omega : k_1 < v(x) < k_0 \},
\end{equation*}
and the usual conventions 
$$v_+ = \max\{v,0\}, \ \ \ \ \ v_- := \max\{-v,0\}.$$
Given constants $a,b,c$, with $a> c> 0$, we will employ the following nonnegative logarithmic function:
\begin{align} \label{logfunc}
    \psi^\pm_{\{a,b,c\}}(s) &:= \left( \log \dfrac{a}{(a+c) -(s-b)_{\pm}} \right)_+ \nn \\
    &= \begin{cases}
        \log \dfrac{a}{(a+c) \pm (b-s)} & \text{if } b \pm c \lessgtr s \lessgtr b \pm (a+c), \\
        0 & \text{if } s \lege b \pm c,  
    \end{cases}
\end{align}
whose first derivative is 
\begin{align}
    \left(\psi^\pm_{\{a,b,c\}}\right)'(s) &= \begin{cases}
         \dfrac{1}{(b-s) \pm (a+c)} & \text{if } b \pm c \lessgtr s \lessgtr b \pm (a+c), \\
        0 & \text{if } s \lessgtr b \pm c,  \end{cases}
\end{align}
with $(\psi^\pm)' \gele 0$ for which the following (in)equalities hold \cite{dibenedetto1993degenerate}:
\begin{subequations} \label{logfuncEstim}
    \begin{align}
    \psi^\pm &\leq \log \dfrac{a}{c}, \\
    (\psi^\pm)' &\leq  \dfrac{1}{c}, \\
    (\psi^\pm)'' &= [(\psi^\pm)']^2.
    \end{align}
\end{subequations}
For $k \in \mathbb{R}$, $t, r>0$, and a function $\rho=\rho(x,t)$, we will work with truncated versions of $\rho$, that is $(\rho-k)_\pm$, satisfying
\begin{equation} \label{assump3}
    H_{\rho,k}^\pm := \esssup_{Q(t,r)} \ (\rho-k)_\pm \leq \delta_0,
\end{equation}
for some determined $\delta_0 > 0$; and the sets
\begin{equation}\label{A+-}
            A_{k,r}^\pm(s) :=\{ x \in B_{r}(x_0): \pm \rho(x,s) > \pm k   \}.
\end{equation}
We recall the parabolic Sobolev space 
\begin{equation*}
    V^2(\Omega_T) := L^\infty(0,T; L^2(\Omega)) \cap L^2(0,T; H^{1}(\Omega)),
\end{equation*}
endowed with the norm 
\begin{equation} \label{Vnorm}
    \|v\|^2_{V^2(\Omega_T)} := \esssup_{0 < t < T} \|v\|^2_{L^2(\Omega)} + \|\nabla v\|^2_{L^2(\Omega_T)}.
\end{equation}
In addition to the constant $\sigmam$ given in assumption $\bf{S1}$, we fix the following frequent quantity
\begin{equation} \label{BigLam}
    \Lambda := 2\cdot \max\{\|\nabla V\|_{L^\infty(\Omega)},\|\nabla W\|_{L^\infty(\Omega)} \},
\end{equation}
which is finite by assumptions $\bf{V}$ and $\bf{W}$.
We employ the following convergence result quoted from \cite[Sec. 4.2  of Ch. I]{dibenedetto1993degenerate}.

\begin{lemma} [Fast geometric convergence] \label{fast-conv-lemma}
    Let $\{Y_n\}$ and $\{Z_n \}$, for $n = 0,1,2,\dots$, be sequences of positive numbers satisfying the recursive inequalities 
    \begin{equation*}
        \begin{cases}
            Y_{n+1} \leq Cb^n \left(Y_{n}^{1+\upsilon} + Y_n^\upsilon Z_{n}^{1+ \kappa}  \right) , \\
            Z_{n+1} \leq Cb^n \left(Y_{n} + Z_{n}^{1+ \kappa} \right),
        \end{cases}
    \end{equation*}
    where $C,b>1$ and $\kappa, \upsilon>0$ are given numbers. If
    \begin{equation*}
        Y_0 + Z_0^{1+\kappa} \leq (2C)^{-\tfrac{1+\kappa}{d}} b^{-\tfrac{1+\kappa}{d^2}}, \ \ \ \ \text{where} \ \ \ d := \min\{\kappa,\upsilon\},
    \end{equation*}
    then $\{Y_n\}$ and $\{Z_n \}$ tend to zero as $n \to \infty$.
\end{lemma}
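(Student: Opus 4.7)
The plan is to decouple the two inequalities into a single scalar recursion on the combined quantity $X_n := Y_n + Z_n^{1+\kappa}$ and then apply the classical one-sequence fast iteration argument. First I would rewrite the given recursions in terms of $X_n$: factoring the first yields $Y_{n+1} \leq Cb^n Y_n^{\upsilon}(Y_n + Z_n^{1+\kappa}) \leq Cb^n X_n^{1+\upsilon}$, while raising the second to the power $1+\kappa$ gives $Z_{n+1}^{1+\kappa} \leq (Cb^n)^{1+\kappa} X_n^{1+\kappa}$. Summing these and invoking $d = \min\{\kappa,\upsilon\}$ together with the a priori bound $X_n \leq 1$ (to be maintained inductively), I obtain the clean scalar recursion
\begin{equation*}
X_{n+1} \leq \widetilde{C}\,\widetilde{b}^{\,n} X_n^{1+d}, \qquad \widetilde{C} := 2C^{1+\kappa}, \quad \widetilde{b} := b^{1+\kappa},
\end{equation*}
where I absorb $C, b \geq 1$ without loss of generality.

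Next, I would run the standard geometric ansatz $X_n \leq X_0\mu^{n}$ with $\mu := \widetilde{b}^{-1/d} < 1$. The inductive step $X_{n+1} \leq X_0 \mu^{n+1}$ is equivalent, after inserting the ansatz into the recursion, to
\begin{equation*}
\widetilde{C}\,X_0^{d}\,(\widetilde{b}\mu^{d})^{n} \leq \mu,
\end{equation*}
and the choice of $\mu$ forces $\widetilde{b}\mu^d = 1$, leaving the single smallness condition $\widetilde{C}X_0^{d} \leq \mu$. Unwinding the definitions gives $X_0 \leq \widetilde{C}^{-1/d}\widetilde{b}^{-1/d^2} = 2^{-1/d} C^{-(1+\kappa)/d} b^{-(1+\kappa)/d^2}$, which is implied by the hypothesis $Y_0 + Z_0^{1+\kappa} \leq (2C)^{-(1+\kappa)/d}b^{-(1+\kappa)/d^2}$ since $(1+\kappa)/d \geq 1/d$. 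Consequently $X_n \to 0$ geometrically, and since $Y_n, Z_n^{1+\kappa} \leq X_n$, both sequences vanish in the limit; the provisional bound $X_n \leq 1$ is propagated automatically because $X_0 < 1$ and $\mu < 1$.

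The main obstacle is not conceptual but combinatorial: essentially all the work lies in bookkeeping the exponents through the amplifying factor $(Cb^n)^{1+\kappa}$ coming from the $Z_n$-equation, so that the smallness threshold on $X_0$ matches the stated hypothesis with the correct powers of $d$ and $1+\kappa$. The key structural observation that makes the reduction work is that the right-hand side of the $Y_{n+1}$ inequality factors as $Y_n^{\upsilon}\cdot X_n$, which converts the mixed term $Y_n^{\upsilon} Z_n^{1+\kappa}$ into the desired power $X_n^{1+\upsilon}$; once this is spotted, the rest is the DeGiorgi--DiBenedetto one-variable iteration applied to $X_n$.
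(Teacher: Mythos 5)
Your argument is correct. Note that the paper does not prove this lemma at all --- it is quoted verbatim from DiBenedetto's book \cite[Sec.~4.2 of Ch.~I]{dibenedetto1993degenerate} --- so there is no in-paper proof to compare against; what you have supplied is a clean, self-contained proof of the cited result. The reduction to the single quantity $X_n = Y_n + Z_n^{1+\kappa}$ is exactly the right move: the factorization $Y_n^{\upsilon}(Y_n+Z_n^{1+\kappa})$ gives $Y_{n+1}\le Cb^nX_n^{1+\upsilon}$, raising the second inequality to the power $1+\kappa$ gives $Z_{n+1}^{1+\kappa}\le (Cb^n)^{1+\kappa}X_n^{1+\kappa}$, and since $Cb^n>1$ the sum is bounded by $2C^{1+\kappa}(b^{1+\kappa})^nX_n^{1+d}$ once $X_n\le 1$. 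The one-variable iteration then closes, and your bookkeeping of the threshold is right: the hypothesis forces $X_0\le (2C)^{-(1+\kappa)/d}b^{-(1+\kappa)/d^2}\le 2^{-1/d}C^{-(1+\kappa)/d}b^{-(1+\kappa)/d^2}=\widetilde{C}^{-1/d}\widetilde{b}^{-1/d^2}$, which is precisely what the induction $X_n\le X_0\widetilde{b}^{-n/d}$ requires, and it also guarantees $X_0<1$ so the provisional bound propagates. No gaps.
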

Finally, we recall DeGiorgi's lemma (cf. \cite [Lemma 2.2 of Ch. I]{dibenedetto1993degenerate}).

\begin{lemma}  [DeGiorgi's isoperimetric inequality] \label{DegLemma}Let $v \in W^{1,1}(B_r(x_0))$, where $B_r(x_0) \subset \Omega \subset \R^N$. Let $k_1<k_0$ be two real numbers. Then, there exists a constant $C=C(N)>0$ such that
    \begin{equation*} 
   (k_0-k_1)|v>k_0| \leq C(N) \dfrac{r^{N+1}}{|v<k_1|} \int_{[k_1 < \rho < k_0]} |\nabla v| dx.
\end{equation*}
\end{lemma}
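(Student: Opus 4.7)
This is the classical DeGiorgi isoperimetric inequality, and my plan is to prove it by comparing values of $v$ on the two sets $A := [v > k_0]$ and $B := [v < k_1]$ along straight segments joining them, then integrating the resulting pointwise bound twice. By the density of $C^\infty(\overline{B_r})$ in $W^{1,1}(B_r)$, one may first reduce to smooth $v$ and extend to the general case by approximation; alternatively, the ACL characterization of Sobolev functions gives the needed line-integral identity on almost every segment directly.

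The first step is pointwise: for $x \in A$ and $y \in B$, setting $\omega_{xy} = (y-x)/|y-x|$, the fundamental theorem of calculus yields
\begin{equation*}
    k_0 - k_1 < v(x) - v(y) \leq \int_0^{|x-y|} |\nabla v(x + s\omega_{xy})|\, ds.
\end{equation*}
Integrating over $y \in B \subset B_r$ and passing to polar coordinates centered at $x$ (so that $|x-y| = \rho \leq 2r$), Fubini swaps the two radial integrals and gives
\begin{equation*}
    (k_0 - k_1)|B| \leq \int_{\mathbb{S}^{N-1}} \int_0^{2r} \rho^{N-1} \int_0^\rho |\nabla v(x+s\omega)|\, ds\, d\rho\, d\omega \leq \tfrac{(2r)^N}{N} \int_{B_{2r}(x)} \frac{|\nabla v(z)|}{|z-x|^{N-1}}\, dz,
\end{equation*}
where in the last step the identity $ds\, d\omega = dz/|z-x|^{N-1}$ restores Cartesian coordinates. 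Because $\nabla v = 0$ almost everywhere on each level set $\{v = k_j\}$ (Stampacchia's theorem), the rightmost integrand is supported in $[k_1 < v < k_0]$.

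The second step is to integrate over $x \in A$ and swap the order to place the $z$-integral outside. For fixed $z \in [k_1 < v < k_0] \subset B_r$ one has $A \subset B_{2r}(z)$, so
\begin{equation*}
    \int_A \frac{dx}{|z-x|^{N-1}} \leq \int_{B_{2r}(z)} \frac{dx}{|z-x|^{N-1}} = 2r\,|\mathbb{S}^{N-1}|.
\end{equation*}
Putting the pieces together and dividing by $|B| = |[v<k_1]|$ delivers the inequality with $C(N) = 2^{N+1}|\mathbb{S}^{N-1}|/N$. The main obstacle I would expect is not any single estimate (each is elementary) but the rigorous justification of the segment FTC for $v \in W^{1,1}$: this is a standard application of Fubini in rotated coordinates together with the ACL property, and once it is granted the rest of the proof is a bookkeeping exercise in polar integration and Fubini.
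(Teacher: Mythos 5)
The paper does not prove this lemma at all: it is quoted verbatim from DiBenedetto's book (Lemma 2.2 of Ch.~I), so there is no in-paper argument to compare against. Your proposal reconstructs exactly the classical proof from that reference --- the two-fold integration of the fundamental theorem of calculus along segments joining $[v>k_0]$ to $[v<k_1]$, the polar-coordinate rewriting into a Riesz-potential bound, and the elementary estimate $\int_{B_{2r}(z)}|x-z|^{1-N}\,dx = 2r\,|\mathbb{S}^{N-1}|$ --- and the bookkeeping (including the constant $C(N)=2^{N+1}|\mathbb{S}^{N-1}|/N$) is correct.

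There is, however, one genuine gap: the claim that ``the rightmost integrand is supported in $[k_1<v<k_0]$'' does not follow from Stampacchia's theorem. That theorem gives $\nabla v=0$ a.e.\ only on the \emph{level sets} $\{v=k_0\}$ and $\{v=k_1\}$; it says nothing about the open regions $\{v>k_0\}$ and $\{v<k_1\}$, where $\nabla v$ is generically nonzero and through which the segment from $x$ to $y$ may well pass. As written, your argument only yields the weaker bound with $\int_{B_r}|\nabla v|$ on the right-hand side. The standard repair is to run the entire argument on the truncation $w:=\min\{\max\{v,k_1\},k_0\}$: then $w(x)-w(y)=k_0-k_1$ exactly for $x\in[v>k_0]$, $y\in[v<k_1]$, and $\nabla w=\nabla v\,\chi_{[k_1<v<k_0]}$ a.e.\ (this is where Stampacchia is actually used), so the localization to the intermediate set is automatic and every subsequent estimate goes through unchanged. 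A second, smaller point worth a sentence in a full write-up: if you justify the segment FTC by smooth approximation rather than by the ACL property, note that the sets $[v_j>k_0]$, $[v_j<k_1]$ and $[k_1<v_j<k_0]$ are not stable under $W^{1,1}$ convergence when $v$ has level sets of positive measure, so one should either perturb the levels or work with the ACL characterization directly, as you suggest as an alternative.
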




\subsection{Energy estimates}
We shall consider two complementary cases. Given $\nu \in (0,1)$, to be determined depending only on the data, the weak solution $\rho$ of \eqref{PDE} satisfies either 
\begin{equation}
\label{first-alt}
     \left|\left\{(x,t) \in Q(\theta_1 R^2,R): \rho(x,t) > 1-\dfrac{\omega}{2}\right\}\right|   \leq \nu |Q(\theta_1 R^2,R)|,
\end{equation}
or  
\begin{equation} \label{second-alt}
     \left|\left\{(x,t) \in Q(\theta_1 R^2,R): \rho(x,t) < \dfrac{\omega}{2}\right\}\right|   < (1-\nu)|Q(\theta_1 R^2,R)|.
\end{equation}

The aim is to show that in either case, decreasing the size of the parabolic cylinder of $Q(\theta_1 R^2, R)$ is accompanied by a reduction in the oscillation of the solution. We stress that such a reduction is controlled by a factor that does \emph{not} depend on the oscillation $\omega$ nor the radius $R$ of the parabolic cylinder. 

As mentioned earlier, the intrinsic scaling approach relies on crafting particular energy estimates employed to reach desired conclusions when either \eqref{first-alt} or \eqref{second-alt} holds. In the following proposition, we construct a Caccioppoli-type estimate for the weak solution.

 \begin{proposition} \label{energy-estim-prop}
    Let $0 \leq \rho \leq 1$ be a weak solution for \eqref{PDE} in $\Omega_T$. There exists constants $C(\cdot) >0$, depending only on the data, such that for every parabolic cylinder $Q(\tau,r) \subset \Omega_T$, every $k \in \mathbb{R}$, and every nonnegative, piecewise, smooth cutoff function $\zeta$ in $Q(\tau,r)$, there holds
\begin{align}\label{energy-estim}
            \esssup_{-\tau < t < 0} \int_{B_r \times \{t\}} &(\rho - k)_\pm^2 \zeta^2 dx +  \iint\limits_{Q(\tau,r)} 
         \sigma(\rho) |\nabla (\rho - k)_\pm|^2 dx dt \nn \\
         &\leq \int_{B_r \times \{-\tau\}} (\rho - k)_\pm^2 \zeta^2 dx + C \iint\limits_{Q(\tau,r)} \sigma(\rho)  (\rho - k)_\pm^2 |\nabla \zeta|^2 dx dt \nn \\
         &+ C \iint\limits_{Q(\tau,r)}  (\rho - k)_\pm^2 \zeta \partial_t \zeta + C(\sigmam,\Lambda)  \left[  \int_{-\tau }^{0} |A_{k,r}^\pm(t)|dt \right].
        \end{align}
    \end{proposition}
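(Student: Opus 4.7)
The plan is to carry out the standard Caccioppoli argument adapted to the saturation-degenerate setting. I would start from the Steklov-averaged weak formulation \eqref{weak_form_stek}, test against $\phi = (\rho_h-k)_\pm \zeta^2$ restricted to a time slab $B_r \times (-\tau, t)$ for an arbitrary $t \in (-\tau, 0]$, integrate in time, pass to the limit $h \to 0$ using standard Steklov-average convergence results (applicable thanks to $\rho \in C(0,T; L^2)$ and $\Sigma(\rho) \in L^2(0,T;H^1)$ from \Cref{weak_sol_def}), and absorb gradient terms via Young's inequality. The $(\rho-k)_-$ case is symmetric, obtained by testing against $-(\rho_h - k)_- \zeta^2$, so I would focus on $(\rho-k)_+$.

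For the time-derivative contribution, using the identity $\int_{-\tau}^t \partial_s \rho_h \, (\rho_h-k)_+ \zeta^2\,ds = \tfrac{1}{2}(\rho_h-k)_+^2 \zeta^2 \big|_{-\tau}^t - \int_{-\tau}^t (\rho_h-k)_+^2 \zeta \partial_s\zeta\,ds$ produces (after $h \to 0$) the $t$-slice on the left, the $(-\tau)$-slice and the $\int\int (\rho-k)_+^2 \zeta\partial_s\zeta$ integral on the right. The diffusion contribution, after $h \to 0$ and using $\nabla\Sigma(\rho) = \sigma(\rho)\nabla\rho$, produces the good term $\int\int \sigma(\rho)|\nabla(\rho-k)_+|^2\zeta^2$ plus the cross term $2\int\int \sigma(\rho)(\rho-k)_+ \zeta \nabla(\rho-k)_+ \cdot \nabla\zeta$, which Young's inequality absorbs into a quarter of the good term and $C\int\int\sigma(\rho)(\rho-k)_+^2|\nabla\zeta|^2$.

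The drift-plus-interaction contribution is handled using $0\le \rho \le 1$, $\sigma(\rho)\le \sigmam$, and $\|\nabla V + \nabla W\ast\rho\|_{L^\infty}\le C(\Lambda,|\Omega|)$, the latter following from assumptions $\bf{V}$, $\bf{W}$ together with the uniform bound on $\rho$. Writing $\nabla[(\rho-k)_+ \zeta^2] = \nabla(\rho-k)_+ \zeta^2 + 2(\rho-k)_+ \zeta \nabla\zeta$ and applying Young to each piece, the $\nabla(\rho-k)_+$-piece is absorbed into the good diffusion term and the $(\rho-k)_+ \zeta\nabla\zeta$-piece into $\sigma(\rho)(\rho-k)_+^2|\nabla\zeta|^2$. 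The resulting remainders are supported on $A_{k,r}^+(s)$ and weighted by at most $\sigma(\rho)\,C(\Lambda)^2 \le \sigmam C(\Lambda)^2$, so they integrate to $C(\sigmam,\Lambda)\int_{-\tau}^0 |A_{k,r}^+(s)|\,ds$. After rearranging and taking $\esssup_{t\in(-\tau,0)}$ on the left-hand boundary term (valid since the right-hand side is $t$-independent), the bound \eqref{energy-estim} follows.

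The genuine obstacle is the Young step on the drift. Because the good gradient term is degenerately weighted by $\sigma(\rho)$, which vanishes at saturation, a naive split $ab \le \epsilon a^2 + C_\epsilon b^2$ would leave an error weighted by $\sigma^{-1}(\rho)$ that blows up as $\rho\to 1$. The estimate survives because the drift in \eqref{PDE} carries the \emph{same} $\sigma(\rho)$ factor as the diffusion; writing, for instance, $|\rho\sigma(\rho)\nabla(V+W\ast\rho)\cdot\nabla(\rho-k)_+ \zeta^2| \le \tfrac{1}{4}\sigma(\rho)|\nabla(\rho-k)_+|^2\zeta^2 + C\sigma(\rho)\,|\nabla(V+W\ast\rho)|^2 \zeta^2 \,\mathbf{1}_{A_{k,r}^+}$, the remainder loses only a factor of $\sigmam$ rather than $\sigma^{-1}(\rho)$. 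This structural compatibility between drift and diffusion is what makes the estimate go through; everything else reduces to careful bookkeeping and the standard Steklov limit.
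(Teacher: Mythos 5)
Your proposal is correct and follows essentially the same route as the paper: Steklov-averaged weak formulation tested against $\pm(\rho_h-k)_\pm\zeta^2$, integration by parts in time for $\mathcal{I}_1$, Young's inequality to absorb the diffusion cross term, and the crucial observation that the drift carries the same $\sigma(\rho)$ weight as the diffusion so that the Young remainder is bounded by $\sigmam\Lambda^2$ times the measure of $A_{k,r}^\pm$. The only cosmetic difference is that the paper absorbs the $|\Omega|$-dependence of $\|\nabla W*\rho\|_{L^\infty}$ silently into $\Lambda$, which you make explicit.
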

    
\begin{proof}
    In the weak formulation \eqref{weak_form_stek}, we take the test function $\phi = \pm (\rho_h - k)_\pm \zeta^2$, and integrate in time over the range $(-\tau,t_\circ)$ for $t_\circ \in (-\tau,0)$:  
    \begin{align*} 
    &\underbrace{\iint\limits_{Q(\tau,r)} \dt \rho_h \left[\pm \left(\rho_h - k \right)_\pm \zeta^2 \right] dx dt}_{\mathcal{I}_1} + \underbrace{\iint\limits_{Q(\tau,r)}  (\sigma(\rho) \nabla \rho)_h \cdot \nabla \left[\pm \left(\rho_h - k \right)_\pm \zeta^2 \right] dx dt}_{\mathcal{I}_2} \nn \\
        &+ \underbrace{ \iint\limits_{Q(\tau,r)} (\rho \sigma(\rho))_h \nabla \big[ V +  W*\rho_h \big] \cdot \nabla \left[\pm \left(\rho_h - k \right)_\pm \zeta^2 \right] dx dt}_{\mathcal{I}_3} = 0.
    \end{align*}
    We analyze each term separately. Integrating by parts and passing to the limit as $h \to 0$ (see \cite[Lemma 2.1.2]{wu2001nonlinear}), we get
    \begin{align} \label{local-energy-I}
        \mathcal{I}_1 &=\dfrac{1}{2}  \iint\limits_{Q(\tau,r)} \partial_t \left[(\rho_h - k)_\pm^2 \right] \zeta^2   dx dt \nn \\
        &\to   \dfrac{1}{2} \int_{B_r \times \{t_\circ\}} (\rho - k)_\pm^2 \zeta^2 dx - \dfrac{1}{2} \int_{B_r \times \{-\tau \}} (\rho - k)_\pm^2 \zeta^2 dx \nn \\
        &- \iint\limits_{Q(\tau,r)} (\rho - k)_\pm^2 \zeta \partial_t \zeta dx dt.
    \end{align}
As to the second and third terms, we again pass to the limit $h \to 0$ and estimate from below:
\begin{align*}   
    \mathcal{I}_2 &\to \pm  \iint\limits_{Q(\tau,r)} \sigma(\rho) \nabla \rho \cdot \nabla (\rho-k)_\pm \zeta^2 dx dt  \pm  2 \iint\limits_{Q(\tau,r)} \sigma(\rho) \nabla  \rho (\rho-k)_\pm \zeta \cdot \nabla \zeta dx dt \nn \\
    &\geq  (1-\varepsilon_1) \iint\limits_{Q(\tau,r)} \sigma(\rho) |\nabla (\rho-k)_\pm|^2 \zeta^2  dx dt  - \dfrac{1}{\varepsilon_1} \iint\limits_{Q(\tau,r)} \sigma(\rho) (\rho-k)^2_\pm |\nabla \zeta|^2 dx dt,
\end{align*}
where we have applied Young's inequality to the second term in $\mathcal{I}_2$: 
\begin{align*} 
    \left| \pm 2 \iint\limits_{Q(\tau,r)} \sigma(\rho) \nabla (\rho-k)_\pm (\rho-k)_\pm \zeta \cdot \nabla \zeta dx dt \right| 
    &\leq \varepsilon_1  \iint\limits_{Q(\tau,r)} \sigma(\rho) |\nabla (\rho-k)_\pm|^2 \zeta^2  dx dt \\
    &+ \dfrac{1}{\varepsilon_1}  \iint\limits_{Q(\tau,r)} \sigma(\rho) (\rho-k)^2_\pm |\nabla \zeta|^2 dx dt.
\end{align*}
Likewise, recalling the quantity $\sigmam$ fixed in assumption $\bf{S1}$, $\Lambda$ given in \eqref{BigLam}, and the assumption $0 \leq \rho \leq 1$, we estimate
\begin{align*} 
    |\mathcal{I}_3| &\leq  \iint\limits_{Q(\tau,r)} \Lambda \sigma(\rho) |\nabla (\rho-k)_\pm| \zeta^2 dx dt + 2  \iint\limits_{Q(\tau,r)} \Lambda \sigma(\rho)  (\rho-k)_\pm  \zeta |\nabla \zeta| dx dt  \nn \\
     & \leq \dfrac{1}{2 \varepsilon_2} \iint\limits_{Q(\tau,r)} \sigma(\rho) |\nabla (\rho-k)_\pm|^2  \zeta^2 dx dt + \dfrac{1}{\varepsilon_3} \iint\limits_{Q(\tau,r)} \sigma(\rho) (\rho-k)^2_\pm |\nabla \zeta|^2  dx dt \nn \\
     &+ \left( \dfrac{\varepsilon_2}{2} + \varepsilon_3 \right) \sigmam \Lambda \iint\limits_{Q(\tau,r)} \chi_{\{\pm \rho> \pm k\}} dx dt.
\end{align*}
Choosing  $\varepsilon_1 = 1/4,  \varepsilon_2 = 2$, $\varepsilon_3 = 1$ and recalling the definition \eqref{A+-} yield
\begin{align} \label{I23}
    \mathcal{I}_2 + \mathcal{I}_3 &\geq \dfrac{1}{2} \iint\limits_{Q(\tau,r)} \sigma(\rho) |\nabla (\rho-k)_\pm|^2 \zeta^2  dx dt -5 \iint\limits_{Q(\tau,r)} \sigma(\rho) (\rho-k)^2_+ |\nabla \zeta|^2 dx dt \nn \\
    &-  2 \sigmam \cdot \Lambda  \left[  \int_{-\tau }^{0} |A_{k,r}^\pm(t)|dt \right].
    \end{align}
 Rearranging \eqref{local-energy-I}-\eqref{I23} and noting that $t_\circ \in (-\tau,0)$ is arbitrary yield the desired estimate.
\end{proof}

The next proposition demonstrates a logarithmic energy estimate for the weak solution. Such an estimate will be employed to expand, in time, certain claims established in parabolic subcylinders to hold in the entirety of the containing cylinder. The logarithmic function $\psi^\pm$ and its properties have been listed in \Cref{conventions}.

 \begin{proposition} \label{log-energy-estim-prop}
    Let $0 \leq \rho \leq 1$ be a weak solution for \eqref{PDE} in $\Omega_T$  and $H_{\rho,k}^\pm$ be as defined in \eqref{assump3}. There exist constants $c, C(\cdot) >0$, depending only on the data, such that for every parabolic cylinder $Q(\tau,r) \subset \Omega_T$, every $k \in \mathbb{R}$, and every nonnegative, piecewise smooth cutoff function $\zeta$ in $Q(\tau,r)$, there holds
 \begin{align}\label{log-energy-estim}
     \esssup_{-\tau < t < 0} &\int_{B_r \times \{t\}}  [\psi^\pm(\rho)]^2 \zeta^2 dx \leq  \int_{B_r \times \{-\tau\}}  [\psi^\pm(\rho)]^2 \zeta^2 dx  \nn \\
     &+ C \iint\limits_{Q(\tau,r)} 
         \sigma(\rho) \psi^\pm(\rho) |\nabla \zeta|^2  dx dt  + \dfrac{C(\sigmam,\Lambda)}{c^2} \left( 1+ \log \dfrac{H^\pm_{\rho,k}}{c} \right) \left[  \int_{-\tau }^{0} |A_{k,r}^\pm(t)|dt \right].
        \end{align}
\end{proposition}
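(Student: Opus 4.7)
The plan is to mirror the proof of \Cref{energy-estim-prop}, but with a test function calibrated to produce the quantity $[\psi^\pm(\rho)]^2$ in the energy. In the Steklov-averaged weak formulation \eqref{weak_form_stek}, I would take
\[
\phi \;=\; 2\,\psi^\pm(\rho_h)\,\bigl(\psi^\pm\bigr)'(\rho_h)\,\zeta^2,
\]
with $\zeta$ time-independent (as is standard for the logarithmic estimate), integrate on $B_r \times (-\tau, t_\circ)$ for an arbitrary $t_\circ \in (-\tau, 0)$, and pass to the limit $h \to 0$ as in the proof of \Cref{energy-estim-prop}. Since $\tfrac{d}{ds}[\psi^\pm(s)]^2 = 2\psi^\pm(s)(\psi^\pm)'(s)$, the time term collapses to
\[
\int_{B_r\times\{t_\circ\}}[\psi^\pm(\rho)]^2\zeta^2 \,dx \;-\; \int_{B_r\times\{-\tau\}}[\psi^\pm(\rho)]^2\zeta^2 \,dx,
\]
which yields the left-hand side of \eqref{log-energy-estim} after taking the supremum in $t_\circ$.

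For the diffusive term, direct differentiation combined with the crucial identity $(\psi^\pm)''=[(\psi^\pm)']^2$ from \eqref{logfuncEstim} gives
\[
\nabla\phi \;=\; 2\bigl(1+\psi^\pm(\rho_h)\bigr)\bigl[(\psi^\pm)'(\rho_h)\bigr]^2\nabla\rho_h\,\zeta^2 \;+\; 4\psi^\pm(\rho_h)(\psi^\pm)'(\rho_h)\,\zeta\,\nabla\zeta.
\]
Pairing with $\sigma(\rho)\nabla\rho$ generates a manifestly nonnegative principal integrand $2\sigma(\rho)(1+\psi^\pm)[(\psi^\pm)']^2|\nabla\rho|^2\zeta^2$, and Young's inequality applied to the mixed product $4\sigma(\rho)\psi^\pm(\psi^\pm)'\zeta\,\nabla\rho\cdot\nabla\zeta$ absorbs it into this principal integrand, leaving a residue bounded via $[\psi^\pm]^2/(1+\psi^\pm)\leq \psi^\pm$ by $C\sigma(\rho)\psi^\pm|\nabla\zeta|^2$, which is the second right-hand-side term of \eqref{log-energy-estim}.

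The drift/interaction term $\mathcal{I}_3$ supplies the final data-dependent contribution. Using $|\nabla(V + W*\rho_h)| \leq \Lambda/2$ and $\rho\sigma(\rho)\leq \sigma_{\max}$, I split the integrand along the decomposition of $\nabla\phi$ above; two products $\sigma(\rho)(1+\psi^\pm)[(\psi^\pm)']^2|\nabla\rho_h|\zeta^2$ and $\sigma(\rho)\psi^\pm(\psi^\pm)'\zeta|\nabla\zeta|$ arise. Young's inequality dispatches the $\nabla\rho_h$ factor into the principal diffusive term, and the $\nabla\zeta$ factor into $C\sigma(\rho)\psi^\pm|\nabla\zeta|^2$, leaving residues bounded pointwise via the sharp estimates $\psi^\pm \leq \log(H_{\rho,k}^\pm/c)$ and $(\psi^\pm)'\leq 1/c$. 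Crucially, $\psi^\pm$ vanishes off $A_{k,r}^\pm(t)$, so the time integrals of these residues collapse to
\[
\frac{C(\sigma_{\max},\Lambda)}{c^2}\Bigl(1+\log\frac{H_{\rho,k}^\pm}{c}\Bigr)\int_{-\tau}^{0}|A_{k,r}^\pm(t)|\,dt,
\]
matching the final term in \eqref{log-energy-estim}.

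The main obstacle is the precise calibration of Young's inequality in $\mathcal{I}_3$: the $(1+\psi^\pm)$ factor inherited from the identity $(\psi^\pm)''=[(\psi^\pm)']^2$ produces growth in $\psi^\pm$ that must be matched without losing a factor of $\log(H_{\rho,k}^\pm/c)$, which is precisely what explains the $(1+\log(H_{\rho,k}^\pm/c))$ rather than $[\log(H_{\rho,k}^\pm/c)]^2$ dependence in the final bound, and is what will make the estimate useful in the subsequent expansion of positivity across the parabolic cylinder. The Steklov regularization and the $h\to 0$ passage are routine and handled identically to the corresponding step in the proof of \Cref{energy-estim-prop}.
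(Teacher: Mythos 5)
Your proposal is correct and follows essentially the same route as the paper: the same test function $\phi = 2\psi^\pm(\psi^\pm)'\zeta^2$ with a time-independent cutoff, the same gradient decomposition via the identity $(\psi^\pm)'' = [(\psi^\pm)']^2$, the same absorption of the mixed and drift terms by Young's inequality into the nonnegative principal integrand $2\sigma(\rho)(1+\psi^\pm)[(\psi^\pm)']^2|\nabla\rho|^2\zeta^2$, and the same pointwise bounds $\psi^\pm\leq\log(H^\pm_{\rho,k}/c)$, $(\psi^\pm)'\leq 1/c$ to produce the final $A^\pm_{k,r}$ term. No substantive differences.
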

\begin{proof}
    Recall the logarithmic function \eqref{logfunc} and set 
    \begin{equation*}
        \psi^\pm  := \psi^\pm_{\{H_{\rho,k}^\pm,(\rho-k)_\pm,c\}}(\rho)
    \end{equation*}
    for some $0< c < H_{\rho,k}^\pm$. In \eqref{weak_form_stek}, take the test function 
    \begin{equation*}
        \phi = \partial_{\rho_h} [\psi^\pm(\rho_h)]^2 \zeta^2 = 2 \psi^\pm (\psi^\pm)' \zeta^2, 
    \end{equation*}
    to obtain
    \begin{align*} 
    &\underbrace{\iint\limits_{Q(\tau,r)} \dt \rho_h \left[ 2 \psi^\pm (\psi^\pm)' \zeta^2 \right] dx dt}_{\mathcal{I}_1} + \underbrace{\iint\limits_{Q(\tau,r)}  (\sigma(\rho) \nabla \rho)_h \cdot \nabla \left[ 2 \psi^\pm (\psi^\pm)' \zeta^2 \right] dx dt}_{\mathcal{I}_2} \nn \\
        &+ \underbrace{ \iint\limits_{Q(\tau,r)} (\rho \sigma(\rho))_h \nabla \big[ V +  W*\rho_h \big] \cdot \nabla \left[ 2 \psi^\pm (\psi^\pm)' \zeta^2 \right] dx dt}_{\mathcal{I}_3} = 0.
    \end{align*}
    Sending $h \to 0$, we obtain, for any $t_\circ \in (-\tau, 0)$,
    \begin{align} \label{I1-log}
        \mathcal{I}_1 \to \int_{B_r \times \{t_\circ\}} [\psi^\pm(\rho)]^2 \zeta^2 dx - \int_{B_r  \times \{-\tau\}} [\psi^\pm(\rho)]^2 \zeta^2 dx.
    \end{align}
    Considering the second term, take $h \to 0$ and apply Young's inequality to obtain
    \begin{align*} 
        \mathcal{I}_2 &\to 2 \iint\limits_{Q(\tau,r)}  \sigma(\rho) \nabla \rho \cdot \nabla \rho ( 1 + \psi^\pm)[(\psi^\pm)']^2 \zeta^2 dx dt + 4 \iint\limits_{Q(\tau,r)}  \sigma(\rho) \nabla \rho \cdot \nabla \zeta    \psi^\pm(\psi^\pm)' \zeta   dx dt \nn \\
        &\geq  2\iint\limits_{Q(\tau,r)}  \sigma(\rho) |\nabla \rho|^2( 1 + \psi^\pm)[(\psi^\pm)']^2 \zeta^2 dx dt \nn \\
        &-2  \iint\limits_{Q(\tau,r)} \varepsilon_1 \sigma(\rho) |\nabla \zeta|^2 dx dt - 2 \iint\limits_{Q(\tau,r)} \dfrac{1}{\varepsilon_1} \sigma(\rho) |\nabla \rho|^2 [\psi^\pm]^2[(\psi^\pm)']^2 \zeta^2 dx dt .
    \end{align*}
 The third term is estimated as  
\begin{align*}
    |\mathcal{I}_3| & \leq 2  \iint\limits_{Q(\tau,r)}  \Lambda \sigma(\rho) |\nabla \rho|   ( 1 + \psi^\pm)[(\psi^\pm)']^2 \zeta^2 dx dt  + 4 \iint\limits_{Q(\tau,r)}  \Lambda \sigma(\rho) 
    |\nabla \zeta| \psi^\pm(\psi^\pm)'  \zeta dx dt \nn \\
    & \leq \iint\limits_{Q(\tau,r)} \sigma(\rho) \left( \varepsilon_2 |\nabla \rho|^2 + \dfrac{1}{\varepsilon_2} \Lambda^2  \right)  ( 1 + \psi^\pm)[(\psi^\pm)']^2 \zeta^2 dx dt  \\
    &+ 2\iint\limits_{Q(\tau,r)}  \sigma(\rho) \left( \varepsilon_3 |\nabla \zeta|^2 + \dfrac{1}{\varepsilon_3} \Lambda^2 [\psi^\pm (\psi^\pm)']^2 \zeta^2 \right)   dx dt.
\end{align*}
Choosing $\varepsilon_1 = 2 \psi^\pm$, $\varepsilon_2 = 1$, $\varepsilon_3 = \psi^\pm$ and noting $\psi^\pm \leq (1+ \psi^\pm)$, we obtain the lower bound
\begin{align} \label{I23-log}
    \mathcal{I}_2 + \mathcal{I}_3 & \geq -6 \iint\limits_{Q(\tau,r)} \sigma(\rho)     \psi^\pm(\rho) |\nabla \zeta|^2  dx dt \nn \\
    &- 3\dfrac{ \sigmam \cdot \Lambda^2}{c^2} \left( 1+ \log \dfrac{H^\pm_{\rho,k}}{c} \right) \left[  \int_{-\tau }^{0} |A_{k,r}^\pm(t)|dt \right],
    \end{align}
    where we have used the estimates \eqref{logfuncEstim}. Collecting \eqref{I1-log}-\eqref{I23-log} yield the desired estimate.
\end{proof}

\subsection{Behavior near the saturation value} \label{first-alt-sec}
We begin by assuming that \eqref{first-alt} holds in the parabolic cylinder $Q(\theta_1 R^2, R)$. The aim of the following proposition is to show that the value of the solution $\rho$ is below the saturation value in a smaller parabolic cylinder, along with determining $\nu$. The oscillation reduction then follows immediately.  
\begin{proposition} \label{prop1}
    There exists a number $\nu \in (0,1)$, depending only on the data, such that if \eqref{first-alt} holds, then, 
    \begin{equation}  \label{first-alt-conclusion-fa}
        \rho(x,t) < 1- \dfrac{\omega}{4} \  \ \ \text{ a.e. in }  \ \ (x,t) \in Q\bigg(\theta_1 \left(\frac{R}{2}\right)^2,\frac{R}{2} \bigg).
    \end{equation}
\end{proposition}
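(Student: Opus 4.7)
The plan is a De Giorgi-type iteration tailored to the intrinsic scaling. I introduce the nested parabolic cylinders
$$Q_n := B_{R_n}\times(-\tau_n,0), \qquad R_n := \tfrac{R}{2}+\tfrac{R}{2^{n+1}}, \qquad \tau_n := \theta_1 R_n^2,$$
so that $Q_0=Q(\theta_1 R^2,R)$ and $\bigcap_n Q_n=Q(\theta_1(R/2)^2,R/2)$. I take increasing truncation levels $k_n:=1-\omega/4-\omega/2^{n+2}$, with $k_0=1-\omega/2$ and $k_n\uparrow 1-\omega/4$, and piecewise smooth cutoffs $\zeta_n$ supported in $Q_n$, equal to $1$ on $Q_{n+1}$, vanishing at $t=-\tau_n$, with $|\nabla\zeta_n|\lesssim 2^n/R$ and $|\partial_t\zeta_n|\lesssim 4^n/(\theta_1 R^2)$. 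Set $A_n := |[\rho>k_n]\cap Q_n|$ and $Y_n:=A_n/|Q_n|$.

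\textbf{Step 1 (energy estimate).} I will apply Proposition~\ref{energy-estim-prop} with $k=k_n$, $\zeta=\zeta_n$ for the $+$ truncation. On $[\rho>k_n]$ one has $(\rho-k_n)_+\leq\omega/2$ and, by $\mathbf{S2}$, $\sigma(\rho)\leq c_1(\omega/2)^\beta$; coupled with $1/\theta_1=c_0\Theta(\omega/4)\sim\omega^\beta$, every principal term on the right-hand side scales as $\omega^{\beta+2}\cdot 4^n/R^2\cdot A_n$. The drift contribution $C(\sigma_{\max},\Lambda)A_n$ can be assumed dominated by these principal terms by Remark~\ref{LOT} (else oscillation decay is immediate). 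This yields
$$E_n:=\esssup_{-\tau_n<t<0}\int_{B_{R_n}}(\rho-k_n)_+^2\zeta_n^2\,dx+\iint_{Q_n}\sigma(\rho)|\nabla(\rho-k_n)_+|^2\zeta_n^2\,dx\,dt \leq C\,4^n\,\tfrac{\omega^{\beta+2}}{R^2}\,A_n.$$

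\textbf{Step 2 (degeneracy-adapted variable).} The main obstacle is that $E_n$ controls the weighted gradient $\sigma(\rho)|\nabla(\rho-k_n)_+|^2$, whereas $\sigma$ has no uniform lower bound on $[\rho>k_n]$. To bypass this I work with
$$w_n:=\int_{k_n}^{\rho}\sqrt{\sigma(s)}\,ds\cdot\chi_{[\rho>k_n]},$$
which satisfies $|\nabla w_n|^2=\sigma(\rho)|\nabla(\rho-k_n)_+|^2$. Since $k_{n+1}\leq 1-\omega/4$, assumption $\mathbf{S2}$ gives $\sigma(s)\geq c_0(\omega/4)^\beta$ on $[k_n,k_{n+1}]$, hence the uniform pointwise lower bound
$$w_n\geq\sqrt{c_0}\,(\omega/4)^{\beta/2}\,(k_{n+1}-k_n)=C\,\omega^{\beta/2+1}\,2^{-n}\quad\text{on }[\rho>k_{n+1}],$$
while conversely $w_n\leq\sqrt{c_1}(\omega/2)^{\beta/2}(\rho-k_n)_+$, so that zeroth-order norms of $w_n$ are controlled by those of $(\rho-k_n)_+$ up to a factor $\omega^\beta$.

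\textbf{Step 3 (parabolic Gagliardo--Nirenberg and recursion).} I will apply the parabolic Sobolev inequality
$$\iint_{Q_n}(w_n\zeta_n)^{2(N+2)/N}\,dx\,dt\leq C\Bigl(\esssup_t\int(w_n\zeta_n)^2\,dx\Bigr)^{2/N}\iint_{Q_n}|\nabla(w_n\zeta_n)|^2\,dx\,dt.$$
By Step 2, the first factor is bounded by $C\omega^\beta E_n$ and the second by $CE_n$ (the gradient of the cutoff contributes $\omega^{\beta+2}4^n/R^2\cdot A_n$, which is of the same order as $E_n$). Combining with the lower bound on $w_n$,
$$\bigl(C\omega^{\beta/2+1}2^{-n}\bigr)^{2(N+2)/N}A_{n+1}\leq C\omega^{2\beta/N}E_n^{1+2/N}.$$
Substituting the estimate from Step 1 and normalizing by $|Q_{n+1}|\sim\theta_1 R^{N+2}$, the powers $\omega^{(\beta+2)(N+2)/N}$ cancel on the two sides, and the remaining factor $\theta_1^{2/N}\sim\omega^{-2\beta/N}$ cancels the leftover $\omega^{2\beta/N}$; similarly all powers of $R$ cancel. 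I am left with the clean recurrence
$$Y_{n+1}\leq C\,b^n\,Y_n^{1+2/N},\qquad b:=16^{(N+2)/N},$$
with $C,b>1$ depending only on the data.

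\textbf{Step 4 (conclusion).} Invoke Lemma~\ref{fast-conv-lemma} with $Z_n\equiv 0$ and $\upsilon=2/N$: $Y_n\to 0$ whenever $Y_0\leq\nu^*$ for a universal threshold $\nu^*$ depending only on $N,\beta,c_0,c_1,\sigma_{\max},\Lambda$. Choose $\nu:=\nu^*$ in \eqref{first-alt}; hypothesis then gives $Y_0\leq\nu^*$ and so $|[\rho>1-\omega/4]\cap Q(\theta_1(R/2)^2,R/2)|=0$, which is \eqref{first-alt-conclusion-fa}.

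The chief technical hurdle is the saturation degeneracy: the Caccioppoli inequality delivers only the $\sigma$-weighted gradient, while parabolic Sobolev needs an unweighted one. Passing to $w_n$ reroutes the degeneracy into zeroth-order $\omega$-factors, and the intrinsic time rescaling $\theta_1\sim\omega^{-\beta}$ is precisely calibrated to make those factors cancel in the recursion; absorbing the drift-induced lower-order term via Remark~\ref{LOT} is then routine.
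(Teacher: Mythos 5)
Your proposal is correct and reaches the same conclusion, but it handles the key difficulty --- the degeneracy of $\sigma$ on the set $[\rho>k_n]$ --- by a genuinely different device. The paper works with the truncation $\rho_\omega=\min\{\rho,1-\tfrac{\omega}{4}\}$, so that $\nabla(\rho_\omega-k_n)_+$ is supported only where $c_0\Theta(\omega/4)\le\sigma(\rho)\le 2^\beta\tfrac{c_1}{c_0}\Theta(\omega/4)$; the price is an extra flux term $\mathcal{I}_2'$ on $\{\rho\ge 1-\tfrac{\omega}{4}\}$, which is absorbed via the auxiliary primitive $\Sigma_+$ and an integration by parts requiring control of $|\Delta\zeta_n|$, after which a change of time variable $\tilde t=t/\theta_1$ places the estimate in the standard $V^2$ framework and a two-sequence $(Y_n,Z_n)$ fast-convergence argument closes the iteration. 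You instead keep $\rho$ untouched, invoke the already-proved Caccioppoli estimate of Proposition~\ref{energy-estim-prop} off the shelf, and reroute the weighted gradient through the Kirchhoff-type variable $w_n=\int_{k_n}^{\rho}\sqrt{\sigma}$, whose jump between consecutive levels retains the factor $(\omega/4)^{\beta/2}$ precisely because $[k_n,k_{n+1}]\subset[1-\tfrac{\omega}{2},1-\tfrac{\omega}{4}]$, where $\mathbf{S2}$ gives a lower bound on $\sigma$; the intrinsic scaling then enters only as bookkeeping of $\omega^{\beta}$ and $\theta_1$ factors (your cancellation computation checks out), and a single-sequence De Giorgi recursion $Y_{n+1}\le Cb^nY_n^{1+2/N}$ suffices. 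What your route buys: no truncated solution, no $\Sigma_+$ and no second-derivative bounds on the cutoffs, no explicit time rescaling, and no $Z_n$ sequence. What it costs: you must justify that $w_n$ is an admissible $V^2$ function with $|\nabla w_n|^2=\sigma(\rho)|\nabla(\rho-k_n)_+|^2$ (a chain-rule statement that is standard but should be recorded, since only $\Sigma(\rho)\in L^2(0,T;H^1)$ is assumed), and Lemma~\ref{fast-conv-lemma} is stated for two positive sequences, so you should either cite the classical one-sequence version or run it with a dummy $Z_n$. Your absorption of the drift term $C(\sigmam,\Lambda)A_n$ via Remark~\ref{LOT} matches what the paper itself does when it bounds $\theta_1^2\le R^{-N\kappa}(\omega/4)^2$.
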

\begin{proof}
         We shall work with the quantity $\rho_{\omega}:= \min\{ \rho,  1-\frac{\omega}{4}\}$ and proceed to construct an energy estimate for a truncated version of it. Define, for $n = 0,1,2,\dots$, the two sequences 
     \begin{align*}
        R_n &= \dfrac{R}{2} + \dfrac{R}{2^{n+1}},  \\
        k_n & =1 - \dfrac{\omega}{4} - \dfrac{\omega}{2^{n+2}}. 
    \end{align*}
    Construct a family of nested and shrinking parabolic cylinders $Q_n^1 := Q(\theta_1 R^2_n,R_n)$ along with the corresponding cutoff functions $0 \leq \zeta_n \leq 1$ in $Q_n^1$ satisfying:
     \begin{equation} \label{cutoffs1}
         \begin{cases}
             \zeta_n \equiv 1 \ \ \ \ \  \text{in} \ Q_{n+1}^1, \\ 
            \zeta_n \equiv 0 \ \ \ \  \ \text{on} \  \partial_p Q_n^1, \\
              0 \leq \partial_t \zeta_n \leq \dfrac{2^{2(n+2)}}{\theta_1 R^2}, \ \ \  |\nabla \zeta_n| \leq \dfrac{2^{n+2}}{ R}, \ \ |\Delta \zeta_n| \leq \dfrac{2^{2(n+2)}}{ R^2},
         \end{cases}
     \end{equation}
where $\partial_p Q_n^1$ denotes the parabolic boundary of $Q^1_n$. In \eqref{weak_form_stek}, take 
    \begin{equation*}
        \phi := \left( (\rho_{\omega})_h - k_n \right)_+ \zeta_n^2,
    \end{equation*}
    and integrate over the time interval $(-\theta_1 R^2_n,t_\circ)$ for $t_\circ \in (-\theta_1 R^2_n,0)$:
        \begin{align} \label{term1-fa}
    &\underbrace{\iint\limits_{Q_n^1} \dt \rho_h \left[ \left( (\rho_{\omega})_h - k_n \right)_+ \zeta_n^2 \right] dx dt}_{\mathcal{I}_1} + \underbrace{\iint\limits_{Q_n^1}  (\sigma(\rho) \nabla \rho)_h \cdot \nabla \left[ \left( (\rho_{\omega})_h - k_n \right)_+ \zeta_n^2 \right] dx dt}_{\mathcal{I}_2} \nn \\
        &+\underbrace{ \iint\limits_{Q_n^1} (\rho \sigma(\rho))_h \nabla \big[ V +  W*\rho_h \big] \cdot \nabla \big[\left( (\rho_{\omega})_h - k_n \right)_+ \zeta_n^2 \big] dx dt}_{\mathcal{I}_3} = 0.
    \end{align}
    Observe that $(\rho_\omega - k_n)_+$ is nonzero on the set $\{k_n < \rho \}$. Specifically, if $\rho_\omega = \rho < 1- \frac{\omega}{4}$, 
    \begin{equation} \label{obs2}
        (\rho_{\omega} -k_n)_+ = (\rho -k_n)_+ \leq  \dfrac{\omega}{2^{n+2}} \leq \dfrac{\omega}{4}.
    \end{equation}
    On the other hand,  if $ \rho  \geq 1- \frac{\omega}{4} = \rho_\omega > k_n$, then
        \begin{equation} \label{obs1}
        (\rho_{\omega} -k_n)_+ = \dfrac{\omega}{2^{n+2}} \leq \dfrac{\omega}{4} .
    \end{equation}
    Moreover, notice that $|\nabla (\rho_\omega - k_n)_+| \neq 0$ on the set $\{ k_n  < \rho < 1 -\frac{\omega}{4} \}$. With these observations, we proceed to estimate each term in \eqref{term1-fa}. Firstly, we have 
    \begin{align*}
        \mathcal{I}_1 &= \iint\limits_{Q_n^1} \dt \rho_h \left[ \left( (\rho_{\omega})_h - k_n \right)_+ \zeta_n^2 \right] \chi_{\left\{(\rho_w)_h = \rho_h\right\}} dx dt \\
        &+ \iint\limits_{Q_n^1} \dt \rho_h \left[\left( (\rho_{\omega})_h - k_n \right)_+ \zeta_n^2 \right] \chi_{\left\{(\rho_w)_h = 1-\frac{\omega}{4}\right\}} dx dt \\
        &= \dfrac{1}{2}  \iint\limits_{Q_n^1} \dt \left[ \left( (\rho_{\omega})_h - k_n \right)_+^2\right] \zeta_n^2  dx dt +  \left( \dfrac{\omega}{2^{n+2}} \right) \iint\limits_{Q_n^1} \dt \left[ \left( \rho_h - \Big[1-\dfrac{\omega}{4} \Big] \right)_+ \right]\zeta_n^2 dx dt.
    \end{align*}
    Next, we integrate by parts with respect to time, send $h \to 0$, and use \eqref{cutoffs1} to obtain
    \begin{align}\label{estim-I}
        \mathcal{I}_1 &\to \dfrac{1}{2}\int_{B_n \times \{t_\circ\}} (\rho_{\omega} - k_n)_+^2 \zeta_n^2 dx - \iint\limits_{Q_n^1} (\rho_{\omega} - k_n)_+^2 \zeta_n \dt \zeta_n \nn dx dt \\
        &+\left(\dfrac{\omega}{2^{n+2}}\right) \int_{B_n \times \{t_\circ\}} \left( \rho -\Big[1-\dfrac{\omega}{4} \Big] \right)_+ \zeta_n^2 dx \nn\\
        &-2\left(   \dfrac{ \omega}{2^{n+2}} \right) \iint\limits_{Q_n^1}  \left( \rho - \Big[1-\dfrac{\omega}{4} \Big]  \right)_+\zeta_n \dt \zeta_n dx dt \nn \\
        &\geq \dfrac{1}{2}\int_{B_n \times \{t_\circ\}} (\rho_{\omega} - k_n)_+^2 \zeta_n^2 dx - 3 \left(\dfrac{\omega}{4}\right)^2 \dfrac{2^{2(n+2)}}{\theta_1 R^2} \iint\limits_{Q_n^1}  \chi_{\{ \rho_{\omega} > k_n\}}dx dt, 
    \end{align}
    where the last inequality follows from the nonnegativity of the third term.
    Next, we estimate $\mathcal{I}_2$ in \eqref{term1-fa}. Passing to the limit $h \to 0$ and noting where $\nabla\left( \rho_{\omega}  - k_n \right)_+$ is nonzero yield
    \begin{align} \label{II1}
        \mathcal{I}_2 &\to \iint\limits_{Q_n^1} \sigma(\rho) \nabla \rho \cdot \nabla \left[\left( \rho_{\omega}  - k_n \right)_+ \zeta_n^2 \right] dx dt \nn\\
        &= \iint\limits_{Q_n^1} \sigma(\rho)  \nabla \rho \cdot \nabla \left[\left( \rho_{\omega}  - k_n \right)_+ \zeta_n^2 \right] \chi_{\left\{\rho_w = \rho \right\}} dx dt \nn \\
        &+ \iint\limits_{Q_n^1} \sigma(\rho)  \nabla \rho \cdot \nabla \left[\left( \rho_{\omega}  - k_n \right)_+ \zeta_n^2\right] \chi_{\left\{\rho_w = 1-\frac{\omega}{4}\right\}} dx dt \nn\\
        &= \iint\limits_{Q_n^1} \sigma(\rho_\omega) | \nabla \left( \rho_{\omega}  - k_n \right)_+|^2 \zeta_n^2 dx dt \nn \\
        &+ 2 \iint\limits_{Q_n^1} \sigma(\rho_\omega) \nabla\left( \rho_{\omega}  - k_n \right)_+ \cdot \nabla \zeta_n \zeta_n \left( \rho_{\omega}  - k_n \right)_+ dx dt \nn \\
        &+ 2 \left(\dfrac{\omega}{2^{n+2}}\right) \iint\limits_{Q_n^1} \sigma(\rho) \nabla \rho \cdot \nabla \zeta_n \zeta_n  \chi_{\{ \rho \geq 1-\frac{\omega}{4}\}} dx dt.
    \end{align}
    Applying Young's inequality to the second term on the rightmost side of the above expression
    \begin{align*} 
        &\left|  2 \iint\limits_{Q_n^1} \sigma(\rho_\omega) \nabla\left( \rho_{\omega}  - k_n \right)_+ \cdot \nabla \zeta_n \zeta_n \left( \rho_{\omega}  - k_n \right)_+ dx dt \right| \nn \\
        &\leq \dfrac{1}{4} \iint\limits_{Q_n^1} \sigma(\rho_\omega) | \nabla \left( \rho_{\omega}  - k_n \right)_+|^2 \zeta_n^2 dx dt + 4 \iint\limits_{Q_n^1} \sigma(\rho_\omega) \left( \rho_{\omega}  - k_n \right)^2_+ |\nabla \zeta_n|^2 dx dt.
    \end{align*}
    We arrive at the lower bound
    \begin{align} \label{II1-new}
        \mathcal{I}_2 &\geq \dfrac{3}{4}\iint\limits_{Q_n^1} \sigma(\rho_\omega) | \nabla \left( \rho_{\omega}  - k_n \right)_+|^2 \zeta_n^2 dx dt \nn - 4 \iint\limits_{Q_n^1} \sigma(\rho_\omega) \left( \rho_{\omega}  - k_n \right)^2_+ |\nabla \zeta_n|^2 dx dt \nn \\
        &+ \underbrace{2 \left(\dfrac{\omega}{2^{n+2}}\right) \iint\limits_{Q_n^1} \sigma(\rho) \nabla \rho \cdot \nabla \zeta_n \zeta_n  \chi_{\{ \rho \geq 1-\frac{\omega}{4}\}} dx dt}_{\mathcal{I}'_2}.
    \end{align}
    Now, we make the following observations: on the set $\{1-\frac{\omega}{2} \leq k_n < \rho_{\omega} = \rho < 1- \frac{\omega}{4} \}$, we have by assumption $\mathbf{S2}$
    \begin{align*}
        \sigma(\rho) &\geq c_0 \Theta\left(1-\rho\right) \geq c_0 \Theta\left(1-\Big[ 1-\dfrac{\omega}{4} \Big]\right) = \dfrac{1}{\theta_1}, \\
        \sigma(\rho) &\leq c_1 \Theta\left(1-\rho\right) \leq c_1 \Theta\left(1-\Big[ 1-\dfrac{\omega}{2} \Big]\right) \leq 2^{\beta} \dfrac{c_1}{c_0} \dfrac{1}{\theta_1},
    \end{align*}
    which, along with \eqref{cutoffs1} and \eqref{obs2}, allow us to estimate the first two terms on the rightmost side of \eqref{II1-new} from below. As to the third term, we introduce
    \begin{equation*}
        \Sigma_+(\rho) := \left[ \int_{1-\frac{\omega}{4}}^\rho \sigma (s) ds \right]_+, 
    \end{equation*}
    and notice that on the set $\{1 \geq \rho \geq 1-\frac{\omega}{4}\}$, we have again by assumption $\mathbf{S2}$,
    \begin{align} \label{Big-sigma}
        \Sigma_+(\rho) &\leq c_1 \Theta\left( 1-\Big[ 1-\dfrac{\omega}{4} \Big] \right) \left(1-\Big[ 1-\dfrac{\omega}{4} \Big]\right) = \dfrac{c_1}{c_0} \dfrac{1}{\theta_1}  \left(\dfrac{\omega}{4}\right).
    \end{align}
    Utilizing this estimate, integration by parts, and use of \eqref{cutoffs1} and \eqref{obs1}, the third term on the rightmost side of \eqref{II1-new} is estimated as follows
    \begin{align*}  
    \left| \mathcal{I}'_2 \right| 
    &\leq \left|2 \left(\dfrac{\omega}{2^{n+2}}\right) \iint\limits_{Q_n^1} \nabla \Sigma_+(\rho) \cdot \nabla \zeta_n \zeta_n   dx dt \right| \nn \\
        &= \left| -2 \left(\dfrac{\omega}{2^{n+2}}\right) \iint\limits_{Q_n^1}  \Sigma_+(\rho)  \left( \zeta_n |\Delta \zeta_n| + |\nabla \zeta_n|^2 \right) dx dt \right| \nn \\
        & \leq 4 \dfrac{c_1} {c_0} \dfrac{1}{\theta_1}  \left(\dfrac{\omega}{4} \right)^2\bigg(\dfrac{2^{2(n+2)}}{R^2}\bigg) \iint\limits_{Q_n^1}  \chi_{\{ \rho_{\omega} > k_n\}}  dx dt .
    \end{align*}
Combining the preceding estimates, we arrive at
    \begin{align} \label{estim-II}
        \mathcal{I}_2 &\geq \dfrac{3}{4} \dfrac{1}{\theta_1} \iint\limits_{Q_n^1} | \nabla \left( \rho_{\omega}  - k_n \right )_+|^2 \zeta_n^2 dx dt - 4 (1+2^{\beta}) \dfrac{c_1}{c_0} \left(\dfrac{\omega}{4}\right)^2  \bigg(\dfrac{2^{2(n+2)}}{\theta_1 R^2}\bigg) \iint\limits_{Q_n^1}  \chi_{ \{ \rho_\omega > k_n \}} dx dt.
    \end{align}
        As to $\mathcal{I}_3$ in \eqref{term1-fa}, upon letting $h \to 0$, applying Young's inequality, we obtain
    \begin{align} \label{estim-III}
        |\mathcal{I}_3| 
        & \leq  \dfrac{\sigmam \Lambda}{2 \varepsilon_1} \iint\limits_{Q_n^1} | \nabla \left( \rho_{\omega}  - k_n \right)_+|^2 \zeta_n^2 dx dt +  \dfrac{\sigmam \Lambda}{ \varepsilon_2} \iint\limits_{Q_n^1} \left( \rho_{\omega}  - k_n \right)^2_+  |\nabla \zeta_n|^2 dx dt \nn \\
        &+ \sigmam \Lambda \left( \dfrac{\varepsilon_1}{2}   + \varepsilon_2  \right) \iint\limits_{Q_n^1} \chi_{\{ \rho_{\omega} > k_n\}}  dx dt \nn \\
        & \leq  \dfrac{1}{4} \dfrac{1}{\theta_1} \iint\limits_{Q_n^1} | \nabla \left( \rho_{\omega}  - k_n \right)_+|^2 \zeta_n^2 dx dt \nn \\
        &+ \dfrac{c_1}{c_0} \left(\dfrac{\omega}{4}\right)^2  \bigg(\dfrac{2^{2(n+2)}}{ \theta_1 R^2}\bigg) \iint\limits_{Q_n^1}  \chi_{\{ \rho_\omega > k_n\}} dx dt \nn \\
        &+ 2\left(\sigmam \Lambda \right)^2 \theta_1  \left[ \int_{-\theta_1 R^2_n }^{0} |A_{k_n,R_n}^+(t)|dt \right],
        \end{align}
        where we chose $\varepsilon_1 = 2 \theta_1 \sigmam \Lambda$ and $\varepsilon_2 = \theta_1 \sigmam \Lambda c_0/c_1$, used $c_0 \leq c_1$, and defined 
        $$A_{k_n,R_n}^+(t) := \{ x \in B_n: \rho_\omega(x,t) > k_n\},$$
        with the shorthand $B_n:= B_{R_n}$. Subsituting $\eqref{estim-I},\eqref{estim-II}$, and $\eqref{estim-III}$ in \eqref{term1-fa} we arrive at the estimate
        \begin{align}\label{full-estim}
            \esssup_{-\theta_1 R_n^2 < t < 0} & \int_{B_n \times \{t\}} (\rho_\omega - k_n)_+^2 \zeta_n^2 dx + \dfrac{1}{\theta_1} \iint\limits_{Q_n^1}  
         |\nabla (\rho_{\omega}  - k_n)_+|^2 \zeta_n^2 dx dt \nn \\
          &\leq C(\beta,c_0,c_1) \left(\dfrac{\omega}{4}\right)^2  \bigg(\dfrac{2^{2(n+2)}}{\theta_1 R^2}\bigg) \iint\limits_{Q_n^1}  \chi_{\{ \rho_\omega > k_n\}} dx dt \nn \\
          &+ C(\sigmam,\Lambda) \theta_1^2 \left[ \dfrac{1}{\theta_1} \int_{-\theta_1 R^2_n }^{0} |A_{k_n,R_n}^+(t)|dt \right].
        \end{align}
Now, we wish to write the above estimate in terms of the norm \eqref{Vnorm}. To this end, we perform a change of the time variable by introducing \begin{equation} \label{change-of-var}
    \tilde{t} = \dfrac{t}{\theta_1},
\end{equation}
such that we have
\begin{equation*}
\tilde{\rho}_\omega(\cdot,\tilde{t}) := \rho_{\omega}(\cdot,t = \theta_1 \tilde{t}), \ \ \text{and} \ \ \ \tilde{\zeta}_n(\cdot, \tilde{t}) := \zeta_n(\cdot,t = \theta_1 \tilde{t}).
\end{equation*}
Furthermore, owing to the definition of $\theta_1$ given in \eqref{scl}, we estimate
\begin{equation*}
    \theta_1^{2}
    = c_0^{-2} 2^{4(\beta+1)} \omega^{-2\left(\beta+1\right)} \left(\dfrac{\omega}{4}\right)^2
    \leq   R^{-N \kappa} \left(\dfrac{\omega}{4}\right)^2,
\end{equation*}
where the inequality is due to the assumption $c_0^{-2} 2^{4(\beta+1)} \omega^{-2\left(\beta+1\right)} R^{N \kappa} \leq 1$ (cf. \Cref{LOT}) for some $\kappa>1$ to be fixed momentarily. Consequently, the estimate \eqref{full-estim} reads 
        \begin{align}\label{full-estim-bar}
            \|( \tilde{\rho}_\omega - k_n)_+ \tilde{\zeta_n}\|_{V^2(Q_n)} &\leq C(\beta,c_0,c_1) \left(\dfrac{\omega}{4}\right)^2  \bigg(\dfrac{2^{2(n+2)}}{ R^2}\bigg)  |\tilde{A}^+_n| \nn \\
            &+ C(\sigmam,\Lambda) R^{-N \kappa} \left(\dfrac{\omega}{4}\right)^2  \left[\int_{-R^2_n }^{0} |\tilde{A}^+_n( \tilde{t} )|d\tilde{t} \right],
        \end{align}
        with $Q_n := Q(R^2_{n},R_{n})$ and 
        \begin{equation*}
            \tilde{A}_n^+(\tilde{t} ) := \{ x \in B_n : \tilde{\rho}_\omega(x,\tilde{t}) > k_n \},  \ \ \
            |\tilde{A}^+_n| = \int_{-R^2_n }^{0} |\tilde{A}^+_n( \tilde{t} )|d\tilde{t} .
        \end{equation*}
    Observing that $R_{n+1}<R_{n}$, employing a Sobolev embedding theorem \cite[Corollary 3.1 of Ch. I]{dibenedetto1993degenerate}, and using \eqref{full-estim-bar}, we get the following inequalities
\begin{align} \label{norm-estim}
    \|( \tilde{\rho}_\omega - k_n)_+\|^2_{L^2(Q_{n+1})} & \leq  \|( \tilde{\rho}_\omega - k_n)_+ \tilde{\zeta}_n\|^2_{L^2(Q_n)} \nn \\
    & \leq C(N) \|( \tilde{\rho}_\omega - k_n)_+ \tilde{\zeta_n}\|^2_{V^2(Q_n)} |\tilde{A}^+_n|^{\frac{2}{N+2}} \nn \\
    & \leq C(\beta,c_0,c_1,\sigmam, \Lambda, N)  \left[\left(\dfrac{\omega}{4}\right)^2  \bigg(\dfrac{2^{2(n+2)}}{ R^2}\bigg)  |\tilde{A}^+_n|^{1+\frac{2}{N+2}} \right. \nn \\
    & \ \ \ \ \   \ \ \ \ \ \ \ \ \ \ \ \ \ \ \ \ \  \ \  \ \ \ \ \ \   \ \ \ \  \left. +  R^{-N \kappa} \left(\dfrac{\omega}{4}\right)^2  |\tilde{A}^+_n|^{1+\frac{2}{N+2}} \right].
\end{align}
As $k_n < k_{n+1} $, the leftmost side of \eqref{norm-estim} is estimated below by
\begin{equation}\label{norm-est-bel}
    \|( \tilde{\rho}_\omega - k_n)_+\|^2_{L^2(Q_{n+1})}  \geq |k_{n} - k_{n+1}|^2 |\tilde{A}^+_{n+1}| \geq  \dfrac{1}{2^{2(n+2)}} \left( \dfrac{\omega}{4} \right)^2 |\tilde{A}^+_{n+1}|  .
\end{equation} 
 Dividing \eqref{norm-estim}-\eqref{norm-est-bel} by $|Q_{n+1}|$ and introducing the quantities
\begin{equation*}
    Y_n := \dfrac{|\tilde{A}^+_n|}{|Q_n|}, \ \ \ \ \ \ Z_{n} := \dfrac{|\tilde{A}^+_n|^{\frac{1}{1+\kappa}}}{|B_n|},
\end{equation*}
we obtain
\begin{align}\label{recur1}
    Y_{n+1}  &\leq C(\beta,c_0,c_1,\sigmam,\Lambda, N) 4^{2n} (Y_n^{1+\frac{2}{N+2}} +   Y_n^{\frac{2}{N+2}} Z_{n}^{1+\kappa} ),
\end{align}
where we have employed the following estimates 
\begin{align*}
\dfrac{R_n}{R_{n+1}} \leq 2,  \ \ \ \ \ \ \ \ 
\dfrac{|Q(R^2_{n},R_{n})|^{1+\frac{2}{N+2}}}{|Q(R^2_{n+1},R_{n+1})|} \leq C(N) R^2, \ \ \  \ \ \ \ \ 
\dfrac{|Q(R^2_{n},R_{n})|^{\frac{2}{N+2}}}{|Q(R^2_{n+1},R_{n+1})|} \leq C(N) R^{-N} .
\end{align*}
Likewise, noting that $|B_{n+1}| \geq C(N) 2^{-N} R^{N}$, we estimate
\begin{align*} 
    \dfrac{1}{2^{2(n+2)}} \left( \dfrac{\omega}{4} \right)^2 Z_{n+1} \leq |k_n-k_{n+1}|^2 Z_{n+1}  &\leq \dfrac{1}{|B_{n+1}|}  \|( \tilde{\rho}_\omega - k_n)_+ \|_{L^{1+\kappa}(Q_{n+1})} \\
    &\leq \dfrac{1}{|B_{n+1}|}  \|( \tilde{\rho}_\omega - k_n)_+ \tilde{\zeta_n}\|_{L^{1+\kappa}(Q_{n})} \\
    & \leq C(N) R^{-N}  \|( \tilde{\rho}_\omega - k_n)_+ \tilde{\zeta_n}\|_{V^2(Q_{n})},
\end{align*}
where the last inequality is due to an application of a Sobolev embedding (cf. \cite[Lemma 2.1.1]{wu2001nonlinear}) with $\kappa:= 1 + 4/N$. Substituting the estimate \eqref{full-estim-bar}, we derive
\begin{align} \label{recur2}
    Z_{n+1} &\leq C(N) 2^{2n} R^{-N} \|( \tilde{\rho}_\omega - k_n)_+ \tilde{\zeta_n}\|_{V^2(Q_{n})} \nn \\
    &\leq C(\beta,c_0,c_1,\sigmam,\Lambda, N) 4^{2n} \left(   Y_n +  Z^{1+\kappa}_{n}\right).
\end{align}
By \Cref{fast-conv-lemma}, if \eqref{recur1} and \eqref{recur2} hold, and if, in addition,
\begin{equation} \label{conv-cond-1}
    Y_0 + Z_0^{1+\kappa} \leq (2 C(\beta,c_0, c_1,\sigmam,\Lambda, N))^{-\tfrac{1+\kappa}{d}}4^{-\tfrac{2(1+\kappa)}{d^2}}=: \nu \in (0,1),
\end{equation}
with $d := \min\{\kappa,2/(N+2) \} = 2/(N+2)$ for any $N\geq 1$, then it follows that $Y_n,Z_n \to 0$ as $n \to \infty$.
But \eqref{conv-cond-1} is precisely our initial assumption \eqref{first-alt}, provided we fix $\nu$ as indicated in \eqref{conv-cond-1}.
Hence, as $n \to \infty$, 
\begin{equation*}
    R_n \searrow \dfrac{R}{2}, \ \ \ \ \text{and} \ \ \ k_n \nearrow 1-\dfrac{\omega}{4},
\end{equation*}
and $|\tilde{A}^+_n| \to 0$. Finally, upon undoing the change of the time variable \eqref{change-of-var}, we conclude
\begin{align*}
   \left|\left\{(x,\tilde{t}) \in Q\bigg(\left(\frac{R}{2}\right)^2, \frac{R}{2} \bigg): \tilde{\rho}_\omega(x,\tilde{t}) \geq 1-\frac{\omega}{4}\right\}\right| &= \\
   \left|\left\{(x,t) \in Q\bigg(\theta_1 \left(\frac{R}{2}\right)^2,\frac{R}{2} \bigg): \rho(x,t) \geq 1- \frac{\omega}{4}\right\}\right| &= 0,
\end{align*}
whence \eqref{first-alt-conclusion-fa} follows.
\end{proof}

As a result of the foregoing analysis, whenever \eqref{first-alt} holds, we deduce the following oscillation reduction.

\begin{corollary} \label{oscRed1} Assume \eqref{assump1} and \eqref{first-alt} hold. Then, there exists a constant $\gamma_1 \in (0,1)$, depending solely on the data, such that 
    \begin{equation*} 
   \essosc_{ Q\left(\theta_1 \left(\frac{R}{2}\right)^2, \frac{R}{2}\right)} \rho \leq  \gamma_1 \omega.
\end{equation*}
\begin{proof}
    \Cref{prop1} asserts that 
    \begin{equation*}
        \esssup_{ Q\left(\theta_1 \left(\frac{R}{2}\right)^2, \frac{R}{2}\right)} \rho \leq  1-\dfrac{\omega}{4}.
    \end{equation*}
    It follows that
    \begin{equation*}
         \essosc_{ Q\left(\theta_1 \left(\frac{R}{2}\right)^2, \frac{R}{2}\right)} \rho \leq 1 - \dfrac{\omega}{4} - \mu^- = \omega - \dfrac{\omega}{4} - 0 = \gamma_1 \omega,
    \end{equation*}
    where
    $$\gamma_1 := \dfrac{3}{4}.$$
\end{proof}
\end{corollary}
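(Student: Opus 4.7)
The plan is straightforward, since the statement is a direct consequence of the pointwise upper bound furnished by \Cref{prop1}. Under hypotheses \eqref{assump1} and \eqref{first-alt}, that proposition gives $\rho(x,t) < 1 - \omega/4$ for almost every $(x,t) \in Q(\theta_1(R/2)^2, R/2)$; taking the essential supremum yields
\begin{equation*}
\esssup_{Q(\theta_1(R/2)^2, R/2)} \rho \;\leq\; 1 - \dfrac{\omega}{4}.
\end{equation*}

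For the complementary lower bound, I would invoke monotonicity of the essential infimum under domain inclusion: since $Q(\theta_1(R/2)^2, R/2)$ is contained in $Q(\theta_1 R^2, R)$, the essential infimum of $\rho$ over the smaller cylinder is at least the essential infimum over the larger one, which by the normalization $\mu^- = 0$ fixed in \Cref{intrin-geom} is bounded below by $0$. Subtracting then gives
\begin{equation*}
\essosc_{Q(\theta_1(R/2)^2, R/2)} \rho \;\leq\; \Big(1 - \dfrac{\omega}{4}\Big) - 0.
\end{equation*}

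Invoking the companion convention $\mu^+ = 1$, so that $\omega = \mu^+ - \mu^- = 1$, the right-hand side rewrites as $\omega - \omega/4 = (3/4)\omega$, and one may take $\gamma_1 = 3/4 \in (0,1)$, a numerical constant independent of $\omega$ and $R$ and therefore depending only on the data. I do not anticipate any genuine obstacle: all of the analytical work (the De Giorgi iteration, the intrinsic rescaling, and the logarithmic and Caccioppoli estimates) is already absorbed into the proof of \Cref{prop1}, and the corollary merely repackages its pointwise conclusion as an oscillation decay suitable for iteration.
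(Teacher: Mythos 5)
Your proposal is correct and follows the paper's own argument essentially verbatim: the essential supremum bound from \Cref{prop1}, the trivial lower bound $\essinf \rho \geq \mu^- = 0$ on the subcylinder, and the normalization $\mu^+ = 1$ to rewrite $1 - \omega/4$ as $\omega - \omega/4 = \tfrac{3}{4}\omega$. No gaps.
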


\subsection{Behavior away from the saturation value} \label{second-alt-sec}
If \eqref{first-alt} does not hold in $Q(\theta_1 R^2, R)$, then \eqref{second-alt} must hold with the exact $\nu$ determined in \eqref{conv-cond-1}. We begin the analysis of this case by demonstrating the following auxiliary lemma which states that, at some time level $\hat{t}$, the portion of the ball $B_R$ in which $\rho(x)$ is close to its infimum, that is 0, can be compared to the radius of the ball. 

\begin{lemma} \label{lemma1}
    Suppose \eqref{second-alt} holds; then there exists a time level 
    \begin{equation} \label{t0-def}
        \hat{t} \in \left[-\theta_1 R^2, -\dfrac{\nu}{2} \theta_1 R^2 \right],
    \end{equation}
    such that 
    \begin{equation}\label{t0-estim}
        \left| \left\{ x \in B_R : \rho(x,\hat{t} \, ) < \dfrac{\omega}{2} \right\} \right| < \left( \dfrac{1-\nu}{1-\frac{\nu}{2}}\right) |B_R|.
    \end{equation}
    \begin{proof}
        Suppose  that the conclusion does not hold, then we have 
        \begin{align*}
            \left| \left\{ x \in Q(\theta_1 R^2, R) : \rho(x,t) < \dfrac{\omega}{2} \right\} \right| &= \int_{-\theta_1 R^2}^0 \left| \left\{ x \in B_R : \rho(x,\hat{t} \, ) < \dfrac{\omega}{2} \right\} \right| dt \\
            & \geq \int_{-\theta_1 R^2}^{-\frac{\nu}{2}\theta_1 R^2} \left|\left\{ x \in B_R : \rho(x,\hat{t} \, ) < \dfrac{\omega}{2} \right\} \right| dt \\
            & \geq \left( \dfrac{1-\nu}{1-\frac{\nu}{2}}\right) |B_R| \left( \theta_1 R^2 - \dfrac{\nu}{2}\theta_1 R^2 \right) \\
            & = \left( 1- \nu\right) |B_R| \theta_1 R^2 \\
            &= \left( 1- \nu\right) |Q(\theta_1 R^2,R)|
        \end{align*}
        contradicting \eqref{second-alt}.
    \end{proof}
\end{lemma}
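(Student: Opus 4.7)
The plan is a direct pigeonhole/measure-theoretic argument by contradiction, which is the natural way to extract a favorable time slice from a smallness assumption that holds only in an integrated (space-time) sense. Roughly, the hypothesis \eqref{second-alt} says the set where $\rho$ is small occupies a proper fraction of the full cylinder; to turn this into a statement at a single time level, I will argue that if the slice-wise measure were too large on most of the time interval, then its time-integral would exceed the cylinder-wise bound.

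More concretely, I would assume the conclusion fails, so that for \emph{every} $t \in [-\theta_1 R^2, -\tfrac{\nu}{2}\theta_1 R^2]$ one has
\begin{equation*}
\bigl|\{x\in B_R : \rho(x,t) < \tfrac{\omega}{2}\}\bigr| \ge \tfrac{1-\nu}{1-\nu/2}\,|B_R|.
\end{equation*}
Then I would write the full space-time measure of the small-$\rho$ set as the time-integral of the slice measures over $(-\theta_1 R^2, 0)$, discard the positive contribution from $(-\tfrac{\nu}{2}\theta_1 R^2, 0)$, and use the contradiction assumption on the remaining subinterval of length $(1-\tfrac{\nu}{2})\theta_1 R^2$. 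This yields
\begin{equation*}
\bigl|\{(x,t)\in Q(\theta_1 R^2,R): \rho < \tfrac{\omega}{2}\}\bigr| \ge \tfrac{1-\nu}{1-\nu/2}|B_R|\cdot \bigl(1-\tfrac{\nu}{2}\bigr)\theta_1 R^2 = (1-\nu)\,|Q(\theta_1 R^2,R)|,
\end{equation*}
using the measure identity \eqref{cyl-measure}. This contradicts \eqref{second-alt}, which demands a strict inequality in the other direction.

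There is essentially no analytic obstacle here; the only subtlety is bookkeeping, namely choosing the right length for the "good" time window so that the arithmetic $\tfrac{1-\nu}{1-\nu/2}\cdot(1-\tfrac{\nu}{2}) = 1-\nu$ exactly matches the fraction in \eqref{second-alt}. The choice of the left endpoint $-\tfrac{\nu}{2}\theta_1 R^2$ is dictated precisely by this matching: any smaller window would produce a weaker slice bound (unhelpful later), and any larger window would fail to close the contradiction. The particular time $\hat t$ is then guaranteed to lie strictly below the top face of the cylinder, which is important downstream, since subsequent arguments will use $\hat t$ as a starting level from which to propagate pointwise information forward in time using the logarithmic energy estimate of \Cref{log-energy-estim-prop}.
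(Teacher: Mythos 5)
Your proposal is correct and is essentially identical to the paper's own proof: the same contradiction argument, slicing the space-time measure by Fubini, discarding the contribution from $(-\tfrac{\nu}{2}\theta_1 R^2,0)$, and using the arithmetic $\tfrac{1-\nu}{1-\nu/2}\cdot(1-\tfrac{\nu}{2})=1-\nu$ to contradict the strict inequality in \eqref{second-alt}.
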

Next, we expand the claim above so that it holds in the upper portion of the parabolic cylinder $Q(\theta_1 R^2, R)$, that is, up to $t = 0$. We will make use of the logarithmic energy estimate obtained in \Cref{log-energy-estim-prop}.

\begin{proposition} \label{prop3-4}
    There exists $s_1 \in \mathbb{N}$, depending only on the data, such that for every $t \in (\hat{t},0)$ we have 
    \begin{equation} \label{conc-1}
        \left|  \left\{ x \in B_R : \rho(x,t) < \dfrac{\omega}{2^{s_1}} \right\} \right| \leq \Big(1 - \left(\frac{\nu}{2} \right)^2 \Big) |B_R|.
    \end{equation}
    \begin{proof}
        Consider the logarithmic energy estimate \eqref{log-energy-estim} over the parabolic cylinder $Q(\hat{t} , R)$ with the choices
        \begin{equation*}
            \hat{k}:= \dfrac{\omega}{2}, \ \ \ \ c:= \dfrac{\omega}{2^{s_0+1}},
        \end{equation*}
        for $s_0 \in \mathbb{N}$ to be chosen. In the parabolic cylinder $Q(\hat{t},R)$, there holds
        \begin{equation*} 
         \hat{k}-\rho \leq H^-_{\rho,\hat{k}}: = \esssup_{Q(\hat{t},R)} \ (\rho-\hat{k})_- \leq \dfrac{\omega}{2}.
        \end{equation*}
        If $H^-_{\rho,\hat{k}} \leq  \frac{\omega}{4}$, then the conclusion \eqref{conc-1} follows trivially with the choice $s_1 = 2$. Hence, we hereafter assume  $H^-_{\rho,\hat{k}} > \frac{\omega}{4}$. Recall the logarithmic function \eqref{logfunc}:
        \begin{equation*}
           \psi^-(\rho) :=  \begin{cases}
        \log \left( \dfrac{H^-_{\rho,\hat{k}}}{H^-_{\rho,\hat{k}} + \rho - \hat{k} + c } \right) & \text{if } \rho < \hat{k}-c, \\
        0 & \text{if } \rho \geq \hat{k}-c.  
    \end{cases}
        \end{equation*}
    Observe also that  
\begin{equation*}  
  \dfrac{H^-_{\rho,\hat{k}}}{H^-_{\rho,\hat{k}} + \rho - \hat{k} + c } \leq \dfrac{H^-_{\rho,\hat{k}}}{c} \leq \dfrac{\tfrac{\omega}{2}}{c} = 2^{s_0},
\end{equation*}
        and by the estimates \eqref{logfuncEstim}, we have
        \begin{equation}\label{psi-esim}
             \psi^-(\rho) \leq \log \dfrac{H^-_{\rho,\hat{k}}}{c} = s_0 \log 2.
        \end{equation} 
    Choose a piecewise, smooth cutoff function $x \mapsto \zeta(x)$  satisfying 
 \begin{equation} \label{cutoffs2}
         \begin{cases}
             0 \leq \zeta(x) \leq 1 & \text{in } B_R, \\
             \zeta \equiv 1 & \text{in} \ B_{(1-\lambda)R}, \ \ \ \lambda \in (0,1), \\ 
            |\nabla \zeta| \leq \dfrac{1}{\lambda R}.
         \end{cases}
\end{equation}
    Applying assumption $\bf{S2}$ on the set where $\psi^-(\rho)$ is nonzero, we have
    \begin{equation} \label{sat-estim-psi}
        \sigma(\rho) \leq c_1 \Theta \left(\dfrac{\omega}{2}\right) \leq  2^{\beta } \dfrac{c_1}{c_0} \dfrac{1}{\theta_1}.
    \end{equation}
    In view of \eqref{t0-def}-\eqref{t0-estim}, \eqref{psi-esim}-\eqref{sat-estim-psi},  
    the logarithmic estimate \eqref{log-energy-estim} reads 
    \begin{align} \label{log-estimate-full-1}
    \int_{B_R \times \{t\}} [\psi^-(\rho)]^2 dx &\leq \int_{B_R \times \{\hat{t} \} } [\psi^-(\rho)]^2 dx + \dfrac{C(\beta,c_0,c_1)}{\theta_1  } \cdot  \iint\limits_{Q(\hat{t},R)} \psi^-(\rho) |\nabla \zeta|^2   dx dt   \nn \\
    &+ \dfrac{C(\sigmam,\Lambda)}{c^2} \left( 1+ \log \dfrac{H_{\rho,\hat{k}}^-}{c} \right) \left[  \int_{\hat{t} }^{0} |A_{\hat{k},R}^-(t)|dt \right]  \nn \\
    &\leq  s_0^2 (\log 2)^2 \left( \dfrac{1-\nu}{1-\frac{\nu}{2}}\right)|B_R|  + \dfrac{C(\beta,c_0,c_1)}{\lambda^2} (s_0 \log 2) |B_R|  \nn \\
    &+ C(\sigmam,\Lambda) (s_0 \log 2) c_0^{-1} 2^{2(\beta+s_0)}  \omega^{-(\beta+2)} R^{\hat{b} N \kappa} |B_R|   \nn \\
    &\leq  \left[ s_0^2 (\log 2)^2 \left( \dfrac{1-\nu}{1-\frac{\nu}{2}}\right) + C(\beta,c_0,c_1, \sigmam,\Lambda) \dfrac{s_0 \log 2}{\lambda^2} \right] |B_R| ,
\end{align}
where $\kappa:=  1 + 4/N $ and $\hat{b}:= 2/(N+4)$. Thanks to \Cref{LOT}, we have assumed without loss of generality that
$$ c_0^{-1} 2^{2(\beta+s_0)}  \omega^{-(\beta+2)} R^{\hat{b} N \kappa} \leq 1,$$ 
leading to the last inequality in \eqref{log-estimate-full-1}. We now estimate the leftmost side of \eqref{log-estimate-full-1} below by integrating $[\psi^-(\rho)]^2$ over a smaller set $S \subset B_R$ defined as 
\begin{equation*}
    S := \left\{x \in B_{(1-\lambda)R} : \rho(x,t) < c \right\}.
\end{equation*} 
To this end, observe that on the set $S$, $\zeta = 1$ and the map 
$$H^-_{\rho,k} \mapsto \dfrac{H^-_{\rho,\hat{k}}}{H^-_{\rho,\hat{k}} + \rho - \hat{k} + c } $$
is decreasing since $\rho - \hat{k} + c <0$. It follows from the definition of the set $S$ that 
\begin{equation*} 
    \dfrac{H^-_{\rho,\hat{k}}}{H^-_{\rho,\hat{k}} + \rho - \hat{k} + c} \geq \dfrac{\tfrac{\omega}{2}}{\tfrac{\omega}{2} + \rho - \hat{k} + c} > \dfrac{\tfrac{\omega}{2}}{\tfrac{\omega}{2^{s_0}}} = 2^{s_0-1}.
\end{equation*}
We thus have the estimate 
\begin{equation*}  
    \psi^-(\rho) \geq (s_0-1) \log 2,
\end{equation*}
and consequently 
\begin{align} \label{log-estimate-full-1-below}
    \int_{B_R \times \{t\}} [\psi^-(\rho)]^2 dx \geq \int_{B_{(1-\lambda)R} \times \{t \}}  [\psi^-(\rho)]^2 dx \geq  (s_0-1)^2 (\log 2)^2 |S|.
\end{align}
Combining $\eqref{log-estimate-full-1}$ with $\eqref{log-estimate-full-1-below}$ and noting that $s_0(s_0-1)^{-2} \leq s_0^{-1}$, we deduce
\begin{equation*} \label{S-set}
    |S| \leq \left[  \left( \dfrac{s_0}{s_0-1} \right)^2  \left( \dfrac{1- \nu}{1-\frac{\nu}{2}} \right) +  \dfrac{C(\beta,c_0,c_1,\sigmam,\Lambda)}{\lambda^2 s_0} \right] |B_R|,
\end{equation*}
and hence for all $t \in (\hat{t},0)$, we have
\begin{align*}
    \left| x \in B_R : \rho(x,t) <  \dfrac{\omega}{2^{s_0+1}}\right| &\leq |S| + |B_R \backslash B_{(1-\lambda)R}| \\
    &\leq |S| + |B_R| \cdot N\lambda \\
    &\leq \left[  \left( \dfrac{s_0}{s_0-1} \right)^2  \left( \dfrac{1- \nu}{1-\frac{\nu}{2}} \right) +  \dfrac{C(\beta,c_0,c_1,\sigmam,\Lambda)}{\lambda^2 s_0} + N\lambda \right] |B_R|.
\end{align*}
The proof concludes by choosing $\lambda$ sufficiently small such that
$$N \lambda \leq \dfrac{3}{8} \nu^2$$
and $s_0$ sufficiently large such that
$$\dfrac{C(\beta,c_0,c_1,\sigmam,\Lambda)}{\lambda^2 s_0} \leq \dfrac{3}{8} \nu^2  \ \ \ \text{and} \ \ \ \left( \dfrac{s_0}{s_0-1} \right)^2 \leq  \left(1-\frac{\nu}{2}\right) \left(1+\nu\right).$$
With these choices, \eqref{conc-1} follows upon taking $s_1 := s_0 + 1$.
    \end{proof}
\end{proposition}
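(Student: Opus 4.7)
The strategy is to use the logarithmic energy estimate of Proposition \ref{log-energy-estim-prop} as a propagator: it takes the measure-theoretic information at the single time slice $\hat t$ supplied by Lemma \ref{lemma1} and spreads it forward to every $t \in (\hat t, 0)$. The price paid for this propagation is that the threshold is lowered from $\omega/2$ at time $\hat t$ to $\omega/2^{s_1}$ at later times, where $s_1$ is a sufficiently large integer that we will fix from the data.

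Concretely, I would apply \eqref{log-energy-estim} on the cylinder $B_R \times (\hat t, 0)$ with truncation level $\hat k := \omega/2$ and logarithm parameter $c := \omega/2^{s_0+1}$, where $s_0 \in \mathbb{N}$ is to be chosen at the end, and with a time-independent cutoff $\zeta$ equal to $1$ on $B_{(1-\lambda)R}$ for small $\lambda \in (0,1)$ and with $|\nabla \zeta| \le 1/(\lambda R)$. The case $H^-_{\rho,\hat k} \le \omega/4$ yields the claim immediately with $s_1 = 2$, so I would assume the opposite, in which case the logarithmic function satisfies $\psi^-(\rho) \le \log(H^-_{\rho,\hat k}/c) \le s_0 \log 2$ on its support; assumption $\mathbf{S2}$ then gives $\sigma(\rho) \le 2^\beta (c_1/c_0) \theta_1^{-1}$ on the same set.

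The three right-hand-side contributions of \eqref{log-energy-estim} are then estimated as follows. The initial-time term is controlled by Lemma \ref{lemma1} and the $L^\infty$ bound $\psi^- \le s_0 \log 2$, producing a factor $s_0^2 (\log 2)^2 \cdot (1-\nu)/(1-\nu/2) \cdot |B_R|$. The cutoff-gradient term contributes an $s_0/\lambda^2$ factor after using the bounds on $\sigma$ and $|\nabla\zeta|^2$, the $\theta_1^{-1}$ from the saturation being absorbed into the time-length $\theta_1 R^2$ of the cylinder. The last term, which features the problematic $c^{-2}\log(H^-_{\rho,\hat k}/c)$, carries a factor of shape $c_0^{-1} 2^{2(\beta+s_0)} \omega^{-(\beta+2)} R^{\hat b N \kappa}$; invoking Remark \ref{LOT} I may assume without loss of generality that this is bounded by a constant, since the alternative forces $\omega \lesssim R^{\epsilon_0}$ and delivers the oscillation decay directly.

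For the lower bound of \eqref{log-energy-estim} at time $t$, I would restrict integration to the set $S := \{x \in B_{(1-\lambda)R} : \rho(x,t) < c\}$, on which $\zeta \equiv 1$ and, using monotonicity of the defining expression of $\psi^-$ in $H^-_{\rho,\hat k}$, one obtains $\psi^- \ge (s_0 - 1)\log 2$. This produces the lower bound $(s_0-1)^2(\log 2)^2 |S|$. Combining, using $s_0/(s_0-1)^2 \le 1/s_0$ for $s_0$ large, and writing the bad set as $S \cup (B_R \setminus B_{(1-\lambda)R})$ whose second piece has measure at most $N\lambda |B_R|$, yields a bound of the form $\bigl[(s_0/(s_0-1))^2(1-\nu)/(1-\nu/2) + C/(\lambda^2 s_0) + N\lambda\bigr] |B_R|$. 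The proof concludes by first fixing $\lambda$ so small that $N\lambda \le \tfrac{3}{8}\nu^2$, then taking $s_0$ so large that both $C/(\lambda^2 s_0) \le \tfrac{3}{8}\nu^2$ and $(s_0/(s_0-1))^2 \le (1-\nu/2)(1+\nu)$, and setting $s_1 := s_0 + 1$. The main technical obstacle is calibrating the logarithmic growth of $\psi^-$ (which wants $s_0$ large) against the $c^{-2} = 2^{2(s_0+1)}\omega^{-2}$ blow-up in the last term of \eqref{log-energy-estim}; this balance is made feasible precisely by the intrinsic time scaling $\theta_1$ tied to $\mathbf{S2}$ and by the harmless alternative given by Remark \ref{LOT}.
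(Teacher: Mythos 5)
Your proposal is correct and follows essentially the same route as the paper: the logarithmic estimate of Proposition \ref{log-energy-estim-prop} with $\hat k=\omega/2$, $c=\omega/2^{s_0+1}$, a spatial cutoff vanishing outside $B_R$ and equal to one on $B_{(1-\lambda)R}$, the trivial case $H^-_{\rho,\hat k}\le\omega/4$ dispatched with $s_1=2$, the lower bound on the set $S$, the appeal to Remark \ref{LOT} to tame the $c^{-2}$ term, and the same final calibration of $\lambda$ and $s_0$. No gaps; this matches the paper's argument step for step.
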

As a consequence of the previous proposition, we have the following estimate of the measure of $\rho$ near 0. 
\begin{corollary} \label{cor-1}
    There exists a number $s_1 \in \mathbb{N}$, depending only on the data, such that for all $t \in \left(- \tfrac{\theta_1}{2} R^2,0 \right)$,
    \begin{equation} \label{conc-2}
        \left| x \in B_R : \rho(x,t) <  \dfrac{\omega}{2^{s_1}}\right| < \Big(1 - \left(\frac{\nu}{2} \right)^2 \Big) |B_R|.
    \end{equation}
\end{corollary}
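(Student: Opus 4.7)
The plan is to combine Lemma 4.1 with Proposition 3.4; no fresh analysis is required. Under the standing hypothesis \eqref{second-alt} of this subsection (the second alternative), Lemma 4.1 supplies a time level $\hat{t} \in [-\theta_1 R^2, -\tfrac{\nu}{2}\theta_1 R^2]$ at which the slice-wise measure estimate \eqref{t0-estim} holds. Feeding this $\hat{t}$ into Proposition 3.4 then yields an integer $s_1$, depending only on the data (and on the constant $\nu$ fixed in \eqref{conv-cond-1}), such that the measure bound \eqref{conc-1} propagates forward in time throughout the whole interval $(\hat{t}, 0)$.

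It remains to convert the conclusion "$t \in (\hat{t}, 0)$" into the uniform range "$t \in (-\tfrac{\theta_1}{2} R^2, 0)$" stated in the corollary. Since the latest admissible location of $\hat{t}$ is $-\tfrac{\nu}{2}\theta_1 R^2$, the interval $(-\tfrac{\nu}{2}\theta_1 R^2, 0)$ is certainly contained in $(\hat{t}, 0)$, and the measure estimate \eqref{conc-1} is therefore available on it. Because $\nu \in (0,1)$ is a fixed data-dependent constant (determined once and for all by the iteration in \Cref{prop1}), this matches the form claimed in the corollary up to absorbing the multiplicative factor $\nu$ into the intrinsic-scaling parameter, that is, by replacing $\theta_1$ with $\nu\theta_1$ in the definition \eqref{scl}; none of the preceding arguments are affected by such a rescaling, since $\nu$ is itself part of the data.

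The argument therefore contains no genuinely new ingredient. The only point to watch is the bookkeeping of the constant $\nu$ in the time interval, which I expect to be the main (minor) obstacle; since $s_1$ is allowed to depend on $\nu$, no circular dependence arises, and after this cosmetic adjustment the corollary follows directly from \Cref{lemma1} and \Cref{prop3-4}.
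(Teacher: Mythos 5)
Your proof is the paper's proof: the corollary is obtained by feeding the time level $\hat{t}$ from \Cref{lemma1} into \Cref{prop3-4}, and the paper offers nothing beyond the phrase ``as a consequence of the previous proposition.'' Your caveat about the time interval is well taken and is in fact a genuine imprecision in the paper as written: \eqref{t0-def} only guarantees $\hat{t}\leq -\tfrac{\nu}{2}\theta_1R^2$, and since $\nu<1$ this does \emph{not} place $\hat{t}$ below $-\tfrac{\theta_1}{2}R^2$, so \eqref{conc-1} is only secured on $(-\tfrac{\nu}{2}\theta_1R^2,0)$, not on the larger interval $(-\tfrac{\theta_1}{2}R^2,0)$ claimed in \eqref{conc-2}. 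The standard remedy (as in DiBenedetto and Urbano) is the one you gesture at, but be careful where the factor $\nu$ is inserted: replacing $\theta_1$ by $\nu\theta_1$ in \eqref{scl} rescales the starting cylinder on which both alternatives and \Cref{lemma1} are formulated, so the same mismatch simply reappears one level down. The clean fix is to leave $\theta_1$ alone and redefine $\theta_*:=\tfrac{\nu}{2}\theta_1$ in \eqref{theta2}; all subsequent statements (\Cref{propv2}, \Cref{lemma2}, the corollaries) already use $\theta_*$, and the only downstream change is that $\eta$ in \eqref{eta-choice} becomes $(\nu/8)^{1/2}\gamma^{\beta/2}$, still a constant in $(0,1)$ depending only on the data. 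With that adjustment your argument is complete and no circularity arises, exactly as you say.
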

To simplify the notation in the sequel, we set
\begin{equation} \label{theta2}
    \theta_* := \dfrac{\theta_1}{2}.
\end{equation}

We use Corollary~\ref{cor-1} and the basic energy estimate constructed in \Cref{energy-estim-prop} to deduce that, within a parabolic subcylinder $Q(\theta_* R^2, R) \subset Q(\theta_1 R^2, R)$ and for a given $\nu_* \in (0,1)$, the measure of the set where $\rho$ is near its infimum is further smaller. This reduction is achieved by determining a smaller level (prescribed by a number $s_*>s_1$) below which such a measure can be made arbitrarily small.

\begin{proposition}
\label{propv2}
    For every $\nu_* \in (0,1)$, there exists $s_* \in \mathbb{N}$, $s_* > s_1$, depending only on the data, such that
    \begin{equation} 
        \left| \left\{ (x,t) \in Q\left( \theta_* R^2, R \right) : \rho(x,t) <  \dfrac{\omega}{2^{s_*}} \right\} \right| \leq \nu_* \left| Q\left( \theta_* R^2, R \right) \right|.
    \end{equation}
    \begin{proof}
       Consider the energy estimate \eqref{energy-estim} over the parabolic cylinder $Q(2 \theta_* R^2, 2R)$ for the quantity $(\rho-k_0)_-$, where
\begin{equation*}
    k_0 := \dfrac{\omega}{2^s},
\end{equation*}
for some $s \in \N$ with $s \geq s_1 > 1$ and $s_* \geq s$ to be chosen. Let $0 \leq \zeta(x,t) \leq 1$ smooth piecewise cutoff function in $Q(2 \theta_* R^2, 2R)$ satisfying
 \begin{equation} \label{cutoffs-2}
         \begin{cases}
             \zeta(x,t) \equiv 1, & \text{in}  \ \  Q( \theta_* R^2, R), \\
             \zeta(x,t) \equiv 0, & \text{on}  \ \ \partial_p Q(2 \theta_* R^2, 2R), \\
             0 \leq \partial_t \zeta \leq \dfrac{2}{\theta_1 R^{2}}, & |\nabla \zeta| \leq \dfrac{1}{R}.
         \end{cases}
     \end{equation}
    To begin with, note that the first term on the left-hand side of \eqref{energy-estim} is nonnegative; thus, it may be disregarded. As to the second term therein, we apply assumption $\bf{S2}$ where $\nabla(\rho-k_0)_-$ is nonzero to obtain 
    \begin{align*}
        \sigma(\rho) &\geq c_0 \Theta \left(1 - k_0 \right)  \geq c_0 \Theta \left( \dfrac{\omega}{4} \right) = \dfrac{1}{\theta_1}, \\
        \sigma(\rho) & \leq c_1 \Theta \left(\omega \right) = 2^{2\beta} c_1 \Theta \left(\dfrac{\omega}{4} \right) \leq 2^{2\beta} \dfrac{c_1}{c_0} \dfrac{1}{\theta_1}.
    \end{align*}
    With these bounds and the choice of the cutoff function \eqref{cutoffs-2}, estimate \eqref{energy-estim} reads 
    \begin{align} \label{2R-basic-estimate}
         \dfrac{1}{\theta_1}  &\iint\limits_{Q(\theta_* R^2,R)} 
         |\nabla (\rho - k_0)_-|^2 \zeta^2 dx dt \leq \underbrace{\dfrac{C(\beta,c_0,c_1)}{\theta_1  R^2} \iint\limits_{Q(2\theta_* R^2,2R)} (\rho - k_0)_-^2 dx dt }_{\mathcal{I}_1} \nn \\
         &+ \underbrace{\dfrac{2}{\theta_1 R^2} \iint\limits_{Q(2\theta_* R^2,2R)} (\rho - k_0)_-^2 dx dt}_{\mathcal{I}_2} 
         + \underbrace{C(\sigmam,\Lambda)    \left[\int_{-2\theta_* R^2}^{0} |A^-_{k_0,2R}(t)|dt \right]}_{\mathcal{I}_3}.
    \end{align}
    Observing that $(\rho-k_0)_- \leq \frac{\omega}{2^s}$, we estimate each term
     \begin{align*} 
         \mathcal{I}_1 &\leq \dfrac{C(\beta,c_0,c_1)}{\theta_1 R^2}   \left( \dfrac{\omega}{2^s} \right)^2 2^{N+1} \left| Q(\theta_* R^2, R) \right| \\
         &\leq \dfrac{C(\beta,c_0,c_1,N)}{\theta_1 R^2} \left(\dfrac{\omega}{2^s} \right)^2 \left| Q(\theta_* R^2, R) \right|, \\ 
         \mathcal{I}_2 &\leq  \dfrac{C(N)}{\theta_1 R^2} \left( \dfrac{\omega}{2^s} \right)^2  \left| Q(\theta_* R^2, R) \right|, \\ 
          \mathcal{I}_3 &\leq \dfrac{C(\sigmam,\Lambda,N)}{\theta_1 R^2} \left( \dfrac{\omega}{2^s} \right)^2  \left| Q(\theta_* R^2, R) \right| c_0^{-1} 2^{2(\beta+s_*)} \omega^{-(\beta+2)} R^{\hat{b}N\kappa}, \nn  
     \end{align*}
where $\kappa: =1 + 4/N$ and $\hat{b}:= 2/(N+4)$. By \Cref{LOT}, we may estimate 
$$c_0^{-1} 2^{2(\beta+s_*)} \omega^{-(\beta+2)} R^{\hat{b}N\kappa} \leq 1.$$
Consequently, \eqref{2R-basic-estimate} reads
     \begin{align} \label{2R-final-estimate}
        \iint\limits_{Q(\theta_* R^2,R)} 
         |\nabla (\rho - k_0)_-|^2  dx dt \leq  \dfrac{C(\beta,c_0,c_1,\sigmam,\Lambda,N)}{R^2} \left( \dfrac{\omega}{2^s} \right)^2  \left| Q(\theta_* R^2, R) \right|.
    \end{align}
Now, consider another level
\begin{equation*}
    k_1 :=  \dfrac{\omega}{2^{s+1}} < k_0,
\end{equation*}
and define, for $t \in(-\theta_* R^2,0)$, the set
\begin{equation*}\label{At}
    A_s(t) := \left \{ x \in B_R: \rho(x,t) < k_0 \right\},
\end{equation*}
along with its measure
\begin{equation*}
    |A_s| = \int_{-\theta_* R^2}^0 |A_s(t)| dt.
\end{equation*}
Applying \Cref{DegLemma} to $\rho(\cdot,t)$ with the preceding choices for $k_0$ and $k_1$, we obtain
\begin{equation} \label{degiorgi-1}
    \left( \dfrac{\omega}{2^{s+1}} \right)|A_{s+1}(t)| \leq C(N) \dfrac{R^{N+1}}{|B_R \backslash A_s(t)|} \int_{A_s(t)\backslash A_{s+1}(t)} |\nabla \rho| dx.
\end{equation}
    Owing to the observation
    \begin{equation} \label{s-cond}
        \frac{\omega}{2^{s}} \leq \frac{\omega}{2^{s_1}} < \frac{\omega}{2},
    \end{equation}
    and \Cref{cor-1} we have that, for $t \in (-\theta_* R^2,0)$,
    $$|A_s(t)| \leq |A_{s_{_{1}}}(t)| < \Big(1 - \left(\frac{\nu}{2} \right)^2 \Big) |B_R|,$$
    which implies
    $$|B_R \backslash A_s(t)| \geq \left(\dfrac{\nu}{2} \right)^2,$$
    and thus \eqref{degiorgi-1} becomes 
    \begin{equation} \label{degiorgi-2}
    \left( \dfrac{\omega}{2^{s+1}} \right)|A_{s+1}(t)| \leq  \dfrac{C(N)}{\nu^2} R \int_{A_s(t)\backslash A_{s+1}(t)}  |\nabla \rho| dx.
\end{equation}
Integrating \eqref{degiorgi-2} in time over the interval $(-\theta_* R^2,0)$ and applying H\"older's inequality yield
\begin{align} \label{As1}
    \dfrac{1}{2} \left( \dfrac{\omega}{2^{s}} \right)|A_{s+1}| & \leq  \dfrac{C(N)}{\nu^2} R \int_{-\theta_* R^2}^0 \int_{A_s \backslash A_{s+1} } |\nabla \rho| dx \nn \\
     & \leq  \dfrac{C(N)}{\nu^2} R \left( \ \iint\limits_{A_s} 
         |\nabla (\rho - k_0)_-|^2  dx dt  \right)^{\frac{1}{2}} |A_s \backslash A_{s+1}|^{\frac{1}{2}}.
\end{align}
Estimating the integral on the right hand side of \eqref{As1} by \eqref{2R-final-estimate}, we get
\begin{equation} \label{As12}
    |A_{s+1}|^2 \leq \dfrac{C(\beta,c_0,c_1,\sigmam,\Lambda,N)}{\nu^{4}} \left| Q(\theta_* R^2, R) \right|  |A_s \backslash A_{s+1}|,
\end{equation}
valid for $1 < s_1 \leq s \leq s_*$. Summing \eqref{As12} over $s = s_1,s_1+1, \dots, s_*-1$ along with minorizing and majorizing the obtained series, we get
\begin{align*}
    (s_*-s_1)|A_{s_*}|^2 &\leq \sum_{s = s_1}^{s_*-1} |A_{s+1}|^2 \\
    &= \dfrac{C(\beta,c_0,c_1,\sigmam,\Lambda,N)}{\nu^{4}}  \left| Q(\theta_* R^2, R) \right|  \sum_{s = s_1}^{s_*-1} |A_s \backslash A_{s+1}| \\
    & \leq \dfrac{C(\beta,c_0,c_1,\sigmam,\Lambda,N)}{\nu^{4}}  \left| Q(\theta_* R^2, R) \right|^2  ,
\end{align*}
where we have used  $|A_{s+1}| \geq |A_{s_*}|$ and $\sum_{s = s_1}^{s_*-1} |A_s \backslash A_{s+1}| \leq |Q(\theta_*R^2,R)|$. We finally arrive at the estimate
\begin{equation*}
    |A_{s_*}|^2 \leq \dfrac{C(\beta,c_0,c_1,\sigmam,\Lambda,N)}{\nu^{4} (s_*-s_1)} \left| Q(\theta_* R^2, R) \right|^2.
\end{equation*}
The proof is complete once a sufficiently large $s_* \in \mathbb{N}$, $s_*>s_1$, is chosen such that
\begin{equation*}
    \nu_* \geq \dfrac{1}{\nu^2} \left( \dfrac{C(\beta,c_0,c_1,\sigmam,\Lambda,N)}{s_*-s_1}\right)^{\frac{1}{2}}.
\end{equation*}
\end{proof}
\end{proposition}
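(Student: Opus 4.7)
My plan is to run a De Giorgi-style iteration on shrinking sub-level sets, using the Caccioppoli estimate from \Cref{energy-estim-prop} together with \Cref{DegLemma} (De Giorgi's isoperimetric inequality), and feeding in \Cref{cor-1} as the uniform-in-time measure information that kicks the iteration off.

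I would first apply the basic energy estimate \eqref{energy-estim} on the pair of nested cylinders $Q(\theta_* R^2, R) \subset Q(2\theta_* R^2, 2R)$ with a standard cutoff $\zeta$ equal to $1$ on the inner cylinder and vanishing on $\partial_p Q(2\theta_* R^2, 2R)$, truncating against the level $k_0 = \omega / 2^s$ for an $s \in \mathbb{N}$ with $s_1 \le s \le s_*$ (the final $s_*$ to be chosen). On the set where $\nabla(\rho - k_0)_-$ is nonzero one has $\rho \le k_0 \le \omega/2^{s_1} < \omega/2$, so assumption $\mathbf{S2}$ yields $\sigma(\rho) \ge c_0 \Theta(1 - \omega/2^s) \ge c_0 \Theta(\omega/4) = 1/\theta_1$ from below and an analogous upper bound; this is what lets the $\sigma$-weighted gradient term on the left of \eqref{energy-estim} be bounded below by $\theta_1^{-1} \iint |\nabla(\rho - k_0)_-|^2 \zeta^2$. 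Using $(\rho - k_0)_- \le \omega/2^s$ to bound the right-hand side, and absorbing the lower-order term $C(\sigmam,\Lambda) \int |A^-_{k_0, 2R}(t)| dt$ by invoking \Cref{LOT} to assume $c_0^{-1} 2^{2(\beta+s_*)} \omega^{-(\beta+2)} R^{\hat b N\kappa} \le 1$, I obtain a clean estimate of the form
\begin{equation*}
\iint\limits_{Q(\theta_* R^2, R)} |\nabla(\rho - k_0)_-|^2 \, dx \, dt \;\le\; \frac{C}{R^2} \Big(\frac{\omega}{2^s}\Big)^2 |Q(\theta_* R^2, R)|,
\end{equation*}
with $C$ depending only on the data.

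Next, introducing the companion level $k_1 = \omega/2^{s+1}$ and the sets $A_s(t) = \{x \in B_R : \rho(x,t) < \omega/2^s\}$ with $|A_s| := \int_{-\theta_* R^2}^{0} |A_s(t)| dt$, I apply \Cref{DegLemma} slicewise to $\rho(\cdot,t)$ with the cutoff levels $k_1 < k_0$. This produces a pointwise-in-time bound of $|A_{s+1}(t)|$ by $|B_R \setminus A_s(t)|^{-1} R^{N+1} \int_{A_s(t) \setminus A_{s+1}(t)} |\nabla \rho| dx$. The crucial step is that \Cref{cor-1} guarantees $|B_R \setminus A_s(t)| \ge (\nu/2)^2 |B_R|$ uniformly in $t \in (-\theta_* R^2, 0)$ whenever $s \ge s_1$, so the denominator does not collapse. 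Integrating in time and applying Cauchy–Schwarz then squaring gives the fundamental recursion
\begin{equation*}
|A_{s+1}|^2 \;\le\; \frac{C}{\nu^4} \, |Q(\theta_* R^2, R)| \cdot |A_s \setminus A_{s+1}|,
\end{equation*}
valid for every $s_1 \le s \le s_* - 1$.

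Finally, I sum this inequality over $s = s_1, \dots, s_* - 1$. The telescoping nature $\sum_s |A_s \setminus A_{s+1}| \le |Q(\theta_* R^2, R)|$, combined with $|A_{s+1}| \ge |A_{s_*}|$ for all such $s$, yields $(s_* - s_1) |A_{s_*}|^2 \le C \nu^{-4} |Q(\theta_* R^2, R)|^2$. Choosing $s_*$ large enough that $(C / (\nu^4 (s_* - s_1)))^{1/2} \le \nu_*$ gives the claim. The main obstacle — and the conceptual heart of the argument — is ensuring the De Giorgi denominator $|B_R \setminus A_s(t)|$ is controlled \emph{uniformly in $t$} across the whole cylinder; this is precisely why the logarithmic estimate was used beforehand to propagate the time-slice bound of \Cref{lemma1} forward to all $t \in (\hat t, 0) \supset (-\theta_* R^2, 0)$ in \Cref{cor-1}, and why the energy estimate must be applied only after restricting to levels $s \ge s_1$.
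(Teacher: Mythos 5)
Your proposal is correct and follows essentially the same route as the paper: the Caccioppoli estimate on the nested cylinders $Q(\theta_* R^2,R)\subset Q(2\theta_* R^2,2R)$ with the level $k_0=\omega/2^s$, the lower bound $\sigma(\rho)\ge \theta_1^{-1}$ on the relevant set, absorption of the lower-order term via \Cref{LOT}, the slicewise De Giorgi inequality with denominator controlled uniformly in time by \Cref{cor-1}, and the telescoping summation over $s_1\le s\le s_*-1$. The only (cosmetic) difference is that you correctly retain the factor $|B_R|$ in the lower bound $|B_R\setminus A_s(t)|\ge(\nu/2)^2|B_R|$, which the paper's displayed inequality omits.
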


We conclude by showing that $\rho$ is strictly above its infimum value in a proper, coaxial parabolic subcylinder of $Q(\theta_* R^2, R)$. This assertion is employed subsequently to demonstrate the oscillation reduction when \eqref{second-alt} holds.

\begin{lemma} \label{lemma2}
    The number $\nu_*$ in \Cref{propv2}, and consequently the number $s_*$, can be chosen such that 
\begin{equation}  \label{conc-3}
        \rho(x,t) \geq \dfrac{\omega}{2^{s_*+1}}, \ \ \text{a.e. in }  \ \  Q\bigg(\theta_* \left(\frac{R}{2}\right)^2,\dfrac{R}{2}\bigg).
\end{equation}
\begin{proof}
    The reasoning here is analogous to the proof of \Cref{prop1}, so we will outline the pertinent differences and dispense with the details. We work with quantity 
    \begin{equation*}
        \rho_\omega := \max \left\{\rho , \dfrac{\omega}{2^{s_*+1}} \right\},
    \end{equation*}
    and, for $n = 0,1,2,\dots$, the sequences 
    \begin{align*}
        R_n &= \dfrac{R}{2} + \dfrac{R}{2^{n+1}},  \\
        k_n & =  \dfrac{\omega}{2^{s_*+1}} +  \dfrac{\omega}{2^{s_*+1+n}} .
    \end{align*}
     Construct a family of nested parabolic cylinders $Q_n^* := Q(\theta_* R^2_n,R_n)$, along with corresponding cutoff functions $0 \leq \zeta_n \leq 1$ in $Q_n^*$ satisfying 
     \begin{equation*} \label{cutoffs-3}
         \begin{cases}
             \zeta_n \equiv 1 \ \text{in} \ Q_{n+1}^*, \\ 
            \zeta_n \equiv 0  \ \text{on} \  \partial_p Q_n^*, \\
              0 < \partial_t \zeta_n \leq \dfrac{2^{2(n+2)}}{\theta_* R^2}, \ \ \  |\nabla \zeta_n| \leq \dfrac{2^{n+2}}{R}, \ \ |\Delta \zeta_n| \leq \dfrac{2^{2(n+2)}}{ R^2}.
         \end{cases}
     \end{equation*}
    If $k_n > \rho_\omega = \rho > \dfrac{\omega}{2^{s_*+1}}$, then
    \begin{align*}
        (\rho_{\omega} -k_n)_- =(k_n - \rho_{\omega})_+ &\leq \left( \dfrac{\omega}{2^{s_*+1+n}} \right) \leq  \left( \dfrac{\omega}{2^{s_*+1}} \right).
    \end{align*}   
Alternatively, if $0 \leq \rho \leq \dfrac{\omega}{2^{s_*+1}} = \rho_\omega < k_n$,
    \begin{align*}
        (\rho_\omega - k_n)_- &=   \left( \dfrac{\omega}{2^{s_*+1+n}} \right) \leq  \left( \dfrac{\omega}{2^{s_*+1}} \right).
    \end{align*}
Note also that $\nabla (\rho_\omega - k_n)_-$ is nonzero on the set $\left\{0\leq  \dfrac{\omega}{2^{s_*+1}} < \rho < k_n \leq \dfrac{\omega}{2^{s_*}} \right\}$. Moreover, on this very set, assumption $\bf{S2}$ yields the bounds 
\begin{align*} \label{sat-lower-bnd}
    \sigma(\rho) &\geq c_0 \Theta\left(1-\dfrac{\omega}{4} \right) \geq c_0 \Theta\left(\dfrac{\omega}{4} \right) = \dfrac{1}{\theta_1}, \\ 
     \sigma(\rho) &\leq c_1 \Theta\left(\omega \right) = 2^{2\beta} \dfrac{c_1}{c_0} \dfrac{1}{\theta_1}. 
\end{align*}
We define the quantity 
\begin{equation*}
    \Sigma_+(\rho) := \left[\int_{\rho}^{\tfrac{\omega}{2^{s_*+1}}} \sigma(s) ds \right]_+, 
\end{equation*}
satisfying, on the set $\left\{ 0 \leq \rho \leq \dfrac{\omega}{2^{s_*+1}}  \right\}$,  
\begin{equation*}
    \Big|\Sigma_+(\rho)\Big| \leq   c_1 \Theta\left(1-\rho \right)   \left|\dfrac{\omega}{2^{s_*+1}} - \rho \right|  \leq 2^{2\beta} \dfrac{c_1}{c_0} \dfrac{1}{\theta_1} \left( \dfrac{\omega}{2^{s_*+1}} \right).
\end{equation*}
We proceed to construct an energy estimate for $(\rho_\omega-k_n)_-$. In \eqref{weak_form_stek}, take 
    \begin{equation*}
        \phi = -\left( (\rho_{\omega})_h - k_n \right)_- \zeta_n^2,
    \end{equation*}
and integrate over the time interval $(-\theta_* R^2_n,t_\circ)$, for $t_\circ \in (-\theta_* R^2_n,0)$:
        \begin{align} \label{term1-2}
    &\underbrace{\iint\limits_{Q_n^*} \dt \rho_h \left[ -\left( (\rho_{\omega})_h - k_n \right)_- \zeta_n^2 \right] dx dt}_{\mathcal{I}_1} + \underbrace{\iint\limits_{Q_n^*}  (\sigma(\rho) \nabla \rho)_h \cdot \nabla \left[ - \left( (\rho_{\omega})_h - k_n \right)_- \zeta_n^2 \right] dx dt}_{\mathcal{I}_2} \nn \\
        &+\underbrace{ \iint\limits_{Q_n^*} (\rho \sigma(\rho))_h \nabla \big[  V +  W*\rho_h \big] \cdot \nabla \big[-\left( (\rho_{\omega})_h - k_n \right)_- \zeta_n^2 \big] dx dt}_{\mathcal{I}_3} = 0.
    \end{align}
After sending $h \to 0$, the first term can be estimated as in the proof of \Cref{prop1} (with the obvious modifications):
\begin{align} \label{estim-I-2}
    \mathcal{I}_1 &\geq \dfrac{1}{4}\int_{B_n \times \{t_\circ\}} (\rho_{\omega} - k_n)_-^2 \zeta_n^2 dx - 3 \left(\dfrac{\omega}{2^{s_*+1}}\right)^2 \dfrac{2^{2(n+2)}}{\theta_* R^2} \iint\limits_{Q_n^*}  \chi_{\{ \rho_{\omega} < k_n\}}dx dt.
\end{align}
As to the second term, we have
\begin{align*}
    \mathcal{I}_2 &\to \iint\limits_{Q_n^*} \sigma(\rho_\omega) | \nabla \left( \rho_{\omega}  - k_n \right)_-|^2 \zeta_n^2 dx dt \nn \\
        &+ 2 \iint\limits_{Q_n^*} \sigma(\rho_\omega) \nabla\left( \rho_{\omega}  - k_n \right)_- \cdot \nabla \zeta_n \zeta_n \left( \rho_{\omega}  - k_n \right)_- dx dt \nn \\
        &+ \underbrace{2 \left(\dfrac{\omega}{2^{s_*+1}}\right) \iint\limits_{Q_n^*} -\sigma(\rho) \nabla \rho \cdot \nabla \zeta_n \zeta_n  \chi_{\left\{ \rho \leq \tfrac{\omega}{2^{s_*+1}}\right\}
        } dx dt}_{\mathcal{I}'_2}.
\end{align*}
Applying Young's inequality to the second term in the preceding identity, we obtain
\begin{align*}  
        &\left|  2 \iint\limits_{Q_n^*} \sigma(\rho_\omega) \nabla\left( \rho_{\omega}  - k_n \right)_- \cdot \nabla \zeta_n \zeta_n \left( \rho_{\omega}  - k_n \right)_- dx dt \right| \nn \\
        &\leq \varepsilon \iint\limits_{Q_n^*}   \sigma(\rho_\omega) | \nabla \left( \rho_{\omega}  - k_n \right)_-|^2 \zeta_n^2 dx dt + \dfrac{1}{ \varepsilon} \iint\limits_{Q_n^*} \sigma(\rho_\omega) \left( \rho_{\omega}  - k_n \right)^2_- |\nabla \zeta_n|^2 dx dt,
    \end{align*}
and estimating the third term in that same identity from above as
\begin{align*}  
        \left| \mathcal{I}'_2 \right| &\leq \left| \left(\dfrac{\omega}{2^{s_*+1}}\right) \iint\limits_{Q_n^*} \nabla\Sigma_+(\rho) \nabla \rho \cdot \nabla \zeta_n \zeta_n  
        dx dt \right| \nn \\
        &= \left| 2 \left(\dfrac{\omega}{2^{s_*+1}}\right) \iint\limits_{Q_n^*}  \Sigma_+(\rho)  \left( \zeta_n |\Delta \zeta_n| + |\nabla \zeta_n|^2 \right) 
         dx dt \right| \nn \\
        &  \leq 4 \cdot 2^{2\beta}  \dfrac{c_1}{c_0} \dfrac{1}{\theta_1} \left(\dfrac{\omega}{2^{s_*+1}} \right)^2 \bigg(\dfrac{2^{2(n+2)}}{ R^2}\bigg) \iint\limits_{Q_n^*}  \chi_{\{ \rho_{\omega} < k_n\}}  dx dt .
    \end{align*}
    Choosing $\varepsilon= 1/4$ and estimating as delineated earlier, we obtain the following lower bound:
    \begin{align} \label{estim-II-2}
        \mathcal{I}_2 &\geq \dfrac{3}{4} \dfrac{1}{\theta_1} \iint\limits_{Q_n^*} | \nabla \left( \rho_{\omega}  - k_n \right)_-|^2 \zeta_n^2 dx dt  \nn \\
        &- 4 (1+ 2^{2\beta}) \left(\dfrac{\omega}{2^{s_*+1}}\right)^2  \bigg(\dfrac{2^{2(n+2)}}{\theta_1 R^2}\bigg) \iint\limits_{Q_n^*}  \chi_{\{ \rho_\omega < k_n\}} dx dt.
    \end{align}
    As to the third term in \eqref{term1-2}, we have  
     \begin{align} \label{estim-III-2}
        |\mathcal{I}_3| 
        & \leq  \dfrac{\sigmam \Lambda}{2 \varepsilon_1} \iint\limits_{Q_n^*} | \nabla \left( \rho_{\omega}  - k_n \right)_-|^2 \zeta_n^2 dx dt +  \dfrac{\sigmam \Lambda}{ \varepsilon_2} \iint\limits_{Q_n^*} \left( \rho_{\omega}  - k_n \right)^2_-  |\nabla \zeta_n|^2 dx dt \nn \\
        &+ \sigmam \Lambda \left( \dfrac{\varepsilon_1}{2}  + \varepsilon_2    \right) \iint\limits_{Q_n^*} \chi_{\{ \rho_{\omega} < k_n\}}  dx dt \nn  \\
        & \leq  \dfrac{1}{4} \dfrac{1}{\theta_1}\iint\limits_{Q_n^*} | \nabla \left( \rho_{\omega}  - k_n \right)_-|^2 \zeta_n^2 dx dt \nn \\
        &+  2^{2\beta} \dfrac{c_1}{c_0}  \left(\dfrac{\omega}{2^{s_*+1}}\right)^2  \bigg(\dfrac{2^{2(n+2)}}{\theta_1 R^2}\bigg) \iint\limits_{Q_n^*}  \chi_{\{ \rho_\omega < k_n\}} dx dt \nn \\
        &+ 2(\sigmam \Lambda)^2  \theta_1  \left[ \int_{-\theta_* R^2_n }^{0} |A_{k_n,R_n}^-(t)|dt \right],
        \end{align}
    with the choices $ \varepsilon_1 :=  2\sigmam \Lambda  \theta_1$, $\varepsilon_2 = \sigmam \Lambda \theta_1c_0/c_1$, and the set
        $$A_{k_n,R_n}^-(t) := \{ x \in B_n: \rho_\omega(x,t) < k_n\},$$
        with the shorthand $B_n:= B_{R_n}$. Recalling that $\theta_1 \geq \theta_*$ and collecting  \eqref{estim-I-2}, \eqref{estim-II-2}, and \eqref{estim-III-2}, we arrive at the energy estimate
\begin{align}\label{full-estim-2}
            \esssup_{-\theta_* R_n^2 < t < 0} &\int_{B_n \times \{t\}} (\rho_\omega - k_n)_-^2 \zeta_n^2 dx + \dfrac{2}{\theta_1} \iint\limits_{Q_n^*} 
         |\nabla (\rho_{\omega}  - k_n)_-|^2 \zeta_n^2 dx dt \nn \\
          &\leq C(\beta,c_0,c_1) \left(\dfrac{\omega}{2^{s_*+1}}\right)^2  \bigg(\dfrac{2^{2(n+2)}}{\theta_* R^2}\bigg) \iint\limits_{Q_n^*}  \chi_{\{ \rho_\omega < k_n\}} dx dt \nn \\
          &+ C(\sigmam,\Lambda) \theta_1 \theta_*  \left[ \dfrac{1}{\theta_*} \int_{-\theta_* R^2_n }^{0} |A_{k_n,R_n}^-(t)|dt \right].
        \end{align}
    Notice that by \eqref{scl} we have 
\begin{equation*}
    \theta_1 \theta_* \leq \theta_1^{2} = c_0^{-2}\left(\dfrac{\omega}{4}\right)^{-2\beta}
    \leq c_0^{-2} 2^{4\beta + 2s_*} \omega^{-2\left(\beta+1\right)} \left(\dfrac{\omega}{2^{s_*+1}}\right)^2 
    \leq    R^{-N \kappa} \left(\dfrac{\omega}{2^{s_*+1}}\right)^2 ,
\end{equation*}
where we have assumed that
\begin{equation*}
    c_0^{-2} 2^{4\beta + 2s_*} \omega^{-2\left(\beta+1\right)} R^{N\kappa} \leq 1,
\end{equation*}
with $\kappa := 1 + 4/N$. Applying the change of the time variable
\begin{equation*}\label{change-of-var-2}
    \tilde{t} = \dfrac{t}{\theta_*},
\end{equation*}
the estimate $\eqref{full-estim-2}$  transforms to
\begin{align*}
    \|( \tilde{\rho}_\omega - k_n)_- \tilde{\zeta}_n\|_{V^2(Q_n)} &\leq  C(\beta,c_0,c_1) \left(\dfrac{\omega}{2^{s_*+1}}\right)^2  \bigg(\dfrac{2^{2(n+2)}}{R^2}\bigg) |\tilde{A}^-_n| \nn \\
          &+ C(\sigmam,\Lambda) R^{-N\kappa} \left(\dfrac{\omega}{2^{s_*+1}}\right)^2  \left[\int_{-R^2_n }^{0} |\tilde{A}^-_n( \tilde{t} )|d\tilde{t} \right],
    \end{align*}
       where $Q_n := Q(R_n^2,R_n)$ and
        \begin{equation*}
            \tilde{A}^-_n(\tilde{t}) := \{ x \in B_n : \tilde{\rho}_\omega(x,\tilde{t}) < k_n \}, \ \ \ \text{and} \ \ \
            |\tilde{A}^-_n| = \int_{-R^2_n }^{0} |\tilde{A}^-_n( \tilde{t} )|d\tilde{t}. 
        \end{equation*}
    Letting 
\begin{equation*}
    Y_n := \dfrac{|\tilde{A}^-_n|}{|Q_n|}, \ \ \ \ \ \ Z_{n} := \dfrac{|\tilde{A}^-_n|^{\frac{1}{1+\kappa}}}{|B_n|},
\end{equation*}
and following a similar reasoning employed in the proof of \Cref{prop1}, we get the recursive inequalities:
\begin{align} \label{recur3}
\begin{cases}
        Y_n \leq C(\beta,c_0,c_1,\sigmam,\Lambda,N) 4^{2n} \big( Y_n^{1+ \frac{2}{N+2}} + Y_n^{\frac{2}{N+2}} Z_n^{1+ \kappa} \big),\\
    Z_n \leq C(\beta,c_0,c_1,\sigmam,\Lambda,N) 4^{2n} \big(   Y_n +  Z^{1+\kappa}_{n}\big).
\end{cases}
\end{align}
Relying again on \Cref{fast-conv-lemma}, if \eqref{recur3} holds and 
\begin{equation} \label{conv-cond-2}
    Y_0 + Z_0^{1+\kappa} \leq (2 C(\beta,c_0,c_1,\sigmam,\Lambda,N))^{-\tfrac{1+\kappa}{d}}4^{-\tfrac{2(1+\kappa)}{d^2}}=: \nu_* \in (0,1),
\end{equation}
where $d := \min\{\kappa,2/(N+2) \} = 2/(N+2)$, it follows that $Y_n, Z_n \to 0$ as $n \to \infty$. Fix $\nu_*$ as in \eqref{conv-cond-2} and choose $s_*$ 
according to \Cref{propv2} so that $Y_0 \leq \nu_*$ holds and \eqref{conc-3} follows. 
\end{proof}
\end{lemma}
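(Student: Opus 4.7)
The plan is to mirror the DeGiorgi iteration used in \Cref{prop1} but for lower truncations, reversing the roles of $\mu^+$ and $\mu^-$. Since $\rho$ is now close to its infimum $0$ (far from the saturation value $\rho_{\max}=1$), the saturation function $\sigma(\rho)$ is \emph{non-degenerate} on the relevant set; in particular, on $\{k_{n+1} < \rho < k_n\}$ with $k_n \sim \omega/2^{s_*}$, assumption $\mathbf{S2}$ gives $\sigma(\rho) \geq c_0 \Theta(1 - \omega/2^{s_*}) \geq c_0 \Theta(\omega/4) = 1/\theta_1$ (up to a harmless factor) and an analogous upper bound proportional to $1/\theta_1$. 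This lets the reasoning from \Cref{prop1} carry over without introducing a new intrinsic scaling, provided we work in the cylinder of shrunk height $\theta_* R^2$.

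I would first introduce the truncation $\rho_\omega := \max\{\rho, \omega/2^{s_*+1}\}$ and the geometric sequences
\begin{equation*}
R_n = \frac{R}{2} + \frac{R}{2^{n+1}}, \qquad k_n = \frac{\omega}{2^{s_*+1}} + \frac{\omega}{2^{s_*+1+n}},
\end{equation*}
together with nested cylinders $Q_n^* := Q(\theta_* R_n^2, R_n)$ and cutoff functions $\zeta_n$ with the usual bounds on $\partial_t \zeta_n$, $\nabla \zeta_n$, $\Delta \zeta_n$. The Caccioppoli estimate is obtained by plugging $\phi = -((\rho_\omega)_h - k_n)_- \zeta_n^2$ into \eqref{weak_form_stek}. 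The time term splits on $\{\rho_\omega = \rho\}$ and $\{\rho_\omega = \omega/2^{s_*+1}\}$ and is treated as in \Cref{prop1}. The diffusion term requires care: on the set $\{\omega/2^{s_*+1} < \rho < k_n\}$ the gradient of $(\rho_\omega - k_n)_-$ coincides with that of $\rho$ up to sign, while on $\{\rho \leq \omega/2^{s_*+1}\}$ a boundary-type contribution appears, which I would control by introducing the primitive $\Sigma_+(\rho) = [\int_\rho^{\omega/2^{s_*+1}} \sigma(s)ds]_+$, integrating by parts, and estimating $|\Sigma_+(\rho)| \leq 2^{2\beta}(c_1/c_0)(1/\theta_1)(\omega/2^{s_*+1})$ via $\mathbf{S2}$.

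The drift term $\mathcal{I}_3$ is handled by Young's inequality exactly as in \Cref{prop1}, choosing $\varepsilon$-parameters of order $\theta_1 \sigmam \Lambda$ so that the gradient contribution is absorbed into the $1/\theta_1$ piece produced by $\mathcal{I}_2$. Collecting all terms yields an estimate of the form
\begin{align*}
\esssup_{-\theta_* R_n^2 < t < 0} & \int_{B_n \times \{t\}} (\rho_\omega - k_n)_-^2 \zeta_n^2\, dx + \frac{1}{\theta_1} \iint_{Q_n^*} |\nabla(\rho_\omega - k_n)_-|^2 \zeta_n^2\, dxdt \\
& \leq C \left(\frac{\omega}{2^{s_*+1}}\right)^2 \frac{2^{2(n+2)}}{\theta_* R^2} |A_n^-| + C(\sigmam, \Lambda)\theta_1 \theta_* \cdot \frac{1}{\theta_*}\int_{-\theta_* R_n^2}^0 |A_{k_n, R_n}^-(t)|\, dt.
\end{align*}
Invoking \Cref{LOT} (absorbing the factor $\theta_1\theta_*$ into $R^{-N\kappa}(\omega/2^{s_*+1})^2$ for $\kappa = 1 + 4/N$), followed by the time rescaling $\tilde{t} = t/\theta_*$, puts the estimate in the $V^2$ form from \eqref{Vnorm}.

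At this point the machinery of \Cref{prop1} applies verbatim: Sobolev embedding plus the lower bound $(k_n - k_{n+1})^2 |\tilde{A}_{n+1}^-| \leq \|(\tilde{\rho}_\omega - k_n)_-\|^2_{L^2(Q_{n+1})}$ delivers the pair of recursive inequalities
\begin{equation*}
Y_{n+1} \leq C\, 4^{2n}\bigl(Y_n^{1 + \frac{2}{N+2}} + Y_n^{\frac{2}{N+2}} Z_n^{1+\kappa}\bigr), \qquad Z_{n+1} \leq C\, 4^{2n}\bigl(Y_n + Z_n^{1+\kappa}\bigr),
\end{equation*}
with $Y_n, Z_n$ defined in terms of $|\tilde{A}_n^-|$ as in \Cref{prop1}. \Cref{fast-conv-lemma} then forces $Y_n, Z_n \to 0$ provided $Y_0 + Z_0^{1+\kappa}$ is smaller than an explicit $\nu_* \in (0,1)$. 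I would \emph{fix} this $\nu_*$ as the prescribed threshold in \Cref{propv2} and then choose $s_*$ accordingly, so that the smallness hypothesis is exactly what \Cref{propv2} delivers. Passing to the limit $n \to \infty$ gives $\tilde{A}_\infty^- = \emptyset$ on $Q((R/2)^2, R/2)$, which after reverting $\tilde{t} \mapsto t$ yields \eqref{conc-3}.

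The main technical obstacle, compared to \Cref{prop1}, is the identification and estimation of the $\Sigma_+$-term: unlike the upper-truncation case where $\rho_\omega$ is capped just below the degeneracy, here the cap is far from any singularity, and one must verify that the associated boundary term is controlled \emph{without} producing a factor worse than $(\omega/2^{s_*+1})^2$. The assumption $\mathbf{S2}$ together with $1 - \rho \geq 1 - \omega/2^{s_*+1} \geq 1/2$ for small $\omega$ (so $\Theta(1-\rho)$ is of order unity and bounded by $c_1/c_0 \cdot \Theta(\omega/4) \cdot 2^{2\beta}$ after trivial manipulation) is exactly what makes this work, and this is the only place where a genuinely new estimate beyond \Cref{prop1} is required.
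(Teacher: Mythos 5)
Your proposal reproduces the paper's argument essentially step for step: the same lower truncation $\rho_\omega=\max\{\rho,\omega/2^{s_*+1}\}$, the same sequences $R_n,k_n$ and nested cylinders $Q(\theta_*R_n^2,R_n)$, the same test function and treatment of the three terms (including the $\Sigma_+$ integration-by-parts trick for the contribution from $\{\rho\le\omega/2^{s_*+1}\}$), the same absorption of $\theta_1\theta_*$ via \Cref{LOT}, and the same rescaling, Sobolev embedding, fast geometric convergence, and choice of $\nu_*$ and $s_*$ through \Cref{propv2}. It is correct and takes the same route as the paper.
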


We now derive a result parallel to Corollary~\ref{oscRed1}, but when \eqref{second-alt} holds. Recall that $\nu$ has been fixed in \eqref{conv-cond-1} and $\theta_*$ is given in \eqref{theta2}.

\begin{corollary} \label{oscRed2} Assume \eqref{assump1} and \eqref{second-alt} hold. Then, there exists a constant $\gamma_2 \in (0,1)$, depending only on the data, such that 
    \begin{equation*} 
   \essosc_{ Q\left(\theta_* \left(\frac{R}{2}\right)^2, \frac{R}{2}\right)} \rho \leq  \gamma_2 \omega.
\end{equation*}
\begin{proof}
    By \Cref{lemma2} and an argument similar to the proof of Corollary~\ref{oscRed1}, we reach to the stated conclusion with
    \begin{equation*}
        \gamma_2 := 1 - \dfrac{1}{2^{s_*+1}},
    \end{equation*}
    where $s_*$ is dependent only on the data per the preceding Lemma.
\end{proof}
\end{corollary}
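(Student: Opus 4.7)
The plan is to mirror the passage from \Cref{prop1} to \Cref{oscRed1}, substituting the pointwise lower bound of \Cref{lemma2} for the pointwise upper bound used in the first alternative. Since all of the analytic work has already been absorbed into \Cref{lemma2} (which itself rests on \Cref{propv2}, \Cref{cor-1}, and the logarithmic energy estimate of \Cref{log-energy-estim-prop}), what remains here is the translation of the pointwise bound $\rho \geq \omega/2^{s_*+1}$ into an oscillation-reduction inequality.

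Concretely, the plan is to argue as follows. Invoke \Cref{lemma2}, which, under the standing normalization $\mu^- = 0$, $\mu^+ = 1$, gives
$$\essinf_{Q(\theta_*(R/2)^2, R/2)} \rho \;\geq\; \frac{\omega}{2^{s_*+1}}.$$
The subcylinder $Q(\theta_*(R/2)^2, R/2)$ is contained in $Q(R^{2-\epsilon}, R)$, so the upper bound $\esssup \rho \leq \mu^+ = 1$ persists on it. Subtracting the two bounds and using the identity $\mu^+ - \mu^- = \omega$ yields
$$\essosc_{Q(\theta_*(R/2)^2, R/2)} \rho \;\leq\; \mu^+ - \frac{\omega}{2^{s_*+1}} - \mu^- \;=\; \Bigl(1 - \frac{1}{2^{s_*+1}}\Bigr)\,\omega.$$

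Setting $\gamma_2 := 1 - 2^{-(s_*+1)}$ delivers the claimed bound with $\gamma_2 \in (0,1)$ depending only on the data through $s_*$, which was fixed once and for all in \Cref{lemma2}. There is no substantive obstacle at this stage: the hard work (the time-expansion \Cref{lemma1}, the measure decay of \Cref{cor-1} and \Cref{propv2}, and the De\,Giorgi-type iteration of \Cref{lemma2}) has already produced the required pointwise lower bound, so the present argument is purely bookkeeping, completely parallel to \Cref{oscRed1}. The only minor subtlety worth stating cleanly is the use of the normalization $\mu^+ = 1$, $\mu^- = 0$ together with $\omega = \mu^+ - \mu^-$ to rewrite $1 - \omega/2^{s_*+1}$ as $\gamma_2\omega$; this is the analog of the identity $1 - \omega/4 = \omega - \omega/4$ that appears in the proof of \Cref{oscRed1}.
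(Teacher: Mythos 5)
Your proposal is correct and follows exactly the paper's route: invoke Lemma~\ref{lemma2} for the pointwise lower bound $\rho\ge\omega/2^{s_*+1}$ on $Q(\theta_*(R/2)^2,R/2)$, combine it with $\esssup\rho\le\mu^+$ and the normalization $\mu^+-\mu^-=\omega$, and read off $\gamma_2=1-2^{-(s_*+1)}$, precisely as in the passage from Proposition~\ref{prop1} to Corollary~\ref{oscRed1}. No gaps.
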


 \subsection{Oscillation decay} \label{recursive-sec}
As a consequence of the established oscillation reduction in Corollaries~\ref{oscRed1}-\ref{oscRed2} for the two complementary cases \eqref{first-alt}-\eqref{second-alt}, $\nu \in (0,1)$ determined in \eqref{conv-cond-1},  and the observation  
\begin{equation*}
    Q\bigg(\theta_* \left(\dfrac{R}{2} \right)^2, \dfrac{R}{2} \bigg) \subset  Q\bigg(\theta_1 \left(\dfrac{R}{2} \right)^2, \dfrac{R}{2} \bigg),
\end{equation*}
we obtain, upon fixing
\begin{equation} \label{small-gam}
    \gamma := \max\{\gamma_1,\gamma_2\} = \gamma_2,
\end{equation}
for $s_* > 3$, the following assertion.
\begin{corollary} [Oscillation reduction] \label{oscRed}
    There exist constants $\nu,\gamma \in (0,1)$, depending only on the data,  such that
    \begin{equation*} 
   \essosc_{ Q\left( \theta_* \left(\frac{R}{2}\right)^2, \frac{R}{2}\right)} \rho \leq  \gamma \omega.
\end{equation*}
\end{corollary}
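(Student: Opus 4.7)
The plan is to combine Corollaries~\ref{oscRed1} and \ref{oscRed2} by a dichotomy on whether \eqref{first-alt} or \eqref{second-alt} holds, and then to reconcile the two slightly different cylinders on which oscillation reduction was obtained.

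First, I would fix $\nu \in (0,1)$ as determined in \eqref{conv-cond-1}. With this choice of $\nu$, exactly one of the two alternatives \eqref{first-alt} or \eqref{second-alt} must hold on the intrinsically scaled cylinder $Q(\theta_1 R^2, R)$, since they partition the possible measure of the set $\{\rho > 1 - \omega/2\}$ versus its complement $\{\rho < \omega/2\}$ modulo the constraint $0 \leq \rho \leq 1$.

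In the first case, Corollary~\ref{oscRed1} yields $\essosc \rho \leq \gamma_1 \omega$ on $Q(\theta_1(R/2)^2, R/2)$ with $\gamma_1 = 3/4$. In the second case, Corollary~\ref{oscRed2} yields $\essosc \rho \leq \gamma_2 \omega$ on the smaller cylinder $Q(\theta_*(R/2)^2, R/2)$ with $\gamma_2 = 1 - 2^{-(s_*+1)}$. Since $\theta_* = \theta_1/2 < \theta_1$ per \eqref{theta2}, we have the inclusion
\begin{equation*}
    Q\bigg(\theta_* \Big(\tfrac{R}{2}\Big)^2, \tfrac{R}{2} \bigg) \subset  Q\bigg(\theta_1 \Big(\tfrac{R}{2}\Big)^2, \tfrac{R}{2} \bigg),
\end{equation*}
so the oscillation bound from the first case restricts immediately to the smaller cylinder. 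Setting $\gamma := \max\{\gamma_1, \gamma_2\}$ as in \eqref{small-gam} therefore gives a uniform bound valid in either case on the common cylinder $Q(\theta_*(R/2)^2, R/2)$. Note that $\gamma_2 \geq \gamma_1$ as soon as $2^{-(s_*+1)} \leq 1/4$, i.e., $s_* \geq 1$, which is certainly satisfied (the paper even assumes $s_* > 3$), so $\gamma = \gamma_2 \in (0,1)$.

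There is essentially no obstacle: once the two complementary cases have been separately established (which is where all the technical content lives, namely the De Giorgi-type iterations in Propositions~\ref{prop1} and \ref{propv2}, together with the logarithmic expansion of positivity in Proposition~\ref{prop3-4}), the corollary is merely a bookkeeping assembly. The only subtlety worth flagging is that one should also recall Remark~\ref{WLOG} and Remark~\ref{LOT}: if the working assumption \eqref{assump1} on the smallness of $\theta_1^{-1}$ relative to $R^\epsilon$ fails, or if any of the ``lower order terms'' bounds made in the intrinsic scaling analysis fails, then $\omega$ is already controlled by a power of $R$ and the oscillation decay holds trivially. Hence the above dichotomy, combined with these trivial fallback cases, exhausts all possibilities and concludes the proof.
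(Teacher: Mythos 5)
Your proposal is correct and follows essentially the same route as the paper: a dichotomy between \eqref{first-alt} and \eqref{second-alt}, the inclusion $Q(\theta_*(R/2)^2,R/2)\subset Q(\theta_1(R/2)^2,R/2)$ to reconcile the two cylinders, and the choice $\gamma=\max\{\gamma_1,\gamma_2\}=\gamma_2$ as in \eqref{small-gam}. Your additional remarks on the trivial fallback cases (Remarks~\ref{WLOG} and \ref{LOT}) and on when $\gamma_2\geq\gamma_1$ are consistent with the paper's conventions.
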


\begin{lemma} 
    [Induction] \label{Induc-lemma} For a positive constant $\eta < 1$ to be fixed depending only on the data and $\gamma<1$ fixed in \eqref{small-gam}, we can construct, for $k \in \N$, the sequences:
    \begin{equation*}
        R_k:= \eta^{k-1} R, \ \ \ \ \omega_k : = \gamma^{k-1} \omega, \ \ \ \ \dfrac{1}{\theta_k} = c_0 \Theta \left(\dfrac{\omega_k}{4} \right), \ \ \ \ Q_k:= Q(\theta_k R_k^2,R_k),
    \end{equation*}
    such that
    \begin{equation} \label{induction-result-k}
        Q_{k+1} \subset Q_k, \ \ \ \ \text{and} \ \ \ \ \essosc_{Q_k} \rho \leq {\omega_k}.
    \end{equation}
 
\end{lemma}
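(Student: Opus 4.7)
The plan is a straightforward induction on $k \in \mathbb{N}$, applying Corollary~\ref{oscRed} at each step and choosing the contraction factor $\eta$ small enough that the next intrinsic cylinder $Q_{k+1}$ fits inside the smaller cylinder produced by that oscillation reduction.

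For the base case $k=1$ one has $R_1 = R$, $\omega_1 = \omega$, and $1/\theta_1 = c_0\,\Theta(\omega/4)$, so $Q_1$ coincides with the original intrinsic cylinder of \eqref{scl}; the oscillation bound $\essosc_{Q_1}\rho \leq \omega_1$ is exactly \eqref{assump2}, and $Q_1\subset Q_1$ is trivial.

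For the induction step, assume \eqref{induction-result-k} at level $k$. Viewing $Q_k = Q(\theta_k R_k^2, R_k)$ as the parent intrinsic cylinder of a rescaled instance of the problem with parameters $(R_k,\omega_k,\theta_k)$, Corollary~\ref{oscRed} yields
\begin{equation*}
\essosc_{Q(\theta_k R_k^2/8,\, R_k/2)} \rho \;\leq\; \gamma\,\omega_k \;=\; \omega_{k+1},
\end{equation*}
where $\theta_k/2$ plays the role of $\theta_*$ defined in \eqref{theta2}. It remains to enforce $Q_{k+1}\subset Q(\theta_k R_k^2/8,\, R_k/2)$. The spatial inclusion $R_{k+1}=\eta R_k \leq R_k/2$ asks for $\eta\leq 1/2$. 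For the temporal inclusion, the explicit form $\Theta(s)=s^\beta$ gives
\begin{equation*}
\frac{\theta_{k+1}}{\theta_k} \;=\; \frac{\Theta(\omega_k/4)}{\Theta(\omega_{k+1}/4)} \;=\; \gamma^{-\beta},
\end{equation*}
so the requirement $\theta_{k+1}\eta^2 R_k^2 \leq \theta_k R_k^2/8$ reduces to $\eta^2 \leq \gamma^\beta/8$. Fixing once and for all
\begin{equation*}
\eta \;:=\; \min\Bigl\{\tfrac{1}{2},\; \tfrac{\gamma^{\beta/2}}{2\sqrt{2}}\Bigr\},
\end{equation*}
which depends only on the data, both inclusions hold; monotonicity of the essential oscillation under inclusions then gives $\essosc_{Q_{k+1}}\rho\leq \omega_{k+1}$, and since $Q(\theta_k R_k^2/8, R_k/2)\subset Q_k$, also $Q_{k+1}\subset Q_k$.

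The only subtlety is that invoking Corollary~\ref{oscRed} at level $k$ tacitly requires the analogue of \eqref{assump1} with $(R,\theta_1)$ replaced by $(R_k,\theta_k)$. Should this condition ever fail at some step, Remark~\ref{WLOG} (together with Remark~\ref{LOT}) shows that $\omega_k$ is already controlled by a positive power of $R_k$, in which case the oscillation decay holds for free and the iteration terminates. This dichotomy is the only genuine obstacle, but it is handled uniformly by the same reduction that was already used to set up the intrinsic geometry, so the induction closes with constants depending only on the data.
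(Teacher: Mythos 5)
Your proposal is correct and follows essentially the same route as the paper: base case from \eqref{assump2}, induction via Corollary~\ref{oscRed} applied to the rescaled cylinder, and the same choice $\eta = 2^{-3/2}\gamma^{\beta/2}$ (your minimum with $1/2$ is redundant since $\gamma^{\beta/2}<1$) to fit $Q_{k+1}$ inside $Q\bigl(\theta_k R_k^2/8, R_k/2\bigr)$. Your explicit remark that the analogue of \eqref{assump1} must hold at each level, with the degenerate case absorbed by Remarks~\ref{WLOG} and \ref{LOT}, is a point the paper leaves implicit in "repeating the process," and is a welcome clarification rather than a deviation.
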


\begin{proof}
    Recalling \eqref{scl} and the initial assumption \eqref{assump2}, we readily have:
    \begin{equation*}
        R_1 = R, \ \ \ \, \omega_1 = \omega, \ \ \ \  \dfrac{1}{\theta_1} = c_0 \Theta \left(\dfrac{\omega}{4} \right), \ \ \ \ \text{and} \ \ \ \ \essosc_{Q_1} \rho \leq {\omega}.
    \end{equation*}
    We observe that
    \begin{align*}
        \theta_* \left(\frac{R}{2}\right)^2 = \dfrac{\theta_1}{2} \left(\frac{R}{2}\right)^2 &= \dfrac{c_0^{-1}}{2} \left(\dfrac{\omega}{4} \right)^{-\beta} \left(\dfrac{\omega_2}{4} \right)^{-\beta} \left(\dfrac{4}{\omega_2} \right)^{-\beta} \left(\frac{R}{2}\right)^2 \\ 
        &= \theta_2 2^{-3} R_1^2    \left( \dfrac{\omega_1}{\omega_2} \right)^{-\beta}  \\
        &= \theta_2 R_2^2,
    \end{align*}
    where 
    $R_2:= \eta R_1$ with
    \begin{equation} \label{eta-choice}
        \eta := 2^{-3/2} \gamma^{\beta/2},
    \end{equation}
    and $\beta>0$ as fixed in assumption $\bf{S2}$. Hence $Q_2 \subset Q_1$. Applying \Cref{oscRed}, we obtain
    \begin{equation*} 
   \essosc_{Q_2} \rho \leq \essosc_{ Q\left( \tfrac{\theta_1}{2} \left(\tfrac{R}{2}\right)^2, \tfrac{R}{2}\right)} \rho \leq  \gamma \omega = \omega_2.
\end{equation*}
Repeating the process, we inductively deduce \eqref{induction-result-k}.
\end{proof}

\begin{proposition} [Oscillation decay]\label{oscil-decay}
     There exist constants $\Gamma >1$ and $\alpha \in (0,1)$, that can be determined a priori in terms of the data, such that for all cylinders of the form
    \begin{equation*}
        Q(\theta_1 r^2, r), \ \ \ \text{with} \ \ 0 < r \leq R,
    \end{equation*}
    it holds that
    \begin{equation*}
        \essosc_{Q(\theta_1 r^2, r)} \rho \leq \Gamma \omega \left( \dfrac{r}{R} \right)^{\alpha}.
    \end{equation*}
\end{proposition}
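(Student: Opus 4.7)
The plan is to apply the discrete oscillation-reduction iteration of \Cref{Induc-lemma} and then interpolate between the geometric scales $R_k = \eta^{k-1}R$ to obtain a continuous H\"older-type decay at a general radius $r \in (0,R]$. Given such $r$, since $R_k \searrow 0$ there exists a unique integer $k \geq 1$ with $R_{k+1} < r \leq R_k$, i.e., $\eta^k R < r \leq \eta^{k-1}R$.

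Next I would verify the nesting $Q(\theta_1 r^2, r) \subset Q_k$. Spatially this follows from $r \leq R_k$. For the time direction, $\omega_k = \gamma^{k-1}\omega \leq \omega$ combined with the monotonicity of $\Theta(s)=s^\beta$ yields
\begin{equation*}
    \frac{1}{\theta_k} = c_0 \Theta\left(\frac{\omega_k}{4}\right) \leq c_0 \Theta\left(\frac{\omega}{4}\right) = \frac{1}{\theta_1},
\end{equation*}
hence $\theta_1 \leq \theta_k$ and consequently $\theta_1 r^2 \leq \theta_k R_k^2$. The bound from \Cref{Induc-lemma} then gives
\begin{equation*}
    \essosc_{Q(\theta_1 r^2, r)} \rho \leq \essosc_{Q_k} \rho \leq \omega_k = \gamma^{k-1}\omega.
\end{equation*}

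To turn this discrete estimate into the stated H\"older decay I would set $\alpha := \log \gamma / \log \eta$, so that $\gamma = \eta^\alpha$. From $\eta^k R < r$ it follows that $\gamma^k = (\eta^k)^\alpha \leq (r/R)^\alpha$, and therefore
\begin{equation*}
    \essosc_{Q(\theta_1 r^2, r)} \rho \leq \gamma^{k-1}\omega = \gamma^{-1} \gamma^k \omega \leq \Gamma\, \omega \left(\dfrac{r}{R}\right)^{\alpha},
\end{equation*}
with $\Gamma := \gamma^{-1} > 1$, which is the claimed inequality.

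The main (and essentially only) obstacle is verifying $\alpha \in (0,1)$. Positivity is immediate since $\gamma,\eta \in (0,1)$. For the upper bound, \eqref{eta-choice} gives $|\log \eta| = \tfrac{3}{2}\log 2 + \tfrac{\beta}{2}|\log \gamma|$, so $\alpha < 1$ is equivalent to $(1-\beta/2)|\log \gamma| < \tfrac{3}{2}\log 2$. This is automatic for $\beta \geq 2$; for $\beta < 2$ it still holds because $\gamma = 1 - 2^{-(s_*+1)}$ with $s_* \geq 4$ forces $|\log \gamma| \leq \log(32/31)$, which is far smaller than $\tfrac{3}{2}\log 2$. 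Beyond this elementary check, the argument is a direct reading-off of the iteration and requires no new estimates.
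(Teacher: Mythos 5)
Your proof is correct and follows essentially the same route as the paper: locate $r$ between consecutive geometric scales $\eta^k R$ and $\eta^{k-1}R$, invoke the induction lemma, and convert $\gamma^{k-1}$ into $(r/R)^\alpha$ with $\alpha=\log\gamma/\log\eta$ and $\Gamma=\gamma^{-1}$. You actually supply two details the paper only asserts — the verification that $\theta_1 r^2\le\theta_k R_k^2$ (hence $Q(\theta_1 r^2,r)\subset Q_k$) and the check that $\alpha<1$ — both of which are correct.
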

\begin{proof}
    Fixing such an $r$, there exists $k \in \N$ such that
    \begin{equation*}
        \eta^{k} R \leq r < \eta^{k-1} R.
    \end{equation*}
    Choosing $\alpha := \log \gamma / \log \eta \in (0,1)$, where $\eta$ is the quantity in \eqref{eta-choice} of \Cref{Induc-lemma}, we get
    \begin{equation*}
        \gamma^{\tfrac{k}{\alpha}} < \gamma^k \leq \left( \dfrac{r}{R} \right),
    \end{equation*}
and thus 
    \begin{equation*}
         \gamma^k \leq \left( \dfrac{r}{R} \right)^{\alpha}.
    \end{equation*}
    It follows that, upon fixing $\Gamma := \gamma^{-1}$,
    \begin{equation*}
        \omega_k : = \gamma^{k-1} \omega \leq \Gamma \omega \left( \dfrac{r}{R} \right)^{\alpha}.
    \end{equation*}
    As $r < R_k$ and $Q(\theta_1 r^2,r) \subset Q_k$, the claim follows.
\end{proof}

\section{Energy dissipation and asymptotics} \label{stationary-sec}
Problem \eqref{PDE} can be viewed as the time evolution of the energy functional 
\begin{equation} \label{FreeEnergy}
    \F[\rho] = \int_\Omega \rho(\log \rho-1) + \rho V + \dfrac{1}{2} \rho (W * \rho) dx,
\end{equation}
on the admissible set $\{ \rho \in L^1(\Omega): 0 \leq \rho \leq \rho_{\text{max}} \}$. The minimization problem for \eqref{FreeEnergy} and long-time behaviors of solutions to \eqref{PDE} have recently been studied in \cite{carrillo2024aggregation} under the assumptions that $V \in C^2(\overline{\Omega})$ and $W=0$.
Here, we take advantage of the established regularity in the previous section to demonstrate the uniform convergence of such a solution to a stationary limit.



 Upon formally differentiating the free energy \eqref{FreeEnergy}, with respect to time, along smooth solutions of \eqref{PDE}, we obtain 
\begin{align} \label{dEdt}
    \dfrac{d}{dt}\F[\rho] &= \int_\Omega \left(\log \rho  + V + W* \rho \right) \partial_t \rho dx  \nn \\
    &= - \int_\Omega 
    \rho \sigma(\rho) \left| \nabla \left( \log \rho + V + W* \rho \right)  \right|^2 dx  \leq 0,
\end{align}
where we have integrated by parts and applied the no-flux boundary conditions in \eqref{PDE}. It follows that the map $t \mapsto \F[\rho(\cdot,t)]$ is monotone non-increasing. Integrating \eqref{dEdt} in time over the interval $0 < t_0 < t_1$ allows us to define the \emph{dissipation} functional:
\begin{equation} \label{Dissip}
    \D[\rho]: = \int_{t_0}^{t_1}  \int_\Omega 
    \rho \sigma(\rho) \left| \nabla \left( \log \rho + V + W * \rho \right)  \right|^2 dx dt.
\end{equation}
Evidently, the functional \eqref{Dissip} is nonnegative. It is also lower semicontinuous with respect to $L^1$ convergence (\cite[Lemma 3.9]{carrillo2024aggregation}). Moreover, weak solutions to \eqref{PDE} constructed in \cite{carrillo2024aggregation} satisfies the following dissipation inequality (\cite[Sec. 3] {carrillo2024aggregation}) 
\begin{equation*}
     \int_{t_0}^{t_1} \| \partial_t\rho\|^2_{W^{-1,1}(\Omega)} \leq C \Big( \F[\rho(\cdot,t_0)] - \F[\rho(\cdot,t_1)] \Big),
\end{equation*}
for some constant $C>0$ that depends on the data of the problem and an $L^1$-contraction principle (\cite[Sec. 2.1]{carrillo2024aggregation}), i.e., for two admissible initial data $\rho_0$ and $\tilde{\rho}_0$ and corresponding weak solutions $\rho$ and $\tilde{\rho}$, it holds that
\begin{equation*}
    \|\rho(\cdot,t)-\tilde{\rho}(\cdot,t) \|_{L^1(\Omega)} \leq \|\rho_0-\tilde{\rho}_0 \|_{L^1(\Omega)} \ \ \ \text{for} \ \ \ t>0.
\end{equation*}
 With these preliminaries, we proceed to show the following.
\begin{proposition} \label{ConvToSS}
    Suppose $\rho$ is a weak solution to \eqref{PDE} in the sense of \cite{carrillo2024aggregation} and the potentials $V,W \in C^2(\overline{\Omega})$. Then, for any diverging sequence $\{t_n\}_{n \in \N}$, there exists a subsequence $\{t_{j}\}_{j \in \N}$ and a limit $\rho_*$ such that $\rho(\cdot,t_j) \to \rho_*$ uniformly in $\Omega$ as $j \to \infty$. Furthermore, $\rho_*$ is a weak stationary solution.
\end{proposition}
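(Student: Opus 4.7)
The plan is to combine the uniform H\"older estimate of Theorem~\ref{inter-Holder} (globalized up to $\overline{\Omega}\times[0,T]$ via Remark~\ref{bndry}) with the energy-dissipation machinery recalled in Section~\ref{stationary-sec} to produce a uniformly convergent subsequence of time translates and to identify its limit with a weak stationary solution.

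Given a diverging sequence $t_n$, I would set $\rho_n(x,t):=\rho(x,t_n+t)$ for $(x,t)\in\overline{\Omega}\times[0,1]$. Each $\rho_n$ solves \eqref{PDE} on its cylinder, and the parabolic distance from $\overline{\Omega}\times[t_n,t_n+1]$ to the parabolic boundary of $\Omega\times(0,\infty)$ is at least $\min(1,t_n^{1/2})$, which is bounded below uniformly in $n$ for $n$ large. Thus Theorem~\ref{inter-Holder} together with Remark~\ref{bndry} provides a H\"older modulus for $\rho_n$ on $\overline{\Omega}\times[0,1]$ that does not depend on $n$. By Arzel\`a--Ascoli a subsequence $\rho_{n_j}$ converges uniformly on $\overline{\Omega}\times[0,1]$ to some continuous $\rho_\infty$, and in particular the initial slice yields $\rho(\cdot,t_{n_j})=\rho_{n_j}(\cdot,0)\to \rho_*:=\rho_\infty(\cdot,0)$ uniformly on $\overline{\Omega}$.

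To see that $\rho_\infty$ is time-independent and solves the stationary problem, I would use the free energy. Since $0\le\rho\le 1$ and $V,W\in W^{1,\infty}$, the functional $t\mapsto\F[\rho(\cdot,t)]$ is bounded below and, by \eqref{dEdt}, monotone non-increasing, hence converges to some $L\in\R$ as $t\to\infty$. The dissipation bound on $\|\partial_t\rho\|_{W^{-1,1}}$ quoted above then gives
$$\int_0^1\|\partial_t\rho_{n_j}\|_{W^{-1,1}(\Omega)}^2\,dt\le C\bigl(\F[\rho(\cdot,t_{n_j})]-\F[\rho(\cdot,t_{n_j}+1)]\bigr)\longrightarrow 0,$$
so $\partial_t\rho_\infty\equiv 0$ in $\mathcal{D}'(\Omega\times(0,1))$, which forces $\rho_\infty(x,t)\equiv\rho_*(x)$.

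It remains to pass to the limit in the weak formulation to conclude that $\rho_*$ is a weak stationary solution. Testing \eqref{weak_form} for $\rho_{n_j}$ against a time-independent $\varphi\in C^\infty(\overline{\Omega})$ and integrating in $t\in(0,1)$, the zero-order, drift, and interaction terms pass to the limit from the uniform convergence $\rho_{n_j}\to\rho_*$, continuity of $\sigma$, and the smoothing property of convolution with $\nabla W$. The main obstacle is the principal part $\sigma(\rho_{n_j})\nabla\rho_{n_j}=\nabla\Sigma(\rho_{n_j})$, which is only weakly precompact in $L^2(0,1;L^2(\Omega))$ by the energy estimate of Proposition~\ref{energy-estim-prop}; uniform convergence of $\rho_{n_j}$ together with continuity of $\Sigma$ yields $\Sigma(\rho_{n_j})\to\Sigma(\rho_*)$ uniformly, which identifies the weak limit of $\nabla\Sigma(\rho_{n_j})$ as $\nabla\Sigma(\rho_*)\in L^2$. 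Passing to the limit then produces the stationary weak formulation for $\rho_*$, completing the proof.
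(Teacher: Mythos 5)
Your argument is correct and its first two stages coincide with the paper's: translate in time, invoke the uniform H\"older modulus from \Cref{inter-Holder} (globalized via \Cref{bndry}), extract a uniformly convergent subsequence by Arzel\`a--Ascoli, and kill the time dependence of the limit using the $W^{-1,1}$ dissipation bound together with the convergence of the monotone, bounded-below energy $t\mapsto\F[\rho(\cdot,t)]$. Where you diverge is in identifying the limit as a weak stationary solution: the paper avoids passing to the limit in the weak formulation altogether and instead exploits the variational structure, writing $0\le\D[\rho_*]\le\liminf_j\D[\rho_j]=\lim_j\bigl(\F[\rho(\cdot,t_j)]-\F[\rho(\cdot,t_j+1)]\bigr)=0$ via the $L^1$-lower semicontinuity of the dissipation functional, so that $\rho_*$ has vanishing dissipation (hence vanishing flux, a pointwise and slightly stronger conclusion than a divergence-free flux). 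Your route --- testing with time-independent $\varphi$ and identifying the weak $L^2$ limit of $\nabla\Sigma(\rho_{n_j})$ with $\nabla\Sigma(\rho_*)$ through the uniform convergence $\Sigma(\rho_{n_j})\to\Sigma(\rho_*)$ --- is the more standard PDE compactness argument; it works, but it requires two small repairs: (i) \Cref{weak_sol_def} demands $\varphi(\cdot,T)=0$, so you should test with $\varphi(x)\chi(t)$, $\chi\in C_c^\infty(0,1)$, and let the time term $\iint\rho_{n_j}\varphi\chi'\to\int_\Omega\rho_*\varphi\,dx\int_0^1\chi'\,dt=0$; and (ii) the weak $L^2$ precompactness of $\nabla\Sigma(\rho_{n_j})$ needs an energy bound that is both uniform in $n$ and valid up to $\partial\Omega$, which follows from the boundary version of \Cref{energy-estim-prop} (or directly from the uniform bound on the dissipation) rather than from the interior estimate alone. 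The paper's lower-semicontinuity argument buys brevity and a stronger characterization of $\rho_*$; yours is more self-contained in that it does not rely on the cited semicontinuity lemma for $\D$.
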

 \begin{proof}
   By \Cref{bndry}, we may work in $\Omega$. Define $\rho_n(x,t):= \rho(x,t_n+t)$ in $\Omega \times [0,1]$. By \Cref{inter-Holder}, for any $x_1, x_2 \in \Omega$ and  $\tau_1,\tau_2 \in [0,1] $, we have
     \begin{align*}
         |\rho_n(x_1,\tau_1) - \rho_n(x_2,\tau_2)| &= |\rho(x_1,t_n+\tau_1) - \rho(x_2,t_n+\tau_2)| \\
         &\leq C \left( |x_1-x_2| + |\tau_1-\tau_2| \right)^{ \frac{\alpha}{2}},
     \end{align*}
     for some $C>0$ independent of $n$ and the choices of $(x_1,\tau_1)$ and $(x_2,\tau_2)$, showing that the sequence $\{ \rho_n \}_{n \in \mathbb{N} }$ is uniformly equicontinuous with respect to $n$. Applying Arzel\`a-Ascoli Theorem, we find a subsequence, denoted by $\{ \rho_j \}_{j \in \N}$, and a continuous function $\rho_*(x,t)$ such that as $j \to \infty$,
    \begin{equation} \label{unif-conv}
        \rho_{j} \to \rho_* \ \ \ \text{uniformly in } \Omega \times [0,1].
    \end{equation}
Since 
$$\lim_{t\to\infty} \int_0^1 \| \partial_t\rho_j(x,t)\|^2_{W^{-1,1}(\Omega)} = 0,$$
we may conclude that $\rho_*$ is stationary, i.e.,  $\rho_*(x,t) = \rho_*(x)$. Finally, using the nonnegativity and lower semicontinuity of $\D[\cdot]$, as well as the monotonicity and decay of $\F[\cdot]$ in time to a stationary value, we obtain
\begin{align*}
    0 \leq  \D[\rho_*(x)] & \leq \liminf_{j \to \infty} \D[\rho_j] \\
    &=  \lim_{j \to \infty} \inf_{k \geq j}\Big( \F[\rho(\cdot,t_k)] - \F[\rho(\cdot,t_k+1)]  \Big) =0.
\end{align*}
It follows that $\D[\rho_*] = 0$ and hence $\rho_*$ is a weak stationary solution of \eqref{PDE}. 
\end{proof}

In view of the preceding proposition, we conclude with the proof of \Cref{conv-thm}.
\begin{proof}[Proof of Theorem \ref{conv-thm}] 
    Since $W=0$, the weak solution $\rho$ is global-in-time and $L^1$-contractive; hence, it makes sense to speak of a stationary state. By \Cref{ConvToSS}, for an arbitrary sequence $\{t_n\}_{n \in \N} \to \infty$, we may find a diverging subsequence $\{t_j \}_{j \in \N}$ along which $\rho_j \to \rho_\infty$ uniformly in $\Omega$, $\rho_\infty$ being a stationary-in-time and $L^1$-contractive (\cite[Theorem 2.8]{carrillo2024aggregation}) weak solution for $\eqref{PDE}$. Thus, the limit is unique, and we immediately have
 \begin{equation*}
        \lim_{t\to \infty} \| \rho(\cdot,t) - \rho_\infty \|_{L^\infty(\Omega)} = 0.
    \end{equation*}
\end{proof}

 \subsection*{Acknowledgements}
This work is part of the author's PhD thesis, supported by funding from King Abdullah University of Science and Technology (KAUST). The author thanks his thesis advisor, Professor Jos\'e Miguel Urbano, for proposing the problem, introducing him to the intrinsic scaling method, and providing valuable feedback. Thanks are also extended to Professor José Antonio Carrillo for bringing the topic to our attention and the referees for their constructive comments and helpful suggestions.

\bibliographystyle{acm}
\bibliography{refs}

\begin{thebibliography}{10}

\bibitem{bailo2024aggregation}
{\sc Bailo, R., Carrillo, J.~A., and G{\'o}mez-Castro, D.}
\newblock Aggregation-diffusion equations for collective behaviour in the sciences.
\newblock {\em arXiv preprint arXiv:2405.16679\/} (2024).

\bibitem{bailo2023boundpreserving}
{\sc Bailo, R., Carrillo, J.~A., and Hu, J.}
\newblock Bound-preserving finite-volume schemes for systems of continuity equations with saturation.
\newblock {\em SIAM J. Appl. Math. 83}, 3 (2023), 1315--1339.

\bibitem{burger2006keller}
{\sc Burger, M., Di~Francesco, M., and Dolak-Struss, Y.}
\newblock The {K}eller--{S}egel model for chemotaxis with prevention of overcrowding: {L}inear vs. nonlinear diffusion.
\newblock {\em SIAM Journal on Mathematical Analysis 38}, 4 (2006), 1288--1315.

\bibitem{carrillo2019aggregation}
{\sc Carrillo, J.~A., Craig, K., and Yao, Y.}
\newblock Aggregation-diffusion equations: {D}ynamics, asymptotics, and singular limits.
\newblock {\em Active Particles, Volume 2: Advances in Theory, Models, and Applications\/} (2019), 65--108.

\bibitem{carrillo2024aggregation}
{\sc Carrillo, J.~A., Fern{\'a}ndez-Jim{\'e}nez, A., and G{\'o}mez-Castro, D.}
\newblock Aggregation-diffusion equations with saturation.
\newblock {\em arXiv preprint arXiv:2410.10040\/} (2024).

\bibitem{carrillo2020long}
{\sc Carrillo, J.~A., Gvalani, R., Pavliotis, G., and Schlichting, A.}
\newblock Long-time behaviour and phase transitions for the {M}ckean--{V}lasov equation on the torus.
\newblock {\em Archive for Rational Mechanics and Analysis 235}, 1 (2020), 635--690.

\bibitem{carrillo2021phase}
{\sc Carrillo, J.~A., and Gvalani, R.~S.}
\newblock Phase transitions for nonlinear nonlocal aggregation-diffusion equations.
\newblock {\em Communications in Mathematical Physics 382}, 1 (2021), 485--545.

\bibitem{carrillo2001entropy}
{\sc Carrillo, J.~A., J{\"u}ngel, A., Markowich, P.~A., Toscani, G., and Unterreiter, A.}
\newblock Entropy dissipation methods for degenerate parabolic problems and generalized {S}obolev inequalities.
\newblock {\em Monatshefte f{\"u}r Mathematik 133\/} (2001), 1--82.

\bibitem{carrillo2010nonlinear}
{\sc Carrillo, J.~A., Lisini, S., Savar{\'e}, G., and Slep{\v{c}}ev, D.}
\newblock Nonlinear mobility continuity equations and generalized displacement convexity.
\newblock {\em Journal of Functional Analysis 258}, 4 (2010), 1273--1309.

\bibitem{carrillo2024structure}
{\sc Carrillo, J.~A., Wang, L., and Wei, C.}
\newblock Structure preserving primal dual methods for gradient flows with nonlinear mobility transport distances.
\newblock {\em SIAM Journal on Numerical Analysis 62}, 1 (2024), 376--399.

\bibitem{chayes2013aggregation}
{\sc Chayes, L., Kim, I., and Yao, Y.}
\newblock An aggregation equation with degenerate diffusion: Qualitative property of solutions.
\newblock {\em SIAM Journal on Mathematical Analysis 45}, 5 (2013), 2995--3018.

\bibitem{chayes2010mckean}
{\sc Chayes, L., and Panferov, V.}
\newblock The {M}ckean--{V}lasov equation in finite volume.
\newblock {\em Journal of Statistical Physics 138}, 1 (2010), 351--380.

\bibitem{de1957sulla}
{\sc De~Giorgi, E.}
\newblock Sulla differenziabilit\`a{} e l'analiticit\`a{} delle estremali degli integrali multipli regolari.
\newblock {\em Memorie della Accademia delle Scienze di Torino. Classe di scienze matematiche fisiche e natural 3\/} (1957), 25--43.

\bibitem{di2019deterministic}
{\sc Di~Francesco, M., Fagioli, S., and Radici, E.}
\newblock Deterministic particle approximation for nonlocal transport equations with nonlinear mobility.
\newblock {\em Journal of Differential Equations 266}, 5 (2019), 2830--2868.

\bibitem{di1979regularity}
{\sc DiBenedetto, E.}
\newblock Regularity results for the porous media equation.
\newblock {\em Annali di Matematica Pura ed Applicata 121\/} (1979), 249--262.

\bibitem{di1986local}
{\sc DiBenedetto, E.}
\newblock On the local behaviour of solutions of degenerate parabolic equations with measurable coefficients.
\newblock {\em Annali della Scuola Normale Superiore di Pisa-Classe di Scienze 13}, 3 (1986), 487--535.

\bibitem{dibenedetto1993degenerate}
{\sc DiBenedetto, E.}
\newblock {\em Degenerate parabolic equations}.
\newblock Universitext. Springer-Verlag, New York, 1993.

\bibitem{dibenedetto2012harnack}
{\sc DiBenedetto, E., Gianazza, U., and Vespri, V.}
\newblock {\em Harnack's inequality for degenerate and singular parabolic equations}.
\newblock Springer, 2012.

\bibitem{dolbeault2009new}
{\sc Dolbeault, J., Nazaret, B., and Savar{\'e}, G.}
\newblock A new class of transport distances between measures.
\newblock {\em Calculus of Variations and Partial Differential Equations 34}, 2 (2009), 193--231.

\bibitem{fagioli2018solutions}
{\sc Fagioli, S., and Radici, E.}
\newblock Solutions to aggregation--diffusion equations with nonlinear mobility constructed via a deterministic particle approximation.
\newblock {\em Mathematical Models and Methods in Applied Sciences 28}, 09 (2018), 1801--1829.

\bibitem{fagioli2022gradient}
{\sc Fagioli, S., and Tse, O.}
\newblock On gradient flow and entropy solutions for nonlocal transport equations with nonlinear mobility.
\newblock {\em Nonlinear Analysis 221\/} (2022), 112904.

\bibitem{giacomin1997phase}
{\sc Giacomin, G., and Lebowitz, J.~L.}
\newblock Phase segregation dynamics in particle systems with long range interactions. {I}. {M}acroscopic limits.
\newblock {\em Journal of Statistical Physics 87}, 1-2 (1997), 37--61.

\bibitem{gomez2025convergence}
{\sc G{\'o}mez-Castro, D.}
\newblock Convergence of a finite-volume scheme for aggregation-diffusion equations with saturation.
\newblock {\em arXiv preprint arXiv:2507.11132\/} (2025).

\bibitem{hillen2001global}
{\sc Hillen, T., and Painter, K.}
\newblock Global existence for a parabolic chemotaxis model with prevention of overcrowding.
\newblock {\em Advances in Applied Mathematics 26}, 4 (2001), 280--301.

\bibitem{jordan1998variational}
{\sc Jordan, R., Kinderlehrer, D., and Otto, F.}
\newblock The variational formulation of the {F}okker-{P}lanck equation.
\newblock {\em SIAM Journal on Mathematical Analysis 29}, 1 (1998), 1--17.

\bibitem{lisini2010class}
{\sc Lisini, S., and Marigonda, A.}
\newblock On a class of modified {W}asserstein distances induced by concave mobility functions defined on bounded intervals.
\newblock {\em Manuscripta Mathematica 133}, 1 (2010), 197--224.

\bibitem{otto2001geometry}
{\sc Otto, F.}
\newblock The geometry of dissipative evolution equations: {T}he porous medium equation.
\newblock {\em Communications in Partial Differential Equations 26}, 1-2 (2001), 101--174.

\bibitem{painter2002volume}
{\sc Painter, K.~J., and Hillen, T.}
\newblock Volume-filling and quorum-sensing in models for chemosensitive movement.
\newblock {\em Canadian Applied Mathematics Quarterly 10}, 4 (2002), 501--543.

\bibitem{porzio1993holder}
{\sc Porzio, M.~M., and Vespri, V.}
\newblock H\"older estimates for local solutions of some doubly nonlinear degenerate parabolic equations.
\newblock {\em Journal of Differential Equations 103}, 1 (1993), 146--178.

\bibitem{urbano2001holder}
{\sc Urbano, J.~M.}
\newblock H{\"o}lder continuity of local weak solutions for parabolic equations exhibiting two degeneracies.
\newblock {\em Advances in Differential Equations 6}, 3 (2001), 327--358.

\bibitem{urbano1930method}
{\sc Urbano, J.~M.}
\newblock {\em The method of intrinsic scaling}, vol.~1930 of {\em Lecture Notes in Mathematics}.
\newblock Springer-Verlag, Berlin, 2008.
\newblock A systematic approach to regularity for degenerate and singular PDEs.

\bibitem{wu2001nonlinear}
{\sc Wu, Z., Zhao, J., Yin, J., and Li, H.}
\newblock {\em Nonlinear diffusion equations}.
\newblock World Scientific, 2001.

\bibitem{Zamponi2017}
{\sc Zamponi, N., and J\"ungel, A.}
\newblock Analysis of degenerate cross-diffusion population models with volume filling.
\newblock {\em Annales de l'Institut Henri Poincar\'e{} C. Analyse Non Lin\'eaire 34}, 1 (2017), 1--29.

\end{thebibliography}

\end{document}